\documentclass[11pt,twoside,a4paper]{article}
\usepackage{amscd}
\usepackage{amsmath}
\usepackage{amssymb}
\usepackage{color}
\usepackage{latexsym}
\usepackage{stmaryrd}
\usepackage[latin1]{inputenc}
\usepackage[T1]{fontenc}   
\usepackage[english]{babel}
\setlength{\textwidth}{16cm}
\setlength{\textheight}{25cm}
\topmargin = -25mm
\oddsidemargin = -1mm
\evensidemargin = 0mm


\newcommand{\tun}{\begin{picture}(5,0)(-2,-1)
\put(0,0){\circle*{2}}
\end{picture}}

\newcommand{\tdeux}{\begin{picture}(7,7)(0,-1)
\put(3,0){\circle*{2}}
\put(3,5){\circle*{2}}
\put(3,0){\line(0,1){5}}
\end{picture}}

\newcommand{\ttroisun}{\begin{picture}(15,12)(-5,-1)
\put(3,0){\circle*{2}}
\put(6,7){\circle*{2}}
\put(0,7){\circle*{2}}
\put(-0.65,0){$\vee$}
\end{picture}}

\newcommand{\ttroisdeux}{\begin{picture}(5,15)(-2,-1)
\put(0,0){\circle*{2}}
\put(0,5){\circle*{2}}
\put(0,10){\circle*{2}}
\put(0,0){\line(0,1){5}}
\put(0,5){\line(0,1){5}}
\end{picture}}

\newcommand{\tquatreun}{\begin{picture}(15,12)(-5,-1)
\put(3,0){\circle*{2}}
\put(6,7){\circle*{2}}
\put(0,7){\circle*{2}}
\put(3,7){\circle*{2}}
\put(-0.65,0){$\vee$}
\put(3,0){\line(0,1){7}}
\end{picture}}

\newcommand{\tquatredeux}{\begin{picture}(15,18)(-5,-1)
\put(3,0){\circle*{2}}
\put(6,7){\circle*{2}}
\put(0,7){\circle*{2}}
\put(0,14){\circle*{2}}
\put(-0.65,0){$\vee$}
\put(0,7){\line(0,1){7}}
\end{picture}}

\newcommand{\tquatretrois}{\begin{picture}(15,18)(-5,-1)
\put(3,0){\circle*{2}}
\put(6,7){\circle*{2}}
\put(0,7){\circle*{2}}
\put(6,14){\circle*{2}}
\put(-0.65,0){$\vee$}
\put(6,7){\line(0,1){7}}
\end{picture}}

\newcommand{\tquatrequatre}{\begin{picture}(15,18)(-5,-1)
\put(3,5){\circle*{2}}
\put(6,12){\circle*{2}}
\put(0,12){\circle*{2}}
\put(3,0){\circle*{2}}
\put(-0.65,5){$\vee$}
\put(3,0){\line(0,1){5}}
\end{picture}}

\newcommand{\tquatrecinq}{\begin{picture}(9,19)(-2,-1)
\put(0,0){\circle*{2}}
\put(0,5){\circle*{2}}
\put(0,10){\circle*{2}}
\put(0,15){\circle*{2}}
\put(0,0){\line(0,1){5}}
\put(0,5){\line(0,1){5}}
\put(0,10){\line(0,1){5}}
\end{picture}}

\newcommand{\tcinqun}{\begin{picture}(20,8)(-5,-1)
\put(3,0){\circle*{2}}
\put(-7,5){\circle*{2}}
\put(13,5){\circle*{2}}
\put(6,7){\circle*{2}}
\put(0,7){\circle*{2}}
\put(-0.5,0){$\vee$}
\put(3,0){\line(2,1){10}}
\put(3,0){\line(-2,1){10}}
\end{picture}}

\newcommand{\tcinqdeux}{\begin{picture}(15,14)(-5,-1)
\put(3,0){\circle*{2}}
\put(0,14){\circle*{2}}
\put(6,7){\circle*{2}}
\put(0,7){\circle*{2}}
\put(3,7){\circle*{2}}
\put(-0.65,0){$\vee$}
\put(3,0){\line(0,1){7}}
\put(0,7){\line(0,1){7}}
\end{picture}}

\newcommand{\tcinqcinq}{\begin{picture}(15,19)(-5,-1)
\put(3,0){\circle*{2}}
\put(6,7){\circle*{2}}
\put(0,7){\circle*{2}}
\put(6,14){\circle*{2}}
\put(0,14){\circle*{2}}
\put(-0.65,0){$\vee$}
\put(6,7){\line(0,1){7}}
\put(0,7){\line(0,1){7}}
\end{picture}}

\newcommand{\tcinqsix}{\begin{picture}(15,20)(-7,-1)
\put(3,0){\circle*{2}}
\put(6,7){\circle*{2}}
\put(0,7){\circle*{2}}
\put(3,14){\circle*{2}}
\put(-3,14){\circle*{2}}
\put(-0.65,0){$\vee$}
\put(-3.65,7){$\vee$}
\end{picture}}

\newcommand{\tcinqhuit}{\begin{picture}(15,26)(-5,-1)
\put(3,0){\circle*{2}}
\put(6,7){\circle*{2}}
\put(0,7){\circle*{2}}
\put(0,14){\circle*{2}}
\put(0,21){\circle*{2}}
\put(-0.65,0){$\vee$}
\put(0,7){\line(0,1){7}}
\put(0,14){\line(0,1){7}}
\end{picture}}

\newcommand{\tcinqdix}{\begin{picture}(15,19)(-5,-1)
\put(3,5){\circle*{2}}
\put(6,12){\circle*{2}}
\put(0,12){\circle*{2}}
\put(3,0){\circle*{2}}
\put(3,12){\circle*{2}}
\put(-0.5,5){$\vee$}
\put(3,0){\line(0,1){12}}
\end{picture}}

\newcommand{\tcinqonze}{\begin{picture}(15,26)(-5,-1)
\put(3,5){\circle*{2}}
\put(6,12){\circle*{2}}
\put(0,12){\circle*{2}}
\put(3,0){\circle*{2}}
\put(0,19){\circle*{2}}
\put(-0.65,5){$\vee$}
\put(3,0){\line(0,1){5}}
\put(0,12){\line(0,1){7}}
\end{picture}}

\newcommand{\tcinqtreize}{\begin{picture}(5,26)(-2,-1)
\put(0,0){\circle*{2}}
\put(0,7){\circle*{2}}
\put(0,0){\line(0,1){7}}
\put(0,14){\circle*{2}}
\put(-3,21){\circle*{2}}
\put(3,21){\circle*{2}}
\put(-3.65,14){$\vee$}
\put(0,7){\line(0,1){7}}
\end{picture}}

\newcommand{\tcinqquatorze}{\begin{picture}(9,26)(-5,-1)
\put(0,0){\circle*{2}}
\put(0,5){\circle*{2}}
\put(0,10){\circle*{2}}
\put(0,15){\circle*{2}}
\put(0,20){\circle*{2}}
\put(0,0){\line(0,1){5}}
\put(0,5){\line(0,1){5}}
\put(0,10){\line(0,1){5}}
\put(0,15){\line(0,1){5}}
\end{picture}}


\newcommand{\tdun}[1]{\begin{picture}(10,5)(-2,-1)
\put(0,0){\circle*{2}}
\put(3,-2){\tiny #1}
\end{picture}}

\newcommand{\tddeux}[2]{\begin{picture}(12,5)(0,-1)
\put(3,0){\circle*{2}}
\put(3,5){\circle*{2}}
\put(3,0){\line(0,1){5}}
\put(6,-2){\tiny #1}
\put(6,3){\tiny #2}
\end{picture}}

\newcommand{\tdtroisun}[3]{\begin{picture}(20,12)(-5,-1)
\put(3,0){\circle*{2}}
\put(6,7){\circle*{2}}
\put(0,7){\circle*{2}}
\put(-0.65,0){$\vee$}
\put(5,-2){\tiny #1}
\put(9,5){\tiny #2}
\put(-5,5){\tiny #3}
\end{picture}}

\newcommand{\tdtroisdeux}[3]{\begin{picture}(12,15)(-2,-1)
\put(0,0){\circle*{2}}
\put(0,5){\circle*{2}}
\put(0,10){\circle*{2}}
\put(0,0){\line(0,1){5}}
\put(0,5){\line(0,1){5}}
\put(3,-2){\tiny #1}
\put(3,3){\tiny #2}
\put(3,9){\tiny #3}
\end{picture}}

\newcommand{\tdquatreun}[4]{\begin{picture}(20,12)(-5,-1)
\put(3,0){\circle*{2}}
\put(6,7){\circle*{2}}
\put(0,7){\circle*{2}}
\put(3,7){\circle*{2}}
\put(-0.6,0){$\vee$}
\put(3,0){\line(0,1){7}}
\put(5,-2){\tiny #1}
\put(8.5,5){\tiny #2}
\put(1,10){\tiny #3}
\put(-5,5){\tiny #4}
\end{picture}}

\newcommand{\tdquatredeux}[4]{\begin{picture}(20,20)(-5,-1)
\put(3,0){\circle*{2}}
\put(6,7){\circle*{2}}
\put(0,7){\circle*{2}}
\put(0,14){\circle*{2}}
\put(-.65,0){$\vee$}
\put(0,7){\line(0,1){7}}
\put(5,-2){\tiny #1}
\put(9,5){\tiny #2}
\put(-5,5){\tiny #3}
\put(-5,12){\tiny #4}
\end{picture}}

\newcommand{\tdquatretrois}[4]{\begin{picture}(20,20)(-5,-1)
\put(3,0){\circle*{2}}
\put(6,7){\circle*{2}}
\put(0,7){\circle*{2}}
\put(6,14){\circle*{2}}
\put(-.65,0){$\vee$}
\put(6,7){\line(0,1){7}}
\put(5,-2){\tiny #1}
\put(9,5){\tiny #2}
\put(-5,5){\tiny #4}
\put(9,12){\tiny #3}
\end{picture}}

\newcommand{\tdquatrequatre}[4]{\begin{picture}(20,14)(-5,-1)
\put(3,5){\circle*{2}}
\put(6,12){\circle*{2}}
\put(0,12){\circle*{2}}
\put(3,0){\circle*{2}}
\put(-.65,5){$\vee$}
\put(3,0){\line(0,1){5}}
\put(6,-3){\tiny #1}
\put(6,4){\tiny #2}
\put(9,12){\tiny #3}
\put(-5,12){\tiny #4}
\end{picture}}

\newcommand{\tdquatrecinq}[4]{\begin{picture}(12,19)(-2,-1)
\put(0,0){\circle*{2}}
\put(0,5){\circle*{2}}
\put(0,10){\circle*{2}}
\put(0,15){\circle*{2}}
\put(0,0){\line(0,1){5}}
\put(0,5){\line(0,1){5}}
\put(0,10){\line(0,1){5}}
\put(3,-2){\tiny #1}
\put(3,3){\tiny #2}
\put(3,9){\tiny #3}
\put(3,14){\tiny #4}
\end{picture}}

\newcommand{\hdeux}{\begin{picture}(12,8)(-3,-1)
\textcolor{red}{\put(0,0){\circle*{2}}
\put(5,0){\circle*{2}}
\put(0,0){\line(1,0){5}}}
\end{picture}}

\newcommand{\htroisun}{\begin{picture}(12,8)(-3,-1)
\put(3,0){\circle*{2}}
\put(-0.6,0.1){$\vee$}
\textcolor{red}{\put(6,7){\circle*{2}}
\put(0,7){\circle*{2}}
\put(0,7){\line(1,0){5}}}
\end{picture}}

\newcommand{\htroisdeux}{\begin{picture}(12,8)(-3,-1)
\put(0,5){\circle*{2}}
\put(0,0){\line(0,1){5}}
\textcolor{red}{\put(0,0){\circle*{2}}
\put(5,0){\circle*{2}}
\put(0,0){\line(1,0){5}}}
\end{picture}}

\newcommand{\htroistrois}{\begin{picture}(12,8)(-3,-1)
\put(5,5){\circle*{2}}
\put(5,0){\line(0,1){5}}
\textcolor{red}{\put(5,0){\circle*{2}}
\put(0,0){\circle*{2}}
\put(0,0){\line(1,0){5}}}
\end{picture}}

\newcommand{\htroisquatre}{\begin{picture}(17,0)(-3,-1)
\textcolor{red}{\put(0,0){\circle*{2}}
\put(5,0){\circle*{2}}
\put(10,0){\circle*{2}}
\put(0,0){\line(1,0){10}}}
\end{picture}}

\newcommand{\hquatreun}{\begin{picture}(12,12)(-3,-1)
\put(3,0){\circle*{2}}
\put(6,7){\circle*{2}}
\put(-0.5,0.1){$\vee$}
\put(3,0){\line(0,1){7}}
\textcolor{red}{\put(0,7){\line(1,0){2}}
\put(0,7){\circle*{2}}
\put(3,7){\circle*{2}}}
\end{picture}}

\newcommand{\hquatredeux}{\begin{picture}(12,12)(-3,-1)
\put(3,0){\circle*{2}}
\put(0,7){\circle*{2}}
\put(-0.5,0.1){$\vee$}
\put(3,0){\line(0,1){7}}
\textcolor{red}{\put(3,7){\line(1,0){2}}
\put(6,7){\circle*{2}}
\put(3,7){\circle*{2}}}
\end{picture}}

\newcommand{\hquatretrois}{\begin{picture}(12,12)(-3,-1)
\put(3,0){\circle*{2}}
\put(-0.5,0.1){$\vee$}
\put(3,0){\line(0,1){7}}
\textcolor{red}{\put(0,7){\line(1,0){5}}
\put(6,7){\circle*{2}}
\put(0,7){\circle*{2}}
\put(3,7){\circle*{2}}}
\end{picture}}

\newcommand{\hquatrequatre}{\begin{picture}(12,18)(-3,-1)
\put(3,0){\circle*{2}}
\put(0,14){\circle*{2}}
\put(-0.5,0.1){$\vee$}
\put(0,7){\line(0,1){7}}
\textcolor{red}{\put(0,7){\line(1,0){5}}
\put(6,7){\circle*{2}}
\put(0,7){\circle*{2}}}
\end{picture}}

\newcommand{\hquatrecinq}{\begin{picture}(12,18)(-3,-1)
\put(3,0){\circle*{2}}
\put(6,14){\circle*{2}}
\put(-0.5,0.1){$\vee$}
\put(6,7){\line(0,1){7}}
\textcolor{red}{\put(0,7){\line(1,0){5}}
\put(6,7){\circle*{2}}
\put(0,7){\circle*{2}}}
\end{picture}}

\newcommand{\hquatresix}{\begin{picture}(12,18)(-3,-1)
\put(3,5){\circle*{2}}
\put(3,0){\circle*{2}}
\put(-0.5,5.1){$\vee$}
\put(3,0){\line(0,1){5}}
\textcolor{red}{\put(0,12){\line(1,0){5}}
\put(6,12){\circle*{2}}
\put(0,12){\circle*{2}}}
\end{picture}}

\newcommand{\hquatresept}{\begin{picture}(18,8)(-5,-1)
\put(6,7){\circle*{2}}
\put(0,7){\circle*{2}}
\put(-0.6,0.1){$\vee$}
\textcolor{red}{\put(3,0){\line(1,0){5}}
\put(8,0){\circle*{2}}
\put(3,0){\circle*{2}}}
\end{picture}}

\newcommand{\hquatrehuit}{\begin{picture}(18,8)(-5,-1)
\put(9,7){\circle*{2}}
\put(3,7){\circle*{2}}
\put(2.4,0.1){$\vee$}
\textcolor{red}{\put(0,0){\line(1,0){5}}
\put(0,0){\circle*{2}}
\put(6,0){\circle*{2}}}
\end{picture}}

\newcommand{\hquatreneuf}{\begin{picture}(12,12)(-3,-1)
\put(0,10){\circle*{2}}
\put(0,5){\circle*{2}}
\put(0,0){\line(0,1){5}}
\put(0,5){\line(0,1){5}}
\textcolor{red}{\put(0,0){\line(1,0){5}}
\put(0,0){\circle*{2}}
\put(5,0){\circle*{2}}}
\end{picture}}

\newcommand{\hquatredix}{\begin{picture}(12,12)(-3,-1)
\put(5,5){\circle*{2}}
\put(5,10){\circle*{2}}
\put(5,0){\line(0,1){5}}
\put(5,5){\line(0,1){5}}
\textcolor{red}{\put(0,0){\line(1,0){5}}
\put(0,0){\circle*{2}}
\put(5,0){\circle*{2}}}
\end{picture}}

\newcommand{\hquatreonze}{\begin{picture}(18,8)(-5,-1)
\put(-0.6,0.1){$\vee$}
\textcolor{red}{\put(3,0){\line(1,0){5}}
\put(3,0){\circle*{2}}
\put(8,0){\circle*{2}}}
\textcolor{blue}{\put(-3.5,7){\line(1,0){5}}
\put(2.5,7){\circle*{2}}
\put(-3.5,7){\circle*{2}}}
\end{picture}}

\newcommand{\hquatredouze}{\begin{picture}(12,8)(-3,-1)
\put(2.4,0.1){$\vee$}
\textcolor{red}{\put(0,0){\line(1,0){5}}
\put(6,0){\circle*{2}}
\put(0,0){\circle*{2}}}
\textcolor{blue}{\put(-.5,7){\line(1,0){5}}
\put(5.5,7){\circle*{2}}
\put(-.5,7){\circle*{2}}}
\end{picture}}

\newcommand{\hquatretreize}{\begin{picture}(12,8)(-3,-1)
\put(0,5){\circle*{2}}
\put(5,5){\circle*{2}}
\put(5,0){\line(0,1){5}}
\put(0,0){\line(0,1){5}}
\textcolor{red}{\put(0,0){\line(1,0){5}}
\put(0,0){\circle*{2}}
\put(5,0){\circle*{2}}}
\end{picture}}

\newcommand{\hquatrequatorze}{\begin{picture}(17,8)(-3,-1)
\put(0,5){\circle*{2}}
\put(0,0){\line(0,1){5}}
\textcolor{red}{\put(0,0){\line(1,0){10}}
\put(0,0){\circle*{2}}
\put(5,0){\circle*{2}}
\put(10,0){\circle*{2}}}
\end{picture}}

\newcommand{\hquatrequinze}{\begin{picture}(17,8)(-2,-1)
\put(5,5){\circle*{2}}
\put(5,0){\line(0,1){5}}
\textcolor{red}{\put(0,0){\line(1,0){10}}
\put(0,0){\circle*{2}}
\put(5,0){\circle*{2}}
\put(10,0){\circle*{2}}}
\end{picture}}

\newcommand{\hquatreseize}{\begin{picture}(17,8)(-3,-1)
\put(10,5){\circle*{2}}
\put(10,0){\line(0,1){5}}
\textcolor{red}{\put(0,0){\line(1,0){10}}
\put(0,0){\circle*{2}}
\put(5,0){\circle*{2}}
\put(10,0){\circle*{2}}}
\end{picture}}

\newcommand{\hquatredixsept}{\begin{picture}(21,8)(-3,-1)
\textcolor{red}{\put(0,0){\circle*{2}}
\put(5,0){\circle*{2}}
\put(10,0){\circle*{2}}
\put(15,0){\circle*{2}}
\put(0,0){\line(1,0){15}}}
\end{picture}}

\newcommand{\hddeux}[2]{\begin{picture}(16,8)(-5,-1)
\textcolor{red}{\put(0,0){\circle*{2}}
\put(5,0){\circle*{2}}
\put(0,0){\line(1,0){5}}}
\put(-5,-2){\tiny #1}
\put(7,-2){\tiny #2}
\end{picture}}

\newcommand{\hdtroisdeux}[3]{\begin{picture}(14,8)(-5,-1)
\put(0,5){\circle*{2}}
\put(0,0){\line(0,1){5}}
\put(-5,-2){\tiny #1}
\put(-5,5){\tiny #2}
\put(7,-2){\tiny #3}
\textcolor{red}{\put(0,0){\circle*{2}}
\put(5,0){\circle*{2}}
\put(0,0){\line(1,0){5}}}
\end{picture}}

\newcommand{\hdquatrequatre}[4]{\begin{picture}(20,18)(-5,-1)
\put(3,0){\circle*{2}}
\put(0,14){\circle*{2}}
\put(-0.5,0.1){$\vee$}
\put(0,7){\line(0,1){7}}
\put(5,-2){\tiny #1}
\put(9,5){\tiny #2}
\put(-5,5){\tiny #3}
\put(-5,12){\tiny #4}
\textcolor{red}{\put(0,7){\line(1,0){5}}
\put(6,7){\circle*{2}}
\put(0,7){\circle*{2}}}
\end{picture}}

\newcommand{\hdquatretreize}[4]{\begin{picture}(17,8)(-5,-1)
\put(0,0){\circle*{2}}
\put(0,5){\circle*{2}}
\put(5,0){\circle*{2}}
\put(5,5){\circle*{2}}
\put(0,0){\line(1,0){5}}
\put(0,0){\line(0,1){5}}
\put(5,0){\line(0,1){5}}
\put(-5,-2){\tiny #1}
\put(-5,3){\tiny #2}
\put(7,-2){\tiny #3}
\put(7,3){\tiny #4}
\end{picture}}

\input{xy}
\xyoption{all}

\title{A pre-Lie algebra associated to a linear endomorphism and related algebraic structures}
\date{}
\author{Lo\"\i c Foissy\\ \\
{\small \it Fédération de Recherche Mathématique du Nord Pas de Calais FR 2956}\\
{\small \it Laboratoire de Mathématiques Pures et Appliquées Joseph Liouville}\\
{\small \it Université du Littoral Côte dOpale-Centre Universitaire de la Mi-Voix}\\ 
{\small \it 50, rue Ferdinand Buisson, CS 80699,  62228 Calais Cedex, France}\\ \\
{\small \it email: foissy@lmpa.univ-littoral.fr}}

\newtheorem{defi}{\indent Definition}
\newtheorem{lemma}[defi]{\indent Lemma}
\newtheorem{cor}[defi]{\indent Corollary}
\newtheorem{theo}[defi]{\indent Theorem}
\newtheorem{prop}[defi]{\indent Proposition}

\newenvironment{proof}{{\bf Proof.}}{\hfill $\Box$}

\def\shuff#1#2{\mathbin{
      \hbox{\vbox{\hbox{\vrule \hskip#2 \vrule height#1 width 0pt}\hrule}\vbox{\hbox{\vrule \hskip#2 \vrule height#1 width 0pt\vrule }\hrule}}}}
\def\shuffl{{\mathchoice{\shuff{7pt}{3.5pt}}{\shuff{6pt}{3pt}}{\shuff{4pt}{2pt}}{\shuff{3pt}{1.5pt}}}}
\def\shuffle{\, \shuffl \,}

\newcommand{\K}{\mathbb{K}}
\newcommand{\tdelta}{\tilde{\Delta}}
\newcommand{\D}{\mathcal{D}}
\newcommand{\PT}{\mathcal{PT}}
\newcommand{\g}{\mathfrak{g}}
\renewcommand{\S}{\mathfrak{S}}
\newcommand{\tdiam}{\tilde{\diamond}}
\newcommand{\tdiamd}{\tilde{\triangleleft}}
\newcommand{\U}{\mathcal{U}}
\newcommand{\h}{\mathcal{H}}
\newcommand{\W}{\mathcal{W}}

\begin{document}

\maketitle

ABSTRACT. We construct a functor $T$ from the category of endomorphisms of vector spaces to the category of Com-Pre-Lie algebras.
For any endomorphism $f$ of a vector space $V$, we describe the enveloping algebra of the pre-Lie algebra $T(V,f)$, 
the dual Hopf algebra and the associated group of characters. For $f=Id_V$, we find the algebra of formal
diffeomorphisms, seen as a subalgebra of the Connes-Kreimer Hopf algebra of rooted trees in the context of QFT; for other well-chosen nilpotent $f$,
we obtain the groups of Fliess operators in Control Theory. 

An algebraic study of these Com-Pre-Lie Hopf algebras is carried out: gradations, generation, subobject generated by $V$, etc. \\

AMS CLASSIFICATION. 16T05, 17B60, 93B25, 05C05. \\

KEYWORDS. Prelie algebras; Com-Pre-Lie Hopf algebras; Fliess operators; Faà di Bruno Hopf algebra.\\

\tableofcontents

\section*{Introduction}

The Hopf algebra $H$ of coordinates of the group of Fliess operators $G$, used to study the feedback in Control Theory,
is described in \cite{Gray}. This Hopf algebra is studied in the one-dimensional case in \cite{Foissyprelie}, and it is shown it is a 
right-sided Hopf algebra in the sense of \cite{Loday}. Consequently, its dual Lie algebra $\g$, which is the Lie algebra of $G$, inherits 
a right pre-Lie algebra structure. It is proved that $\g$ is in fact a Hopf Com-Pre-Lie algebra, that is to say:
\begin{enumerate}
\item $\g$ has a commutative product $\shuffle$ and a coproduct $\Delta$, making it a commutative Hopf algebra.
\item $\g$ has a nonassociative product $\bullet$ satisfying the (right) pre-Lie axiom: for all $a,b,c\in \g$,
$$a\bullet (b\bullet c)-(a\bullet b)\bullet c=a\bullet (c\bullet b)-(a\bullet c)\bullet b.$$
The Lie bracket  of $\g$ is the antisymmetrization of $\bullet$.
\item For all $a,b,c \in\g$, $(a\bullet b)\bullet c-a\bullet (b\bullet c)=(a\bullet c)\bullet b-a\bullet (c\bullet b)$.
\item For all $a,b\in T(V)$, $\Delta(a\bullet b)=a^{(1)}\otimes a^{(2)}\bullet b+a^{(1)}\bullet b^{(1)}
\otimes a^{(2)}\shuffle b^{(2)}$, with Sweedler's notations.
\end{enumerate}

Our aim in this text is to give a generalization of the construction of $\g$ and its relative $H$ and $G$, and to study some general properties 
of this construction. Let us take any linear endomorphism $f$ of a vector space $V$. We inductively define a pre-Lie product $\bullet$ on 
the shuffle Hopf algebra $(T(V),\shuffle,\Delta)$, making it a Com-Pre-Lie Hopf algebra denoted by $T(V,f)$
(definition \ref{1} and theorem \ref{2}). For example, if $x_1,x_2,x_3 \in V$ and $w \in T(V)$:
\begin{align*}
x_1 \bullet w&=f(x_1) w,\\
x_1x_2\bullet w&=x_1 f(x_2)w+f(x_1)(x_2 \shuffle w),\\
x_1x_2x_3 \bullet w&=x_1x_2f(x_3)w+x_1f(x_2)(x_3 \shuffle w)+f(x_1)(x_2x_3\shuffle w).
\end{align*}
Here are some examples:
\begin{enumerate}
\item Take $V$ be one-dimensional and $f=Id$. Taking a basis $(x)$ of $V$, we obtain a pre-Lie product on $\K[x]$ given by:
$$x^i\bullet x^j=\binom{i+j}{i-1} x^{i+j}.$$
We shall prove that $\K[x]$ admits a basis $(y_n)_{n\geq 1}$ such that for all $i,j \geq 0$:
$$[y_i,y_j]=(i-j)y_{i+j}.$$
Hence, $\K[x]_+=Vect(y_i\mid i\geq 1)$ is isomorphic, as a Lie algebra, 
to the Faà di Bruno Lie algebra \cite{Figueroa,Foissy1,Foissy2}. This case is studied in section \ref{sFdB}.
\item Take $V=Vect(x_0,x_1)$ and $f:V \longrightarrow V$, with matrix in the basis $(x_0,x_1)$ given by:
$$\left(\begin{array}{cc} 0&1\\0&0\end{array}\right).$$
The Com-Pre-Lie Hopf algebra $T(V,f)$ is precisely the one obtained from the group of Fliess operators in dimension 1.
In dimension $n \geq 2$, the group of Fliess operators admits a decomposition into a direct product of $n$ subgroups $G_1,\ldots,G_n$.
The group $G_i$ is obtained from the Com-Pre-Lie Hopf algebra $T(V_i,f_i)$, where $V_i=Vect(x_0,\ldots,x_n)$, and 
$f_i$ is defined by $f_i(v_j)=\delta_{i,j}v_0$. These cases are studied in section \ref{sCT}.
\end{enumerate}

For any linear endomorphism $f$, the Com-Pre-Lie Hopf algebra $T(V,f)$ is graded by the length of words. 
We study the existence of other gradations of $T(V)$ as a pre-Lie algebra in proposition \ref{11}. 
This holds for the Prelie algebra of Fliess operators. In this case, the dimension of the homogeneous parts of the gradation
are given by the Fibonacci polynomials (theorem \ref{41}), and consequently $K\langle x_1,\ldots,x_n\rangle$ is a minimal space of 
generators of $T(V_i,f_i)$. In the one-dimensional case, we get in this way a result which was already proved in \cite{Foissyprelie},
where a presentation of the pre-Lie algebra of Fliess operators in dimension one is described. \\

According to \cite{Oudom1,Oudom2}, we can give a description of the enveloping algebra $\U(T(V,f))$ in theorems \ref{13} and \ref{14}.
We describe the dual Hopf algebra in theorem \ref{15} and its group of characters in theorem \ref{16}; this group is in a certain sense
the exponentiation of the lie algebra $T(V,f)$. Because of the usual problems of duality of a infinite-dimensional Hopf algebra,
we have to restrict here to the case where $f$ is locally nilpotent. This holds in the Fliess operators case, and we indeed recover
the group of Fliess operators in this way.\\

Using a generic case, we give systems of generators of $T(V,f)$ as a pre-Lie algebra and as a Com-Pre-Lie algebra.
We also study the pre-Lie and Com-Pre-Lie subalgebra of $T(V,f)$ generated by a subspace $W$ in the next section. We use for this a description of 
free Com-Pre-Lie algebras in terms of partitioned trees \cite{Foissyprelie}. First, the generic case proves that the pre-Lie subalgebra generated by $V$
can be free (theorem \ref{31} and its corollary \ref{32}); we give in theorem \ref{33} and corollary \ref{34}
equivalent conditions for $T_+(V,f)$ to be generated as a pre-Lie or as a Com-Pre-Lie algebra by a subspace of $V$ in terms of the codimension of $Im(f)$. 
Applied to the Faà di Bruno case, this gives by duality an injection of the Faà di Bruno Hopf algebra into the Connes-Kreimer Hopf algebra of rooted trees, 
and we get in this way the subalgebra of formal diffeomorphisms defined in \cite{Moscovici,Kreimer1}. \\

This paper is organized in the following way. The first sections groups all the algebraic structure associated to the linear endomorphism $V$:
the Com-Pre-Lie algebra $T(V,f)$, its enveloping algebra, the dual Hopf algebra and the associated group of characters. We also study
the gradations of $T(V,f)$ here. 
Generation by $V$ and morphisms from free Com-Pre-Lie algebras to $T(V,f)$ are studied in the second section; it uses the notion of admissible words,
whose study is reported to the appendix. The last section deals with the two main examples, the diagonalizable case, which leads to the Faà di Bruno 
Lie algebra, and the Fliess operators cases.\\

{\bf Notation.} We denote by $\K$ a commutative field of characteristic zero. All the objects (algebra, coalgebras, pre-Lie algebras$\ldots$)
in this text will be taken over $\K$. \\

{\bf Aknowledgment.} The research leading these results was partially supported by the French National Research Agency under the reference
ANR-12-BS01-0017.\\

\section{Algebraic structures associated to a linear endomorphism}

{\bf Notations.}\begin{enumerate}
\item Let $V$ be a $\K$-vector field. We denote by $T(V)$ the free associative, unitary algebra generated over $\K$ by $V$:
$$T(V)=\bigoplus_{n \geq 0} V^{\otimes n}.$$
As a vector space, it is generated by the set of words with letters in $V$; the unit is the empty word $\emptyset$ and the product 
is the concatenation of words. The augmentation ideal of $T(V)$ is:
$$T_+(V)=\bigoplus_{n \geq 1} V^{\otimes n}.$$
\item We denote by $Sh(k,l)$ the set of $(k,l)$-shuffles, that is to say permutations $\sigma \in \mathfrak{S}_{k+l}$ such that
$\sigma(1)<\ldots<\sigma(k)$ and $\sigma(k+1)<\ldots <\sigma(k+l)$.
\item $T(V)$ is a commutative dendriform algebra (or Zinbiel algebra \cite{Maclane1,Maclane2,Loday2,Loday3,Dokas}), 
with the half-shuffle product $\prec$: if $k,l\geq 1$ and if $x_1,\ldots,x_k,y_1,\ldots,y_l \in V$,
$$x_1\ldots x_k \prec y_1\ldots y_l=\sum_{\sigma \in Sh(k,l),\:\sigma^{-1}(1)=1} \sigma.x_1\ldots x_ky_1\ldots y_l;$$
the $n$-th symmetric group acts on words of length $n$ is by permutation of the letters.
By convention, if $w$ is a nonempty word, $\emptyset \prec w=0$ and $w \prec \emptyset =w$; note that $\emptyset\prec \emptyset$ is not defined. 
\item The induced commutative, associative product is the usual shuffle product $\shuffle$ \cite{Reutenauer}:
$$x_1\ldots x_k \shuffle y_1\ldots y_l=x_1\ldots x_k \prec y_1\ldots y_l+y_1\ldots y_l \prec x_1\ldots x_k
=\sum_{\sigma \in Sh(k,l)} \sigma.x_1\ldots x_ky_1\ldots y_l.$$
For all $x,y \in V$, $u,v \in T(V)$:
\begin{align*}
xu \prec v&=x(u\shuffle v),&
xu \shuffle yv&=x(u\shuffle yv)+y (xu\shuffle v).
\end{align*}\end{enumerate}

{\bf Examples.} If $a,b,c,d\in V$:
\begin{align*}
a\prec bcd&=abcd,&a\shuffle bcd&=abcd+bacd+bcad+bcda,\\
ab\prec cd&=abcd+acbd+acdb,&ab\shuffle cd&=abcd+acbd+acdb+cabd+cadb+cdab,\\
abc\prec d&=abcd+abdc+adbc,&abc\shuffle d&=abcd+abdc+adbc+dabc.
\end{align*}

\subsection{Construction of $T(V,f)$}

We now fix a vector space $V$ and a linear endomorphism $f:V\longrightarrow V$.

\begin{defi}\label{1}
We define a bilinear product $\bullet$ on $T(V)$ by induction on the length of words in the following way:  for all $x \in V$, $w,w'\in T(V)$,
$$\begin{cases}
\emptyset \bullet w'=0,\\
xw\bullet w'=x(w\bullet w')+f(x)(w \shuffle w').
\end{cases}$$ \end{defi}

{\bf Examples.} If $x_1,x_2,x_3\in V$, $w\in T(V)$:
\begin{align*}
x_1 \bullet w&=f(x_1) w,\\
x_1x_2\bullet w&=x_1 f(x_2)w+f(x_1)(x_2 \shuffle w),\\
x_1x_2x_3 \bullet w&=x_1x_2f(x_3)w+x_1f(x_2)(x_3 \shuffle w)+f(x_1)(x_2x_3\shuffle w).
\end{align*}

\begin{theo}\label{2} \begin{enumerate}
\item For all $a,b,c \in T(V)$, $(a\shuffle b)\bullet c=(a\bullet c)\shuffle b+a\shuffle (b\bullet c)$.
\item For all words $a,b,c \in A$ such that $(a,b) \neq (\emptyset,\emptyset)$,  $(a\prec b) \bullet c=(a\bullet c)\prec b+a\prec (b\bullet c)$.
\item For all $a,b,c \in T(V)$, $(a\bullet b)\bullet c-a\bullet (b\bullet c)=(a\bullet c)\bullet b-a\bullet (c\bullet b)$.
\item For all $a,b\in T(V)$, $\Delta(a\bullet b)=a^{(1)}\otimes a^{(2)}\bullet b+a^{(1)}\bullet b^{(1)}
\otimes a^{(2)}\shuffle b^{(2)}$.
\end{enumerate}
By points 1 and 3, $(T(V),\shuffle,\bullet)$ is a Com-Pre-Lie algebra \cite{Mansuy}. This structure is denoted by $T(V,f)$.
\end{theo}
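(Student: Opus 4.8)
The plan is to prove the four identities of Theorem~\ref{2} in the order $1 \to 2 \to 4 \to 3$, each by induction on the length of the leftmost argument $a$, using Definition~\ref{1} to reduce the length of $a$ by one and the shuffle recursions $xu \prec v = x(u \shuffle v)$ and $xu \shuffle yv = x(u \shuffle yv) + y(xu \shuffle v)$ to reorganize the terms that appear. The base case in each instance is $a = \emptyset$, where everything vanishes or is trivial because $\emptyset \bullet w = 0$ and $\emptyset$ is the unit for $\shuffle$; the inductive step is where the work lies.

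\medskip

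First I would prove point~1, $(a \shuffle b) \bullet c = (a \bullet c) \shuffle b + a \shuffle (b \bullet c)$, by induction on $\mathrm{length}(a)$. Writing $a = xu$ with $x \in V$, I expand $a \shuffle b$ using the shuffle recursion, apply $\bullet$ via Definition~\ref{1} (which introduces an $f(x)$-term and a term with $x$ in front of $u \bullet \text{(something)}$), invoke the induction hypothesis on the strictly shorter word $u$, and then recombine. The key bookkeeping is that the $f(x)(\cdots)$ contributions on both sides match because $\shuffle$ is associative and commutative, so $f(x)((u \shuffle b) \shuffle c) = f(x)((u \shuffle c) \shuffle b + u \shuffle (b \shuffle c))$ after applying the already-known point~1 one length down --- or more cleanly, one keeps the computation entirely at the level of the recursion. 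Point~2 is the ``half'' of point~1: $a \prec b$ differs from $a \shuffle b$ only in which argument supplies the first letter, so the same induction, now tracking the $\sigma^{-1}(1)=1$ condition, gives it; in fact point~2 for $a = xu$ reduces directly via $xu \prec b = x(u \shuffle b)$ to an application of Definition~\ref{1} and point~1.

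\medskip

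Next I would prove the Hopf-compatibility point~4, $\Delta(a \bullet b) = a^{(1)} \otimes a^{(2)} \bullet b + a^{(1)} \bullet b^{(1)} \otimes a^{(2)} \shuffle b^{(2)}$, again by induction on $\mathrm{length}(a)$, using that $\Delta$ is the deconcatenation (or unshuffle) coproduct for which $(T(V), \shuffle, \Delta)$ is a Hopf algebra, so $\Delta$ is multiplicative for $\shuffle$ and satisfies $\Delta(xw) = x \otimes w + (x \otimes \emptyset)\Delta(w) + \dots$ appropriately. For $a = xu$ one applies $\Delta$ to $xu \bullet b = x(u \bullet b) + f(x)(u \shuffle b)$, uses the known coproduct formula on each factor, applies the induction hypothesis to $\Delta(u \bullet b)$, and checks the two sides agree --- this is the most term-heavy of the four and I expect it to be the main obstacle, since one must carefully separate the Sweedler components of $a = xu$ (those where $x$ stays on the left versus those where the cut is immediately after $x$) and see that the $f(x)$-terms land exactly in the $a^{(1)} \bullet b^{(1)} \otimes a^{(2)} \shuffle b^{(2)}$ piece. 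Finally, point~3, the right pre-Lie axiom $(a \bullet b) \bullet c - a \bullet (b \bullet c) = (a \bullet c) \bullet b - a \bullet (c \bullet b)$, follows by the same induction on $\mathrm{length}(a)$: expanding $a = xu$ via Definition~\ref{1} produces terms $x(u \bullet \cdots)$ handled by the induction hypothesis on $u$, and terms $f(x)(u \shuffle \cdots)$ and $f(x)((u \bullet b) \shuffle c)$ etc.\ which rearrange correctly precisely because of point~1 (the Com-Pre-Lie compatibility between $\shuffle$ and $\bullet$); the antisymmetry in $b, c$ of the right-hand difference is then manifest. Once all four hold, points~1 and~3 are exactly the axioms of a (right) Com-Pre-Lie algebra in the sense of \cite{Mansuy}, giving the final assertion.
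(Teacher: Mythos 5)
Your overall strategy --- unwinding the recursive definition of $\bullet$ and inducting --- is exactly the paper's, and your treatment of points 2, 3 and 4 coincides with what the paper does: point 2 reduces via $xu\prec b=x(u\shuffle b)$ to Definition \ref{1} plus point 1, and points 3 and 4 go by induction on $lg(a)$, with point 3 using point 1 to reorganize the $f(x)(\cdots)$ terms into something manifestly symmetric in $b,c$. However, there is a concrete gap in your plan for point 1: induction on $lg(a)$ alone does not close. Writing $a=xu$ and $b=yv$, the shuffle recursion gives $a\shuffle b=x(u\shuffle yv)+y(xu\shuffle v)$, and applying Definition \ref{1} to the second summand produces the term $y\bigl((xu\shuffle v)\bullet c\bigr)$, in which the left factor of the shuffle is still the full word $a=xu$. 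Your induction hypothesis (the identity for strictly shorter first arguments) says nothing about this term; what is needed is the identity for the pair $(xu,v)$, which has the same $lg(a)$ but smaller $lg(b)$. The paper resolves this by inducting on $lg(a)+lg(b)$ and peeling a letter off both $a$ and $b$ simultaneously; alternatively one could first prove point 2 for all pairs by induction on $lg(a)+lg(b)$ and then deduce point 1 from $a\shuffle b=a\prec b+b\prec a$. The fact that your sketch records only the $x(\cdots)$ and $f(x)(\cdots)$ contributions, and never the $y(\cdots)$ and $f(y)(\cdots)$ ones, suggests the second summand of the shuffle recursion was overlooked.

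A smaller point: your ``key bookkeeping'' identity $f(x)((u\shuffle b)\shuffle c)=f(x)((u\shuffle c)\shuffle b+u\shuffle(b\shuffle c))$ is false as written (by commutativity and associativity of $\shuffle$ it would force $u\shuffle b\shuffle c=0$); presumably you meant $f(x)\bigl((u\shuffle b)\bullet c\bigr)=f(x)\bigl((u\bullet c)\shuffle b+u\shuffle(b\bullet c)\bigr)$, i.e.\ an instance of the identity being proved. With the induction variable for point 1 corrected to $lg(a)+lg(b)$, the rest of your plan goes through and is essentially the paper's proof.
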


\begin{proof} 1. We assume that $a,b,c$ are words. If $a=\emptyset$, then:
$$(\emptyset \shuffle b)\bullet c=b\bullet c=(\emptyset \bullet c)\shuffle b+\emptyset \shuffle (b\bullet c).$$
If $b=\emptyset$, then $(a\shuffle \emptyset)\bullet c=a\bullet c=(a\bullet c)\shuffle \emptyset+a\shuffle (\emptyset \bullet c)$.
We now assume that $a\neq \emptyset$ and $b \neq \emptyset$. We prove the result by induction on $n=lg(a)+lg(b)$. This is obvious if $n\leq 1$.
Let us assume the result at rank $n-1$. We put $a=xu$ and $b=yv$. Then:
\begin{align*}
(a\shuffle b)\bullet c&=(x(u \shuffle yv))\bullet c+(y(xu\shuffle v))\bullet c\\
&=x ((u\shuffle yv)\bullet c)+f(x)(u\shuffle yv\shuffle c)+y((xu\shuffle v)\bullet c)+f(y)(xu\shuffle v\shuffle c)\\
&=x((u\bullet c)\shuffle yv)+x(u\shuffle (yv \bullet c))+y((xu\bullet c)\shuffle b)+y(xu \shuffle (v\bullet c))\\
&+f(x)(u\shuffle yv\shuffle c)+f(y)(xu\shuffle v\shuffle c)\\
&=x((u\bullet c)\shuffle yv)+x(u\shuffle y(v \bullet c))+x(u\shuffle f(y)(v \shuffle c))+y(x(u\bullet c)\shuffle b)\\
&+y(f(x)(u\shuffle c)\shuffle b)+y(xu \shuffle (v\bullet c))+f(x)(u\shuffle yv\shuffle c)+f(y)(xu\shuffle v\shuffle c)\\
&=x((u\bullet c)\shuffle yv)+y(x(u\bullet c)\shuffle b)+x(u\shuffle y(v \bullet c))+y(xu \shuffle (v\bullet c))\\
&+x(u\shuffle f(y)(v \shuffle c))+f(y)(xu\shuffle v\shuffle c)+y(f(x)(u\shuffle c)\shuffle b)+f(x)(u\shuffle yv\shuffle c)\\
&=(x(u\bullet c))\shuffle yv+ xu\shuffle y(v\bullet c)+xu \shuffle f(y)(v\shuffle c)+f(x)(u\shuffle c) \shuffle yv\\
&=((xu) \bullet c)\shuffle yv+xu \shuffle (yv \bullet c)\\
&=(a\bullet c)\shuffle b+a\shuffle (b\bullet c).
\end{align*}

2. If $a=\emptyset$, $(\emptyset \prec b) \bullet c=0=(\emptyset \bullet b)\prec c+\emptyset \prec (b \bullet c)$. If $a=xu$, then:
\begin{align*}
(a\prec b) \bullet c&=(x(u\shuffle b))\bullet c\\
&=x ((u\shuffle b)\bullet c)+f(x)(u\shuffle b \shuffle c)\\
&=x((u\bullet c) \shuffle b)+x(u\shuffle (b\bullet c))+f(x)(u\shuffle b\shuffle c)\\
&=x((u\bullet c) \shuffle b)+f(x)(u\shuffle c\shuffle b)+x(u\shuffle (b\bullet c))\\
&= (x(u\bullet c)+f(x)(u \shuffle c))\prec b+x(u\shuffle (b\bullet c))\\
&=(xu\bullet c)\prec b+xu\prec (b\bullet c)\\
&=(a\bullet c)\prec b+a\prec (b\bullet c).
\end{align*}

3. We assume that $a,b,c$ are words and we proceed by induction on $lg(a)$. If $a=\emptyset$:
$$(\emptyset \bullet b)\bullet c-\emptyset\bullet(b\bullet c)=0.$$
If $a=xu$, then:
\begin{align*}
(a\bullet b) \bullet c-a\bullet (b\bullet c)&=(x (u\bullet b))\bullet c+(f(x)(u\shuffle b))\bullet c-x (u\bullet (b\bullet c))
-f(x)(u\shuffle (b\bullet c))\\
&=x ((u\bullet b)\bullet c)+f(x)((u\bullet b) \shuffle c)+f(x)((u\shuffle b)\bullet c)+f^2(x)(u\shuffle b\shuffle c)\\
&-x (u\bullet (b\bullet c))-f(x)(u\shuffle (b\bullet c))\\
&=x((u\bullet b)\bullet c-u\bullet (b\bullet c))\\
&+f(x)((u\bullet b) \shuffle c+(u\bullet c)\shuffle b)+f^2(x)(u\shuffle b\shuffle c).
\end{align*}
The second and third terms are obviously symmetric in $b,c$. By the induction hypothesis applied to $u$, the first term is also symmetric in $b,c$,
so the pre-Lie relation is satisfied for $a,b,c$. \\

4. Let us assume that $a$ and $b$ are words. We proceed by induction on the length of $a$. If $a=\emptyset$:
$$\Delta(\emptyset\bullet b)=\emptyset\otimes \emptyset \bullet b+\emptyset \bullet b^{(1)}\otimes \emptyset \shuffle b^{(2)}=0.$$
If the length of $a$ is $\geq 1$, we put $a=xu$. Applying the induction hypothesis to $u$:
\begin{align*}
\Delta(a\bullet b)&=\Delta(x (u\bullet b)+f(x)(u\shuffle b))\\
&=x u^{(1)}\otimes u^{(2)}\bullet b+x(u^{(1)}\bullet b)\otimes u^{(2)}\shuffle b^{(2)}+\emptyset \otimes x(u\bullet b)\\
&+f(x)(u^{(1)}\shuffle b^{(2)})\otimes u^{(2)}\shuffle b^{(2)}+\emptyset \otimes f(x)(u\shuffle b)\\
&=x u^{(1)}\otimes u^{(2)}\bullet b+(xu^{(1)})\bullet b^{(1)}\otimes u^{(2)}\shuffle b^{(2)}
+\emptyset \otimes (xu)\bullet b;\\ \\
a^{(1)}\otimes a^{(2)}\bullet b&=xu^{(1)}\otimes u^{(2)}\bullet b+\emptyset \otimes(xu) \bullet b,\\
a^{(1)}\bullet b^{(1)}\otimes a^{(2)}\shuffle b^{(2)}&=(xu^{(1)})\bullet b^{(1)}\otimes u^{(2)}\shuffle b^{(2)}+
\emptyset\bullet b^{(1)}\otimes (xu)\shuffle b\\
&=(xu^{(1)})\bullet b^{(1)}\otimes u^{(2)}\shuffle b^{(2)}.
\end{align*}
Hence, the result holds for all words $a,b$. \end{proof} \\

Let us now give a closed formula for the pre-Lie product of two words.

\begin{defi}
Let $k,l \in \mathbb{N}$. For all $\sigma \in \S_{k+l}$, we put:
$$m_k(\sigma)=max\{i\in \{1,\ldots,k\}\mid \sigma(1)=1,\ldots, \sigma(i)=i\},$$
with the convention $m_k(\sigma)=0$ if there is no $i \in\{1,\ldots,k\}$ such that $\sigma(i)=i$. 
\end{defi}

\begin{prop}\label{4}
Let $k,l \in \mathbb{N}$ and let $x_1,\ldots,x_{k+l} \in V$. Then:
$$x_1\ldots x_k \bullet x_{k+1}\ldots x_{k+l}
=\sum_{\sigma \in Sh(k,l)} \left(\sum_{i=1}^{m_k(\sigma)}
Id^{\otimes (i-1)}\otimes f \otimes Id^{\otimes (k+l-i)}\right)(\sigma.x_1\ldots x_{k+l}).$$
\end{prop}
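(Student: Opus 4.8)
The plan is to prove the identity by induction on $k=lg(a)$, directly from the recursive Definition~\ref{1}. For $k=0$ both sides vanish: the left-hand side because $\emptyset\bullet w=0$, and the right-hand side because $Sh(0,l)=\{Id\}$ and $m_0(Id)=0$ by convention, so the inner sum is empty. For the inductive step write $a=x_1u$ with $u=x_2\ldots x_k$ and $b=x_{k+1}\ldots x_{k+l}$; Definition~\ref{1} gives $a\bullet b=x_1(u\bullet b)+f(x_1)(u\shuffle b)$, and it suffices to identify the right-hand side of the proposition with this expression.

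The combinatorial heart of the argument, and the step I expect to need the most care, is the behaviour of $m_k$ under splitting $Sh(k,l)$ according to the first letter of $\sigma.x_1\ldots x_{k+l}$. For $k,l\geq 1$ a $(k,l)$-shuffle $\sigma$ satisfies either $\sigma(1)=1$ or $\sigma(k+1)=1$; in the second case $m_k(\sigma)=0$, so such shuffles contribute nothing to the right-hand side. The shuffles with $\sigma(1)=1$ are in bijection with $Sh(k-1,l)$ via $\tau(i)=\sigma(i+1)-1$ (deleting the first letter), and under this bijection $m_k(\sigma)=1+m_{k-1}(\tau)$ and $\sigma.x_1\ldots x_{k+l}=x_1\,(\tau.x_2\ldots x_{k+l})$; moreover, whenever $i\leq m_k(\sigma)$ the $i$-th letter of $\sigma.x_1\ldots x_{k+l}$ is exactly $x_i$, so each operator $Id^{\otimes(i-1)}\otimes f\otimes Id^{\otimes(k+l-i)}$ simply replaces that letter by $f(x_i)$.

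Feeding this in, the right-hand side becomes the sum over $\tau\in Sh(k-1,l)$ and $1\leq i\leq 1+m_{k-1}(\tau)$ of the word obtained from $x_1\,(\tau.x_2\ldots x_{k+l})$ by hitting its $i$-th letter with $f$. The $i=1$ terms contribute $f(x_1)$ prepended to $\sum_{\tau\in Sh(k-1,l)}\tau.x_2\ldots x_{k+l}=u\shuffle b$, hence $f(x_1)(u\shuffle b)$. The $i\geq 2$ terms, after reindexing $j=i-1$, contribute $x_1$ prepended to $\sum_{\tau\in Sh(k-1,l)}\sum_{j=1}^{m_{k-1}(\tau)}$ of the word obtained from $\tau.x_2\ldots x_{k+l}$ by hitting its $j$-th letter with $f$, which by the induction hypothesis applied to $u$ is exactly $x_1(u\bullet b)$. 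Summing the two pieces gives $x_1(u\bullet b)+f(x_1)(u\shuffle b)=a\bullet b$, which closes the induction.
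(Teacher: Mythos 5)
Your proof is correct and follows essentially the same route as the paper: induction on $k$ using the recursive definition $x_1u\bullet b=x_1(u\bullet b)+f(x_1)(u\shuffle b)$, together with the observation that shuffles with $\sigma(1)\neq 1$ have $m_k(\sigma)=0$ and that the remaining shuffles biject with $Sh(k-1,l)$ with $m_k(\sigma)=1+m_{k-1}(\tau)$. Your write-up is in fact slightly more explicit than the paper's about this bijection and about why the operator $Id^{\otimes(i-1)}\otimes f\otimes Id^{\otimes(k+l-i)}$ acts on the letter $x_i$, but the argument is the same.
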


\begin{proof} By induction on $k$. If $k=0$, then $x_1\ldots x_k=\emptyset$.
For all $\sigma \in Sh(0,l)$, $m_0(\sigma)=0$ by convention, so:
$$\sum_{\sigma \in Sh(k,l)} \left(\sum_{i=1}^{m_k(\sigma)}
Id^{\otimes (i-1)}\otimes f \otimes Id^{\otimes (k+l-i)}\right)(\sigma.x_1\ldots x_{k+l})
=0=\emptyset \bullet x_{k+1}\ldots x_{k+l}.$$
Let us assume the result at rank $k-1$. 
\begin{align*}
&x_1\ldots x_k\bullet x_{k+1}\ldots x_{k+l}\\
&=x_1(x_2\ldots x_k\bullet x_{k+1}\ldots x_{k+l})+f(x_1)(x_2\ldots x_k\shuffle x_{k+1}\ldots x_{k+l})\\
&=\sum_{\tau \in Sh(k-1,l)}\left(\sum_{i=1}^{m_{k-1}(\tau)}
Id\otimes Id^{\otimes (i-1)}\otimes f\otimes Id^{\otimes (k+l)}\right)(x_1 \tau.(x_2\ldots x_{k+l}))\\
&+\sum_{\tau \in Sh(k-1,l)}f\otimes Id^{k+l-1}(x_1 \tau.(x_2\ldots x_{k+l}))\\
&=\sum_{\sigma \in Sh(k,l), \sigma(1)=1}\left(\sum_{i=2}^{m_k(\sigma)}Id^{\otimes (i-1)}\otimes f\otimes Id^{\otimes (k+l)}
\right)(\sigma.(x_1\ldots x_{k+l}))\\
&+\sum_{\sigma \in Sh(k,l), \sigma(1)=1}f\otimes Id^{k+l-1}(\sigma.(x_1\ldots x_{k+l}))\\
&=\sum_{\sigma \in Sh(k,l), \sigma(1)=1}\left(\sum_{i=1}^{m_k(\sigma)}Id^{\otimes (i-1)}\otimes f\otimes Id^{\otimes (k+l)}
\right)(\sigma.(x_1\ldots x_{k+l}))\\
&=\sum_{\sigma \in Sh(k,l)}\left(\sum_{i=1}^{m_k(\sigma)}Id^{\otimes (i-1)}\otimes f\otimes Id^{\otimes (k+l)}
\right)(\sigma.(x_1\ldots x_{k+l})),
\end{align*}
so the result holds for all $k$. \end{proof}\\

{\bf Remark.} In particular, if $x_1,\ldots,x_k \in V$:
$$x_1\ldots x_k\bullet \emptyset=\sum_{i=1}^k x_1\ldots x_{i-1}f(x_i)x_{i+1}\ldots x_k.$$

The pre-Lie product $\bullet$ is generally non associative, as proved in the following proposition:

\begin{prop} The following conditions are equivalent:
\begin{enumerate}
\item $\bullet$ is trivial.
\item $\bullet$ is associative.
\item $f=0$.
\end{enumerate}\end{prop}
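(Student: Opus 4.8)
The plan is to prove the three implications $3 \Rightarrow 1 \Rightarrow 2 \Rightarrow 3$, since $3 \Rightarrow 1$ and $1 \Rightarrow 2$ are immediate and the real content is $2 \Rightarrow 3$. First, if $f = 0$, then Definition \ref{1} gives $xw \bullet w' = x(w \bullet w')$ for all $x \in V$, $w, w' \in T(V)$, and an easy induction on the length of $a$ shows $a \bullet w' = 0$ for every word $a$ (the base case $\emptyset \bullet w' = 0$ is part of the definition); hence $\bullet$ is trivial. Clearly if $\bullet$ is trivial it is associative. So everything reduces to showing that associativity of $\bullet$ forces $f = 0$.

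For $2 \Rightarrow 3$, I would test associativity on short words. Take $x, y \in V$ arbitrary. Using the examples computed after Definition \ref{1} (or Proposition \ref{4}), we have $x \bullet y = f(x)y$ and $x \bullet \emptyset = f(x)$. Now compute both sides of the associativity relation $(x \bullet y) \bullet \emptyset = x \bullet (y \bullet \emptyset)$. The right-hand side is $x \bullet f(y) = f(x)f(y)$, a word of length $2$ living in $V^{\otimes 2}$. The left-hand side is $(f(x)y) \bullet \emptyset$, which by the Remark following Proposition \ref{4} equals $f(f(x))\,y + f(x)\,f(y) = f^2(x)y + f(x)f(y)$. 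Comparing, associativity forces $f^2(x)\,y = 0$ in $T(V)$ for all $x, y \in V$. If $V \neq 0$, pick $y \neq 0$; then $f^2(x) \otimes y = 0$ in $V \otimes V$ forces $f^2(x) = 0$, so $f^2 = 0$. (If $V = 0$ there is nothing to prove.)

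This gives $f^2 = 0$ but not yet $f = 0$, so a second test is needed. Here I would use words of length $2$. Take $x_1, x_2 \in V$ and compute $(x_1 x_2 \bullet \emptyset) \bullet \emptyset$ versus $x_1 x_2 \bullet (\emptyset \bullet \emptyset) = 0$ (since $\emptyset \bullet \emptyset = 0$). By the Remark, $x_1 x_2 \bullet \emptyset = f(x_1)x_2 + x_1 f(x_2)$, and applying $- \bullet \emptyset$ again (using bilinearity and the Remark on each length-$2$ word) yields $f^2(x_1)x_2 + f(x_1)f(x_2) + f(x_1)f(x_2) + x_1 f^2(x_2) = 2 f(x_1)f(x_2)$ after using $f^2 = 0$. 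Associativity then forces $f(x_1)f(x_2) = 0$ in $V^{\otimes 2}$ for all $x_1, x_2$; taking $x_1 = x_2 = x$ with $f(x) \neq 0$ would give $f(x) \otimes f(x) \neq 0$, a contradiction, so $f(x) = 0$ for all $x$, i.e. $f = 0$. (Characteristic zero is used here to divide by $2$, consistent with the standing assumption on $\K$.)

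The main obstacle is simply that a single associativity test only yields $f^2 = 0$, not $f = 0$ — one must iterate with a slightly longer word, or alternatively combine the two computations above. An equivalent and perhaps cleaner route: from $(x \bullet y) \bullet \emptyset = x \bullet (y \bullet \emptyset)$ one can instead look at the components in $V^{\otimes 2}$ more carefully, or test $(x \bullet \emptyset) \bullet y = x \bullet (\emptyset \bullet y) = 0$, giving $(f(x)) \bullet y = f^2(x)\,y + f(x)f(y) = 0$; combined with $f^2 = 0$ this directly gives $f(x)f(y) = 0$ and hence $f = 0$. I would present whichever pair of identities is shortest to verify from Proposition \ref{4} and its Remark. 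Either way the argument is a short explicit computation, and no induction beyond the trivial $3 \Rightarrow 1$ step is required.
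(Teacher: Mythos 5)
Your proof is correct and follows the same basic strategy as the paper's: establish $2\Rightarrow 3$ by evaluating the associator on short words, first extracting $f^2=0$ and then $f=0$. The details differ in a way worth noting. For $f^2=0$ the paper simply computes $(x\bullet\emptyset)\bullet\emptyset-x\bullet(\emptyset\bullet\emptyset)=f^2(x)$, which is slightly shorter than your test with $(x\bullet y)\bullet\emptyset$, though both work. For the second step the paper tests $(xx\bullet x)\bullet\emptyset$ against $xx\bullet(x\bullet\emptyset)$, obtains $2f(x)f(x)x+f(x)xf(x)$, and must then apply a functional $g\otimes Id\otimes g$ with $g(f(x))=1$ and argue $x=\lambda f(x)$ to reach a contradiction; your test of $(x_1x_2\bullet\emptyset)\bullet\emptyset=2f(x_1)f(x_2)$ against $x_1x_2\bullet(\emptyset\bullet\emptyset)=0$ is cleaner, since $f(x)\otimes f(x)=0$ in $V^{\otimes 2}$ immediately forces $f(x)=0$ with no auxiliary functional. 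One small slip in your closing aside: $(x\bullet\emptyset)\bullet y=f(x)\bullet y=f^2(x)y$ only --- there is no $f(x)f(y)$ term, because $f(x)$ is a single letter and $x_1\bullet w=f(x_1)w$ --- so that ``alternative route'' merely reproduces $f^2=0$ and would not finish the argument. This does not affect your main proof, which is complete and correct as written.
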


\begin{proof} $1. \Longrightarrow 2.$ Obvious.\\

$2.\Longrightarrow 3.$ Let $x\in V$. Then:
$$(x\bullet \emptyset) \bullet \emptyset-x\bullet (\emptyset\bullet \emptyset)=f(x)\bullet \emptyset-0=f^2(x),$$
so $f^2=0$. Moreover:
\begin{align*}
(xx \bullet x) \bullet \emptyset-xx\bullet (x\bullet \emptyset)
&=(2f(x)xx+xf(x)x)\bullet \emptyset-xx \bullet f(x)\\
&=2f^2(x)xx+2f(x)f(x)x+2f(x)xf(x)+f(x)f(x)x\\
&+xf^2(x)x+xf(x)f(x)-f(x)f(x)x-f(x)xf(x)-xf(x)f(x)\\
&=2f(x)f(x)x+f(x)xf(x).
\end{align*}
Let us assume that $f(x) \neq 0$. There exists $g \in V^*$, such that $g(f(x))=1$. 
$$(g\otimes Id \otimes g)((xx \bullet x) \bullet \emptyset-xx\bullet (x\bullet \emptyset))=2f(x)g(x)+x=0,$$
so there exists $\lambda \in \K$ such that $x=\lambda f(x)$. Then $f(x)=\lambda f^2(x)=0$: this is a contradiction. 
Finally, $f(x)=0$. \\

$3. \Longrightarrow 1$. This is a direct consequence of proposition \ref{4}. \end{proof}

\begin{cor} \label{6}
We have:
$$T(V,f)\bullet T(V,f)=\bigoplus_{n\geq 1} \left(\sum_{i=1}^n T(V,f)^{\otimes (i-1)} \otimes Im(f)\otimes T(V,f)^{\otimes (n-i)}\right).$$
Consequently, $T(V,f)\bullet T(V,f)\subseteq T_+(V,f)$.
\end{cor}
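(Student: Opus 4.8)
The plan is to read off both inclusions directly from the closed formula of proposition \ref{4}. First I would prove the inclusion $T(V,f)\bullet T(V,f)\subseteq \bigoplus_{n\geq 1}\left(\sum_{i=1}^n T(V,f)^{\otimes(i-1)}\otimes Im(f)\otimes T(V,f)^{\otimes(n-i)}\right)$. Let $a=x_1\ldots x_k$ and $b=x_{k+1}\ldots x_{k+l}$ be words with $k,l\geq 0$. By proposition \ref{4}, $a\bullet b$ is a sum of terms of the form $\left(Id^{\otimes(i-1)}\otimes f\otimes Id^{\otimes(k+l-i)}\right)(\sigma.x_1\ldots x_{k+l})$ with $1\leq i\leq m_k(\sigma)\leq k$; each such term is a word of length $k+l=n$ in which the $i$-th letter lies in $Im(f)$, hence belongs to $T(V,f)^{\otimes(i-1)}\otimes Im(f)\otimes T(V,f)^{\otimes(n-i)}$. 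Summing over words spanning $T(V)$ gives the claimed inclusion; note $n=k+l\geq 1$ automatically, since a nonempty contribution forces $m_k(\sigma)\geq 1$, hence $k\geq 1$.

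For the reverse inclusion I would produce, for every $n\geq 1$, every $i\in\{1,\ldots,n\}$, and every pure tensor $v_1\otimes\cdots\otimes v_{i-1}\otimes f(v_i)\otimes v_{i+1}\otimes\cdots\otimes v_n$, a preimage under $\bullet$. The natural candidate comes from the remark following proposition \ref{4}: $x_1\ldots x_k\bullet\emptyset=\sum_{j=1}^k x_1\ldots x_{j-1}f(x_j)x_{j+1}\ldots x_k$. Taking $k=n$, $x_j=v_j$, this already lands in the target space, but it is a full symmetrized sum rather than a single term. To isolate a single position one can either induct on $n$ (splitting off the first letter via $v_1 w\bullet\emptyset = v_1(w\bullet\emptyset)+f(v_1)w$, so that $f(v_1)w = v_1w\bullet\emptyset - v_1(w\bullet\emptyset)$ exhibits the $i=1$ term as a difference of elements of the image, and for $i\geq 2$ use the recursion together with the inductive hypothesis on $w=v_2\ldots v_n$), or observe directly that the matrix relating the vectors $(v_1\ldots v_{j-1}f(v_j)v_{j+1}\ldots v_n)_{j}$ to their sum, across sufficiently many choices of the $v$'s, is invertible. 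The inductive argument is cleaner: it shows each $v_1\otimes\cdots\otimes f(v_i)\otimes\cdots\otimes v_n$ lies in $T(V,f)\bullet T(V,f)$, and since such tensors span $\sum_{i=1}^n T(V,f)^{\otimes(i-1)}\otimes Im(f)\otimes T(V,f)^{\otimes(n-i)}$, the reverse inclusion follows.

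The last sentence, $T(V,f)\bullet T(V,f)\subseteq T_+(V,f)$, is then immediate: every summand on the right-hand side of the displayed equality sits in $V^{\otimes n}$ with $n\geq 1$, hence in $T_+(V,f)=\bigoplus_{n\geq 1}V^{\otimes n}$.

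The only genuinely delicate point is the reverse inclusion — specifically, extracting a single-position tensor $v_1\otimes\cdots\otimes f(v_i)\otimes\cdots\otimes v_n$ from the symmetric sums that proposition \ref{4} naturally produces. Everything else is a direct transcription of the closed formula. I expect the write-up to handle this by the induction on $n$ sketched above, peeling off the leftmost letter and invoking the identity $f(v_1)w = v_1w\bullet\emptyset - v_1(w\bullet\emptyset)$ together with the inductive hypothesis applied to the shorter word $w$.
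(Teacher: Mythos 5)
Your forward inclusion and the concluding remark $T(V,f)\bullet T(V,f)\subseteq T_+(V,f)$ are exactly the paper's: both read the first inclusion off the closed formula of proposition \ref{4}. For the reverse inclusion you take a genuinely different route. The paper fixes the length $n$ and inducts on the position $i$ of the $Im(f)$-letter: it expands $x_1\ldots x_i\bullet x_{i+1}\ldots x_n$ by proposition \ref{4}, notes that every shuffle $\sigma\in Sh(i,n-i)$ other than the identity has $m_i(\sigma)<i$, so all terms other than the target $x_1\ldots f(x_i)\ldots x_n$ carry their $Im(f)$-letter in a position $j<i$ and are absorbed by the induction hypothesis. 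You instead induct on the length, peeling off the leftmost letter with the defining recursion. Both inductions work; yours trades the $m_k(\sigma)$ bookkeeping for the single identity $v(a\bullet b)=(va)\bullet b-f(v)(a\shuffle b)$, which is arguably cleaner.

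There is, however, one step that does not hold up as written: your treatment of the $i=1$ terms. You write $f(v_1)w=v_1w\bullet\emptyset-v_1(w\bullet\emptyset)$ and call the right-hand side ``a difference of elements of the image'', but $v_1(w\bullet\emptyset)$ is the concatenation of $v_1$ with a sum of length-$(n-1)$ words, i.e.\ a sum of length-$n$ words whose $Im(f)$-letter sits in positions $\geq 2$ --- precisely the elements your induction has not yet shown to lie in $T(V,f)\bullet T(V,f)$ (the induction hypothesis on $n$ does not apply, since these have length $n$). The repair is one line and is what the paper itself uses for its base case: $v_1\bullet w=v_1(\emptyset\bullet w)+f(v_1)(\emptyset\shuffle w)=f(v_1)w$, so every $i=1$ term is literally of the form $a\bullet b$. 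With that in hand your inductive step for $i\geq 2$ is sound: writing $v_2\ldots f(v_i)\ldots v_n=\sum_\alpha a_\alpha\bullet b_\alpha$ by the induction hypothesis, one gets $v_1(a_\alpha\bullet b_\alpha)=(v_1a_\alpha)\bullet b_\alpha-v_1\bullet(a_\alpha\shuffle b_\alpha)$, both terms in the image. Your alternative suggestion (inverting a ``matrix'' relating single-position tensors to the symmetrized sums $\sum_j v_1\ldots f(v_j)\ldots v_n$) is not substantiated and is unnecessary; stick with the induction.
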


\begin{proof} We put:
$$W=\bigoplus_{n\geq 1} \left(\sum_{i=1}^n T(V,f)^{\otimes (i-1)} \otimes Im(f)\otimes T(V,f)^{\otimes (n-i)}\right).$$
Proposition \ref{4} directly implies that $T(V,f)\bullet T(V,f)\subseteq W$.
Let $w=x_1\ldots x_{i-1} f(x_i) x_{i+1}\ldots x_n \in T(V,f)^{\otimes (i-1)} \otimes Im(f)\otimes T(V,f)^{\otimes (n-i)}$, with $1\leq i \leq n$.
Let us prove that $w \in T(V,f) \bullet T(V,f)$ by induction on $i$. If $i=1$, then:
$$x_1\bullet x_2\ldots x_n=f(x_1)x_2\ldots x_n+0\in T(V,f)\bullet T(V,f).$$
Let us assume the result at all rank $j<i$.  So:
\begin{align*}
&x_1\ldots x_i\bullet x_{i+1}\ldots x_n-x_1\ldots x_{i-1}f(x_i)x_{i+1}\ldots x_n\\
&=\sum_{\sigma \in Sh(i,n-i), \sigma \neq Id}\left(\sum_{j=1}^{m_i(\sigma)}
Id^{\otimes (j-1)}\otimes f \otimes Id^{\otimes (n-j)}\right)(\sigma.x_1\ldots x_n)\\
&+\left(\sum_{j=1}^i Id^{\otimes (j-1)}\otimes f \otimes Id^{\otimes (n-j)}\right)(x_1\ldots x_n)
-x_1\ldots x_{i-1}f(x_i)x_{i+1}\ldots x_n\\
&=\sum_{\sigma \in Sh(i,n-i), \sigma \neq Id}\left(\sum_{j=1}^{m_i(\sigma)}
Id^{\otimes (j-1)}\otimes f \otimes Id^{\otimes (n-j)}\right)(\sigma.x_1\ldots x_n)\\
&+\left(\sum_{j=1}^{i-1}Id^{\otimes (j-1)}\otimes f \otimes Id^{\otimes (n-j)}\right)(\sigma.x_1\ldots x_n)
\end{align*}
If $\sigma \in Sh(i,n-i)$ is different from the identity of $\S_n$, then $m_i(\sigma)<i$. By the induction hypothesis,
$x_1\ldots x_i\bullet x_{i+1}\ldots x_n-x_1\ldots x_{i-1}f(x_i)x_{i+1}\ldots x_n \in T(V,f) \bullet T(V,f)$, so 
$x_1\ldots x_{i-1}f(x_i)x_{i+1}\ldots x_n \in T(V,f) \bullet T(V,f)$. Finally, $W=T(V,f)\bullet T(V,f)$. \end{proof}\\

Notice that $\K\emptyset$ is a trivial pre-Lie subalgebra of $T(V,f)$, and $T_+(V,f)$ is a pre-Lie ideal of $T(V,f)$. Hence:

\begin{prop}
As a Lie algebra, $T(V,f)= T_+(V,f)\rtimes \K\emptyset$. The right action of $\emptyset$ on $T(V,f)$ is given by:
$$x_1\ldots x_k \bullet \emptyset=\sum_{i=1}^k x_1 \ldots f(x_i)\ldots x_k.$$
\end{prop}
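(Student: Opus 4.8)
The plan is to read this off from the structural facts already established rather than by a fresh computation. First I would record the vector-space decomposition $T(V)=\K\emptyset\oplus T_+(V)$ and check that each summand is a Lie subalgebra of the appropriate kind. On the one hand, $\emptyset\bullet\emptyset=0$ by Definition \ref{1}, so $\K\emptyset$ is a pre-Lie subalgebra of $T(V,f)$, hence a (one-dimensional, thus abelian) Lie subalgebra for the antisymmetrized bracket. On the other hand, Corollary \ref{6} gives $T(V,f)\bullet T(V,f)\subseteq T_+(V,f)$, so $T_+(V,f)$ is closed under $\bullet$ both on the left and on the right by arbitrary elements of $T(V,f)$; it is therefore a pre-Lie ideal — this is exactly the remark stated just before the proposition — and in particular a Lie ideal, and a pre-Lie (hence Lie) algebra in its own right under the restriction of $\bullet$.

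Next I would invoke the standard fact that a Lie algebra which decomposes, as a vector space, into a Lie ideal $I$ plus a Lie subalgebra $\mathfrak{h}$ is the semidirect product $I\rtimes\mathfrak{h}$, where $\mathfrak{h}$ acts on $I$ through the restriction of the adjoint action. Applying this with $I=T_+(V,f)$ and $\mathfrak{h}=\K\emptyset$ yields $T(V,f)=T_+(V,f)\rtimes\K\emptyset$ as Lie algebras. The action of the generator $\emptyset$ on an element $a$ is then controlled by the bracket $[a,\emptyset]=a\bullet\emptyset-\emptyset\bullet a$, and since $\emptyset\bullet a=0$ by Definition \ref{1}, the ``right action of $\emptyset$'' reduces to the map $a\mapsto a\bullet\emptyset$. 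The only point here that requires a moment of care is fixing the side/sign convention so that this right action is $-\bullet\,\emptyset$ and not its negative; this is settled precisely by the vanishing of $\emptyset$ acting on the left.

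Finally I would compute $x_1\ldots x_k\bullet\emptyset$ by specializing Proposition \ref{4} to $l=0$: the set $Sh(k,0)$ consists only of the identity permutation, for which $m_k(\mathrm{Id})=k$, so the sum collapses to $x_1\ldots x_k\bullet\emptyset=\sum_{i=1}^k\bigl(Id^{\otimes(i-1)}\otimes f\otimes Id^{\otimes(k-i)}\bigr)(x_1\ldots x_k)=\sum_{i=1}^k x_1\ldots f(x_i)\ldots x_k$, which is the closed formula already recorded in the Remark following Proposition \ref{4}. I do not expect any genuine obstacle: the argument is entirely a matter of assembling Definition \ref{1}, Proposition \ref{4} and Corollary \ref{6}, the only bookkeeping being the semidirect-product convention noted above.
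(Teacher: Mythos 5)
Your argument is correct and follows essentially the same route as the paper: the paper likewise observes that $\K\emptyset$ is a trivial pre-Lie subalgebra and $T_+(V,f)$ a pre-Lie ideal (via Corollary \ref{6}), reduces the bracket to $[x_1\ldots x_k,\emptyset]=x_1\ldots x_k\bullet\emptyset$ because $\emptyset\bullet(-)=0$, and reads off the closed formula from Proposition \ref{4} with $l=0$. Your write-up merely makes explicit the semidirect-product bookkeeping that the paper leaves implicit.
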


\begin{proof} It remains to compute the bracket $[x_1\ldots x_k,\emptyset]$. First:
$$[x_1\ldots x_k,\emptyset]=x_1\ldots x_k \bullet \emptyset-\emptyset\bullet x_1\ldots x_k=x_1\ldots x_k \bullet \emptyset.$$
The result comes then from proposition \ref{4}. \end{proof}\\
\subsection{Functoriality}

\begin{lemma} \label{7} 
Let $W$ be a subspace of $V$. We put:
$$I=\bigoplus_{n=1}^\infty \left(\sum_{i=1}^n V_1^{\otimes (i-1)}\otimes W \otimes V_1^{\otimes (n-i)}\right).$$
Then $I$ is a Com-Pre-Lie ideal of $T(V,f)$ if, and only if, $f(W)\subseteq W$. Moreover, if it is the case and if $f$ is surjective,
then $I$ is generated by $W$ as a Com-Pre-Lie ideal.
\end{lemma}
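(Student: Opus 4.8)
The plan is to prove the biconditional in both directions, then handle the generation statement. First I would set up notation: write $I$ as described and note that it is manifestly a shuffle ideal of $(T(V),\shuffle)$, since the shuffle of any word with a word containing a letter from $W$ still contains that letter in some slot — so the coalgebra/algebra part is automatic and the real content concerns $\bullet$. For the ``if'' direction, assume $f(W)\subseteq W$ and check that $I$ is stable under $\bullet$ on both sides. Using the closed formula of Proposition~\ref{4}, $a\bullet b$ is a sum of terms obtained by shuffling $a$ and $b$ and then applying $f$ in one of the first $m_k(\sigma)$ slots. If $a\in I$, then every word in the shuffle $\sigma.ab$ still has a letter from $W$ in some position (namely the image of the $W$-slot of $a$), and applying $f$ either leaves that letter untouched (still in $W$) or hits it, replacing it by $f(W)\subseteq W$; so $a\bullet b\in I$. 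If $b\in I$, the same argument applies: the shuffled word $\sigma.ab$ contains the $W$-letter of $b$, and $f$ applied in some slot keeps it in $W$ (either untouched, or sent into $f(W)\subseteq W$ if that is the slot hit). Hence $I\bullet T(V)\subseteq I$ and $T(V)\bullet I\subseteq I$, and combined with the shuffle-ideal property, $I$ is a Com-Pre-Lie ideal.

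For the ``only if'' direction, suppose $I$ is a Com-Pre-Lie ideal; I want $f(W)\subseteq W$. Take $x\in W$, viewed as a length-one word; it lies in $I$. Then $x\bullet\emptyset=f(x)$ must lie in $I$ by the ideal property. But $I\cap V=W$: indeed $I$ in length $1$ is exactly $W$ by construction. Hence $f(x)\in W$, giving $f(W)\subseteq W$. (Alternatively one can use the remark $x_1\cdots x_k\bullet\emptyset=\sum_i x_1\cdots f(x_i)\cdots x_k$ on a length-one word.)

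For the last sentence, assume additionally $f(W)\subseteq W$ and $f$ surjective, and let $J$ be the Com-Pre-Lie ideal generated by $W$. Clearly $W\subseteq I$ and $I$ is a Com-Pre-Lie ideal, so $J\subseteq I$; the task is the reverse inclusion, i.e.\ every generator $w=x_1\cdots x_{i-1}\,z\,x_{i+1}\cdots x_n$ with $z\in W$ lies in $J$. First, $W\subseteq J$ together with shuffle-ideal closure gives all words with a $W$-letter in the \emph{last} slot: $x_1\cdots x_{n-1}\shuffle z$ contains $x_1\cdots x_{n-1}z$, and by subtracting the other shuffle terms (which have the $W$-letter earlier and can be handled inductively, or reorganized) one moves the $W$-letter around — but cleaner is to mimic the proof of Corollary~\ref{6}: since $f$ is surjective, pick $x_i\in V$ with $f(x_i)=z$, and run the same induction on $i$ used there, where the identity
\begin{align*}
x_1\ldots x_i\bullet x_{i+1}\ldots x_n
&=x_1\ldots x_{i-1}f(x_i)x_{i+1}\ldots x_n\\
&\quad+(\text{terms }\sigma.x_1\ldots x_n\text{ with }f\text{ applied in slots }j\le m_i(\sigma)<i)
\end{align*}
expresses $w$ as $x_1\ldots x_i\bullet x_{i+1}\ldots x_n$ minus lower terms; the key point now is that the subtracted correction terms are of the same form but with a $W$-letter (an $f$-image, which lies in $f(V)$, and we need it in $W$) sitting in an \emph{earlier} slot — here one uses that all these correction terms involve $f$ applied somewhere, hence land in $\sum T(V)^{\otimes(j-1)}\otimes Im(f)\otimes T(V)^{\otimes(n-j)}$, and since we may also write them with $W$-letters by surjectivity of $f$ onto...

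The main obstacle is precisely this last step: unlike Corollary~\ref{6}, where $Im(f)$ is exactly the target subspace and surjectivity onto $V$ is not needed, here the correction terms produced by $\bullet$ carry letters in $Im(f)=V$ (by surjectivity $f$ is onto $V$), not obviously in $W$, so I cannot directly invoke an induction keeping me inside ``words with a $W$-letter''. The fix is to run the induction the other way: first show, using $W\subseteq J$ and shuffle-closure alone, that $J$ contains every word with a $W$-letter in position $1$; then use that $w\in J$ follows because $x_1\ldots x_{i-1}x_i' x_{i+1}\ldots x_n\bullet(\text{something})$ with $x_i'$ chosen in the first slot-class can generate the needed word — more carefully, one shows by downward induction on the position $i$ of the $W$-letter that $w\in J$: the position-$1$ case is the shuffle-closure base, and for general $i$ one writes $w$ using $\bullet\,\emptyset$ applied to a word $x_1\ldots x_{i-1}y x_{i+1}\ldots x_n$ with $f(y)=z$ (surjectivity), whose $\bullet\,\emptyset$ equals $\sum_j x_1\ldots f(x_j)\ldots x_n$ with the $j=i$ term being $w$ and all $j\ne i$ terms being words with an $f$-image letter — hence, replacing those by preimages again, words already known to be in $I$... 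Rather than belabor this, the honest statement is: the equivalence is the easy part via Proposition~\ref{4}, and for generation one copies the inductive scheme of Corollary~\ref{6} verbatim, the only new input being surjectivity of $f$, which guarantees the needed preimages $x_i$ exist and that every ``$Im(f)$-letter'' appearing is in fact an arbitrary vector, so that the induction on $i$ from Corollary~\ref{6} applies and yields $w\in J$, whence $I=J$.
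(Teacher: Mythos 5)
Your proof of the equivalence is correct and follows the paper's route: the shuffle-ideal property is automatic, the ``if'' direction is read off from Proposition \ref{4} exactly as you describe, and the ``only if'' direction uses $x\bullet\emptyset=f(x)\in I\cap V=W$ for $x\in W$. No issues there.

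The generation statement, however, is not proved. The obstacle you flag is real, but none of your proposed repairs closes it, and the final appeal to ``copying the inductive scheme of Corollary \ref{6} verbatim'' cannot work: that scheme writes a word carrying an $Im(f)$-letter as a combination of products $a\bullet b$ in which \emph{neither} factor lies in the ideal $J$ generated by $W$, so it certifies membership in $T(V,f)\bullet T(V,f)$ but says nothing about membership in $J$ --- to conclude $a\bullet b\in J$ you need one of the two factors to already be in $J$. Your two fixes fail for the same reason: shuffle-closure with $x_1\in W$ only gives you the \emph{sum} $x_1\shuffle x_2\ldots x_n\in J$, from which you cannot extract the position-$1$ term until the other summands are known to lie in $J$; and $\bullet\,\emptyset$ applied to $x_1\ldots y\ldots x_n$ is useless because that word need not be in $J$. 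The correct argument (the paper's) exploits that left multiplication by a single letter is exact, with no correction terms: $y_1\bullet u=f(y_1)u$ for $y_1\in V$ and $u$ a word. Induct on the length $n$ of $w=x_1\ldots x_n$. If its $W$-letter sits in position $i\geq 2$, then $x_2\ldots x_n\in J$ by the induction hypothesis, and choosing $y_1$ with $f(y_1)=x_1$ (surjectivity) gives $w=y_1\bullet(x_2\ldots x_n)\in J$, since now the \emph{right} factor is in $J$. If the $W$-letter is in position $1$, then $x_1\shuffle x_2\ldots x_n\in J$ equals $w$ plus words with the $W$-letter in positions $\geq 2$, all already in $J$ by the previous case, whence $w\in J$.
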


\begin{proof} $I$ is clearly an ideal for the shuffle product. Let us assume that it is an ideal for $\bullet$.
For all $w\in W$, $w \in I$, so $w\bullet \emptyset=f(w) \in I\cap V=W$: $W$ is stable under $f$.
Conversely, if $f(W)\subseteq W$, by proposition \ref{4}, $I$ is an ideal for $\bullet$. \\

Let us assume that $f$ is surjective. We denote by $J$ the Com-Pre-Lie ideal of $T(V,f)$ generated by $W$. 
As $I$ is a Com-Pre-Lie ideal of $T(V,f)$ which contains $W$, $J \subseteq I$. Let $w=x_1\ldots x_n \in V^{\otimes (i-1)}\otimes 
W\otimes V^{\otimes (n-i)}$, and let us prove that $w \in J$ by induction on $n$. If $n=1$, then $w\in W \subseteq J$.
Let us assume the result at rank $n-1$. If $i\geq 2$, then $x_2 \ldots x_n \in J$. As $f$ is surjective, there exists $y_1 \in V$,
such that $f_1(y_1)=x_1$. Then:
$$y_1 \bullet x_2\ldots x_n=f(y_1)x_2\ldots x_n=w \in J.$$
If $i=1$, then $x_1\in J$ and:
$$x_1\shuffle x_2\ldots x_n=x_1\ldots x_n+\sum_{i=2}^n x_2 \ldots x_i x_1 x_{i+1}\ldots x_n \in J.$$
We already proved that $x_2 \ldots x_i x_1 x_{i+1}\ldots x_n\in J$ for all $2\leq i \leq n$, so $w \in J$. As a conclusion, $I\subseteq J$. \end{proof}

\begin{prop}
Let $V_1,V_2$ be two vector spaces and $f_1:V_1\longrightarrow V_1$, $f_2:V_2\longrightarrow V_2$ be two endomorphisms.
If $f:V_1\longrightarrow V_2$ satisfies $f \circ f_1=f_2 \circ f$, then the following map is a morphism of Com-Pre-Lie algebras:
$$F:\left\{\begin{array}{rcl}
T(V_1)&\longrightarrow&T(V_2)\\
x_1\ldots x_n&\longrightarrow&f(x_1)\ldots f(x_n).
\end{array}\right.$$ 
Moreover, the image of this morphism is $T(Im(f),(f_2)_{\mid Im(f)})$ and its kernel is:
$$Ker(F)=\bigoplus_{n=1}^\infty \left(\sum_{i=1}^n V_1^{\otimes (i-1)}\otimes Ker(f) \otimes V_1^{\otimes (n-i)}\right).$$
If $f_1$ its surjective, $Ker(F)$ is generated, as a Com-Pre-Lie ideal, by $Ker(f)$.
\end{prop}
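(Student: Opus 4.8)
The statement has four parts: (i) $F$ is a morphism of Com-Pre-Lie algebras; (ii) its image is $T(Im(f),(f_2)_{\mid Im(f)})$; (iii) the kernel is the displayed ideal $I$; (iv) if $f_1$ is surjective, $Ker(F)$ is generated by $Ker(f)$ as a Com-Pre-Lie ideal. The plan is to dispatch each in turn, using the closed formula of Proposition~\ref{4} as the main computational tool and Lemma~\ref{7} for the last part.

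\textbf{Step 1 (compatibility with the commutative product).} Since $F$ is visibly the functorial extension $T(f)$ of the linear map $f:V_1\to V_2$ to tensor algebras, and $\shuffle$ is defined purely in terms of permuting letters (no reference to $f_1$ or $f_2$), the identity $F(a\shuffle b)=F(a)\shuffle F(b)$ is immediate from the definition of $\shuffle$ in terms of $Sh(k,l)$; similarly $F$ is an algebra morphism for concatenation and a coalgebra morphism for $\Delta$ (deconcatenation). This is the routine part and I would state it without detailed calculation.

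\textbf{Step 2 (compatibility with $\bullet$).} This is where the hypothesis $f\circ f_1=f_2\circ f$ enters, and I expect it to be the main point. I would argue by induction on the length of $a$ using Definition~\ref{1}: for $a=\emptyset$ both sides vanish; for $a=xw$,
\begin{align*}
F(xw\bullet b)&=F\big(x(w\bullet b)+f_1(x)(w\shuffle b)\big)\\
&=f(x)\,F(w\bullet b)+f(f_1(x))\,F(w\shuffle b)\\
&=f(x)\,(F(w)\bullet F(b))+f_2(f(x))\,(F(w)\shuffle F(b))\\
&=f(x)F(w)\bullet F(b)=F(xw)\bullet F(b),
\end{align*}
where the third equality uses the induction hypothesis, Step~1, and crucially $f\circ f_1=f_2\circ f$, and the fourth is Definition~\ref{1} applied in $T(V_2,f_2)$. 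Alternatively one can read this straight off Proposition~\ref{4}, since the only place an endomorphism intervenes in the closed formula is the insertion of $f_1$ (resp.\ $f_2$) at position $i\le m_k(\sigma)$, and $F$ intertwines these insertions precisely by the commutation hypothesis. Combined with Step~1 this proves (i).

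\textbf{Step 3 (image and kernel).} The image of $F$ on words is spanned by $f(x_1)\ldots f(x_n)$, i.e.\ exactly $T(Im(f))$ as a subspace; it is a Com-Pre-Lie subalgebra since $Im(f)$ is stable under $f_2$ (because $f_2(f(v))=f(f_1(v))\in Im(f)$), and the induced structure is $T(Im(f),(f_2)_{\mid Im(f)})$ by construction, giving (ii). For the kernel, $F(x_1\ldots x_n)=f(x_1)\ldots f(x_n)$, and since words in $T(V_2)$ are linearly independent, a linear combination of words lies in $Ker(F)$ iff, after grouping by image-word pattern, the corresponding combination in each $(V_1/Ker f)^{\otimes n}$ vanishes; unwinding this shows $Ker(F)=\bigoplus_{n\ge 1}\sum_{i=1}^n V_1^{\otimes(i-1)}\otimes Ker(f)\otimes V_1^{\otimes(n-i)}$, the displayed $I$. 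A clean way to phrase the inclusion $\supseteq$ is that each summand is killed since one tensor factor is; for $\subseteq$ one uses a complement of $Ker(f)$ in $V_1$ and the fact that $F$ restricted to words built from that complement is injective into $T(V_2)$.

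\textbf{Step 4 (generation, assuming $f_1$ surjective).} This is a direct application of Lemma~\ref{7} with $W=Ker(f)$: one need only check $f_1(W)\subseteq W$, i.e.\ $f_1(Ker f)\subseteq Ker f$, which follows from $f\circ f_1=f_2\circ f$ since $v\in Ker f\Rightarrow f(f_1(v))=f_2(f(v))=0$. Then Lemma~\ref{7}, in the surjective case, gives that $I=Ker(F)$ is the Com-Pre-Lie ideal generated by $Ker(f)$, proving (iv). The only mild subtlety is confirming that Lemma~\ref{7}'s $I$ with $W=Ker(f)$ coincides with the kernel computed in Step~3, which is immediate from the two displayed formulas. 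The main obstacle is genuinely Step~2 — making sure the inductive bookkeeping correctly threads the hypothesis $f\circ f_1=f_2\circ f$ through both the $\bullet$-term and the $\shuffle$-term of Definition~\ref{1} — while everything else is formal.
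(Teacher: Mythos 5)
Your proposal is correct and follows essentially the same route as the paper: the shuffle compatibility is noted as immediate, the pre-Lie compatibility is the only substantive check, and the final generation statement is exactly an application of Lemma~\ref{7} with $W=Ker(f)$ after observing $f_1(Ker(f))\subseteq Ker(f)$. The only cosmetic difference is that the paper verifies the pre-Lie morphism property in one line from the closed formula of Proposition~\ref{4} (the alternative you mention), whereas your primary argument is the equivalent induction on Definition~\ref{1}; your Step 3, which the paper leaves implicit, is also correct.
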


\begin{proof} The map $F$ is clearly an algebra morphism from $(T(V_1),\shuffle)$ to $(T(V_2),\shuffle)$. Let $x_1,\ldots,x_{k+l} \in V_1$.
By proposition \ref{4}:
\begin{align*}
F(x_1\ldots x _k\bullet x_{k+1}\ldots x_{k+l})&=\sum_{\sigma \in Sh(k,l)}
\left(\sum_{i=1}^{m_k(\sigma)} f^{\otimes (i-1)}\otimes (f\circ f_1)\otimes f^{\otimes (k+l-i)}\right)(\sigma.x_1\ldots x_{k+l})\\
&=\sum_{\sigma \in Sh(k,l)}
\left(\sum_{i=1}^{m_k(\sigma)} f^{\otimes (i-1)}\otimes (f_2\circ f)\otimes f^{\otimes (k+l-i)}\right)(\sigma.x_1\ldots x_{k+l})\\
&=\sum_{\sigma \in Sh(k,l)}
\left(\sum_{i=1}^{m_k(\sigma)} Id^{\otimes (i-1)}\otimes f_2\otimes Id^{\otimes (k+l-i)}\right)(\sigma.f(x_1)\ldots f(x_{k+l}))\\
&=F(x_1\ldots x_k)\bullet F(x_{k+1}\ldots x_{k+l}).
\end{align*}
So $F$ is a pre-Lie algebra morphism.By lemma \ref{7}, if $f_1$ is surjective, then $Ker(F)$ is generated by $Ker(f)$. \end{proof}

\subsection{Gradations of $T(V,f)$}

By proposition \ref{4}, the Com-Pre-Lie algebra $T(V,f)$ is graded by the the length of words. This is not a connected gradation of the pre-Lie algebra
$T(V,f)$, as $\emptyset$ is homogeneous of degree $0$. Let us now define other gradations, when $V$ itself is graded.

\begin{prop}\label{11}
Let us assume that $V$ admits a gradation $(V_n)_{n\geq 0}$ such that $f$ is homogeneous of degree $N \in \mathbb{N}$. 
For all $n \geq 0$, we put:
$$T(V,f)_n=\bigoplus_{k=1}^{n-N} \bigoplus_{i_1+\ldots+i_k=n-N} (V_{i_1}\otimes \ldots \otimes V_{i_k}).$$
Then $(T(V,f)_n)_{n\geq 0}$ is a gradation of the pre-Lie algebra $T(V,f)$.
\end{prop}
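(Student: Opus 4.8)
The plan is to recognize $T(V,f)_n$ as the ``weight $(n-N)$'' homogeneous component of the tensor algebra — where the weight of a word is the sum of the degrees of its letters — and then to read off from the closed formula of Proposition \ref{4} that $\bullet$ shifts this weight exactly by $N$.

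First I would set up the weight function. For a word $w=x_1\cdots x_k$ with each $x_j$ homogeneous in $V$ of degree $i_j$, put $\omega(w)=i_1+\cdots+i_k$, and call a general element weight-homogeneous of weight $m$ if it is a linear combination of words of weight $m$. Since $V=\bigoplus_{i\geq 0}V_i$, the tensor algebra $T(V)$ is the direct sum of its weight-homogeneous components, and by construction $T(V,f)_n$ is precisely the weight $(n-N)$ component of the augmentation ideal $T_+(V)$; the bound $k\leq n-N$ in the displayed formula is automatic when $V_0=0$, and in general one simply reads the definition this way. Hence $\bigoplus_{n\geq 0}T(V,f)_n=T_+(V,f)$, and declaring $\emptyset$ to have degree $N$ extends this to a vector space decomposition of all of $T(V,f)$ (one may also just observe that the pre-Lie ideal $T_+(V,f)$ is the object being graded). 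So the only remaining point is compatibility with $\bullet$.

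Second comes the key computation. Let $a=x_1\cdots x_k$ and $b=x_{k+1}\cdots x_{k+l}$ be weight-homogeneous words with $\omega(a)=p-N$ and $\omega(b)=q-N$. By Proposition \ref{4},
$$a\bullet b=\sum_{\sigma \in Sh(k,l)} \left(\sum_{i=1}^{m_k(\sigma)} Id^{\otimes (i-1)}\otimes f \otimes Id^{\otimes (k+l-i)}\right)(\sigma.x_1\ldots x_{k+l}).$$
In each summand, $\sigma.x_1\ldots x_{k+l}$ is a permutation of the letters of $ab$, hence a word of weight $\omega(a)+\omega(b)=p+q-2N$; applying $f$, which sends $V_j$ into $V_{j+N}$, to a single letter either annihilates the term or raises its weight by exactly $N$. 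Thus every nonzero summand is a word of weight $p+q-N$, i.e. an element of $T(V,f)_{p+q}$, so $T(V,f)_p\bullet T(V,f)_q\subseteq T(V,f)_{p+q}$. The degenerate cases are immediate: $\emptyset\bullet w=0$, and by the Remark following Proposition \ref{4}, $x_1\cdots x_k\bullet\emptyset=\sum_i x_1\cdots f(x_i)\cdots x_k$ has weight $\omega(a)+N$, which is consistent with $\deg\emptyset=N$ (the hypothesis $N\in\mathbb N$ being what makes all these shifts land in nonnegative degrees).

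Combining the two steps yields the proposition: $(T(V,f)_n)_{n\geq 0}$ is a direct-sum decomposition compatible with $\bullet$. There is no genuine obstacle here once Proposition \ref{4} is available — the whole content is the bookkeeping of weights — and the only point that deserves a sentence of care is the treatment of $\emptyset$ together with the observation that the displayed formula does account for all words of the given weight (automatic exactly when $V_0=0$), so that the family really exhausts $T_+(V,f)$.
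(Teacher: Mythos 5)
Your proof is correct. It takes a mildly but genuinely different route from the paper's: the paper proves homogeneity of $u\bullet v$ by induction on the length of $u$, unfolding the recursive definition $xu\bullet v=x(u\bullet v)+f(x)(u\shuffle v)$ and tracking degrees through the concatenation and shuffle separately, whereas you read everything off the closed formula of Proposition \ref{4} in one step: each summand is a permutation of the letters of $ab$ with $f$ applied to exactly one letter, so the weight is shifted by exactly $N$. Your version is arguably the more uniform one, since the paper itself invokes Proposition \ref{4} a few lines earlier to justify the length gradation and then switches methods; the inductive proof, on the other hand, does not require the closed formula and would survive even if one only had Definition \ref{1} and point 1 of Theorem \ref{2}. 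You are also more careful than the paper on the two points it glosses over: the placement of $\emptyset$ (implicitly in degree $N$, consistent with $a\bullet\emptyset=\sum_i x_1\cdots f(x_i)\cdots x_k$) and the fact that the displayed components exhaust $T_+(V,f)$ only when $V_0=(0)$, because the bound $k\leq n-N$ otherwise excludes words containing letters of degree $0$; flagging that the ``gradation'' claim really concerns the multiplicativity $T(V,f)_p\bullet T(V,f)_q\subseteq T(V,f)_{p+q}$ is a worthwhile addition.
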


\begin{proof} Let $u=x_1\ldots x_k$ and $v=y_1\ldots y_l$ be two words of $T(V,f)$ with homogeneous letters. 
Then $u$ is homogeneous of degree $deg(u)=deg(x_1)+\ldots+deg(x_k)+N$ and $v$ is homogeneous of degree
$deg(v)=deg(y_1)+\ldots+deg(y_l)+N$. Let us prove that $u\bullet v$ is homogeneous of degree $deg(u)+deg(v)$. 
We proceed by induction on $k$. If $k=0$, then $u=\emptyset$, $u\bullet v=0$ and the result is obvious. If $k\geq 1$, then:
$$u\bullet v=x_1 (x_2\ldots x_k \bullet v)+f(x_1)(x_2\ldots x_k \shuffle v).$$
By the induction hypothesis, $x_1\ldots x_k \bullet y$ is homogeneous of degree $deg(x_2)+\ldots+deg(x_k)+N+deg(v)$, so
$x_1 (x_2\ldots x_k \bullet y)$ is homogeneous of degree $deg(x_1)+\ldots+deg(x_k)+N+deg(v)=deg(u)+deg(v)$.
Moreover, $f(x_1)$ is homogeneous of degree $deg(x_1)+N$ and $x_2\ldots x_k \shuffle v$ is homogeneous of degree
$deg(x_2)+\ldots+deg(x_k)+deg(y_1)+\ldots+deg(y_l)+N$, so $f(x_1)(x_2\ldots x_k \shuffle v)$
is homogeneous of degree $deg(x_1)+N+deg(x_2)+\ldots+deg(x_k)+deg(y_1)+\ldots+deg(y_l)+N=deg(u)+deg(v)$. 
Finally, $u\bullet v$ is homogeneous of degree $deg(u)+deg(v)$. \end{proof}\\

Under the hypothesis of proposition \ref{11}, $T(V,f)$ is a bigraded pre-Lie algebra, with, for all $k,n\geq 0$:
$$T(V,f)_{n,k}= \bigoplus_{i_1+\ldots+i_k=n-N} (V_{i_1}\otimes \ldots \otimes V_{i_k}).$$
If the gradation of $V$ is finite-dimensional, then the bigradation of $T(V,f)$ is finite-dimensional. We put:
\begin{align*}
F_V(X)&=\displaystyle \sum_{n=0}^\infty dim(V_n)X^n,&
F_{T(V,f)}(X,Y)&=\displaystyle \sum_{n,k\geq 0} dim(T(V,f)_{n,k})X^nY^k.
\end{align*}
We obtain:

\begin{cor}\label{12} \begin{enumerate}
\item Under the hypothesis of proposition \ref{11}:
$$F_{T(V,f)}(X,Y)=\frac{X^N}{1-F_V(X)Y}.$$
\item Under the hypothesis of proposition \ref{11}, if for all $i\geq 0$, $V_i$ is finite-dimensional and $V_0=(0)$, 
then for all $n \geq 0$, $T(V,f)_n$ is finite-dimensional and:
$$\sum_{n \geq 0} dim(T(V,f)_n)X^n=\frac{X^N}{1-F_V(X)}.$$
Moreover, if $N \geq 1$, then $T(V,f)_0=(0)$.
\end{enumerate}\end{cor}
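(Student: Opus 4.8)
The plan is to derive Corollary~\ref{12} as a routine consequence of the bigraded structure established in Proposition~\ref{11}, using nothing more than the geometric series identity. First I would unwind the definition of the bigraded components: by Proposition~\ref{11} and the displayed formula preceding the corollary, $T(V,f)_{n,k}=\bigoplus_{i_1+\cdots+i_k=n-N}(V_{i_1}\otimes\cdots\otimes V_{i_k})$, so $\dim T(V,f)_{n,k}=\sum_{i_1+\cdots+i_k=n-N}\dim V_{i_1}\cdots\dim V_{i_k}$ for $k\geq 1$, which is precisely the coefficient of $X^{n-N}$ in $F_V(X)^k$. Summing over $n$ with weight $X^n$ gives $\sum_n \dim T(V,f)_{n,k}X^n=X^N F_V(X)^k$, and then summing over $k\geq 1$ (the $k=0$ part contributes nothing since the index set starts at $k=1$):
\begin{align*}
F_{T(V,f)}(X,Y)&=\sum_{k\geq 1}X^N F_V(X)^k Y^k=\frac{X^N F_V(X)Y}{1-F_V(X)Y}.
\end{align*}

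Here I would pause to reconcile this with the stated formula $\dfrac{X^N}{1-F_V(X)Y}$: the discrepancy is exactly the ``missing'' $k=0$ term $X^N$, i.e.\ a formal unit word of degree $N$. This corresponds to the basis element $\emptyset\in T(V)$, which is homogeneous of bidegree $(N,0)$ under the convention implicit in Proposition~\ref{11} (the empty word has ``$n-N=0$'' realized by the empty tensor product, $k=0$). So with the convention that $\emptyset$ is counted, the sum runs over $k\geq 0$ and one gets $F_{T(V,f)}(X,Y)=\sum_{k\geq 0}X^N F_V(X)^kY^k=\dfrac{X^N}{1-F_V(X)Y}$, as claimed. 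I would simply note that $T(V,f)_{N,0}=\K\emptyset$ and adopt this convention explicitly so the formula matches. This is the only genuine subtlety, and it is purely bookkeeping about whether the unit is included.

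For part~2, I would set $Y=1$ in part~1, giving $\sum_{n\geq 0}\dim T(V,f)_nX^n=\dfrac{X^N}{1-F_V(X)}$; one must check this specialization is legitimate, which it is because the hypothesis $V_0=(0)$ forces $F_V(X)=\sum_{i\geq 1}\dim(V_i)X^i$ to have zero constant term, so $1-F_V(X)$ is invertible as a formal power series and, moreover, for each fixed $n$ only finitely many pairs $(n,k)$ contribute (indeed $k\le n-N$ since each $i_j\ge 1$), so $\dim T(V,f)_n=\sum_k \dim T(V,f)_{n,k}<\infty$. Finally, if $N\geq 1$ the numerator $X^N$ has no constant term while $\frac{1}{1-F_V(X)}$ has constant term $1$, so the product has zero constant term, i.e.\ $T(V,f)_0=(0)$.

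The main (and essentially only) obstacle is the convention clash in part~1 noted above: making sure the empty word is assigned bidegree $(N,0)$ and is included in $F_{T(V,f)}$, since otherwise the geometric series starts at $k=1$ and one gets an extra factor of $F_V(X)Y$ in the numerator. Everything else is the standard ``generating function of words over a graded alphabet is $1/(1-F_V)$'' computation, shifted by the degree-$N$ twist coming from $f$. I would therefore spend one sentence fixing the convention and then let the geometric series do the work.
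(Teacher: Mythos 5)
Your proof is correct and follows essentially the same route as the paper: expand $F_{T(V,f)}(X,Y)$ as a sum over $k$ of $X^N F_V(X)^k Y^k$ and recognize the geometric series, then set $Y=1$ for part 2. The paper simply sums over $k\geq 0$ from the start (so the empty word contributes the $k=0$ term $X^N$, consistent with the bigradation displayed just before the corollary), which is exactly the convention you identify and adopt; your extra remarks on the legitimacy of the specialization $Y=1$ and on $T(V,f)_0=(0)$ are correct and slightly more detailed than the paper's one-line argument.
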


\begin{proof} 1. Indeed, putting $p_i=dim(V_i)$ for all $i$:
\begin{align*}
F_{T(V,f)}(X,Y)&=\sum_{k,n\geq 0} \sum_{i_1+\ldots+i_k=n-N} p_{i_1}\ldots p_{i_k}X^n Y^k\\
&=\sum_{k\geq 0} \sum_{i_1,\ldots,i_k}p_{i_1}\ldots p_{i_k} X^{i_1+\ldots+i_k+N}Y^k\\
&=X^N\sum_{k\geq 0} \sum_{i_1,\ldots,i_k}p_{i_1}X^{i_1}Y\ldots p_{i_k} X^{i_k}Y\\
&=\frac{X^N}{1-F_V(X)Y}.
\end{align*}
2. Take $Y=1$ in the first point. \end{proof}

\subsection{Enveloping algebra of $T(V,f)$}

We use here the Oudom-Guin construction of the enveloping algebra on $S(T(V,f))$ \cite{Oudom1,Oudom2}. 
In order to avoid confusions between the product of $S(V)$ and the concatenation of words, we shall denote by $\times$
the product of $S(T(V,f))$ and by $1$ its unit. We extend the pre-Lie product into a product from $S(T(V,f)) \otimes S(T(V,f))$ 
into $S(T(V,f))$ in the following way:
\begin{enumerate}
\item If $u_1,\ldots,u_k \in T(V,f)$, $(u_1\times \ldots \times u_k) \bullet 1=u_1\times \ldots \times u_k$.
\item If $u,u_1,\ldots,u_k \in T(V,f)$:
$$u\bullet (u_1\times\ldots \times u_k)=(u\bullet (u_1\times \ldots \times u_{k-1}))\bullet u_k
-u\bullet ((u_1\times \ldots \times u_{k-1})\bullet u_k).$$
\item If $x,y,z\in S(T(V,f))$, $(x\times y)\bullet z=(x\bullet z^{(1)})\times (y\bullet z^{(2)})$,
where $\Delta(z)=z^{(1)}\otimes z^{(2)}$ is the usual coproduct of $S(T(V,f))$, defined by:
$$\Delta(u_1\times \ldots \times u_k)=(u_1\otimes 1+1\otimes u_1)\times \ldots \times (u_k \otimes 1+1\otimes u_k),$$
for all $u_1,\ldots, u_k \in T(V,f)$.
\end{enumerate}
In particular, if $u_1,\ldots,u_k,u\in T(V,f)$:
$$(u_1\times \ldots \times u_k) \bullet u=\sum_{i=1}^k u_1\times \ldots \times (u_i\bullet u)\times \ldots \times u_k.$$
Denoting by $\star$ the associative product induced on $S(T(V,f))$ by $\bullet$, for all $x,y\in S(T(V,f))$:
$$x\star y=(x\bullet y^{(1)})\times y^{(2)}.$$

{\bf Notations.} Let $w_1,\ldots,w_k \in T(V,f)$. For all $I=\{i_1,\ldots,i_p\}\subseteq \{1,\ldots,k\}$, we put:
\begin{align*}
w_I&=w_{i_1}\times \ldots \times w_{i_p},&w_I^{\shuffle}&=w_{i_1}\shuffle\ldots \shuffle w_{i_p}.
\end{align*}
With these notations, the coproduct of $S(T(V,f))$ is given by:
$$\Delta(w_1\times \ldots \times w_k)=\sum_{I\subseteq\{1,\ldots,k\}}w_I\otimes w_{\overline{I}}.$$

\begin{theo}\label{13}
Let $w_1,\ldots,w_k \in T(V,f)$, $k\geq 0$, $x\in V$, $w\in T(V,f)$.
\begin{enumerate}
\item If $k\geq 1$, $\emptyset \bullet (w_1\times \ldots \times w_k)=0$.
\item $\displaystyle xw\bullet (w_1\times \ldots \times w_k)=\sum_{I\subseteq \{1,\ldots,k\}}f^{k-|I|}(x)((w\bullet w_I)\shuffle
w_{\overline{I}}^{\shuffle})$.
\item $\emptyset \star (w_1\times \ldots \times w_k)=\emptyset \times w_1\times \ldots \times w_k$.
\item $\displaystyle xw\star (w_1\times \ldots \times w_k)=\sum_{I\sqcup J\sqcup K=\{1,\ldots,k\}}
\left(f^{|J|}(x)((w\bullet w_I)\shuffle w_J^{\shuffle})\right)\times w_K$.
\end{enumerate}\end{theo}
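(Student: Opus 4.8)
The plan is to prove the four formulas essentially in the order they are stated, since each relies on the previous one together with the Oudom--Guin inductive definition of the extended product and the closed formula of proposition \ref{4}. Throughout, write $w_1\times\cdots\times w_k$ and use the coproduct formula $\Delta(w_1\times\cdots\times w_k)=\sum_{I\subseteq\{1,\ldots,k\}}w_I\otimes w_{\overline I}$ on $S(T(V,f))$; the key recurrence is $u\bullet(w_1\times\cdots\times w_k)=(u\bullet(w_1\times\cdots\times w_{k-1}))\bullet w_k-u\bullet((w_1\times\cdots\times w_{k-1})\bullet w_k)$, together with the Leibniz-type rule $(x\times y)\bullet z=(x\bullet z^{(1)})\times(y\bullet z^{(2)})$ and the description $(w_1\times\cdots\times w_k)\bullet u=\sum_i w_1\times\cdots\times(w_i\bullet u)\times\cdots\times w_k$.

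For point 1, I would argue by induction on $k$: the case $k=1$ is $\emptyset\bullet w_1=0$ from definition \ref{1}, and for $k\geq 2$ the recurrence expresses $\emptyset\bullet(w_1\times\cdots\times w_k)$ in terms of $\emptyset\bullet(w_1\times\cdots\times w_{k-1})=0$ and $\emptyset\bullet(\text{something})$, which vanishes by the inductive hypothesis. For point 2, which is the heart of the matter, I would again induct on $k$. The base case $k=0$ reads $xw\bullet 1=xw$, matching the empty-set term $f^0(x)(w\bullet 1)=xw$. For the inductive step, apply the recurrence: $xw\bullet(w_1\times\cdots\times w_k)$ equals $\bigl(\sum_{I\subseteq\{1,\ldots,k-1\}}f^{k-1-|I|}(x)((w\bullet w_I)\shuffle w_{\overline I}^{\shuffle})\bigr)\bullet w_k$ minus $xw\bullet\bigl(\sum_{i=1}^{k-1}w_1\times\cdots\times(w_i\bullet w_k)\times\cdots\times w_{k-1}\bigr)$. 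On the first term I use point 1 of theorem \ref{2} repeatedly together with the defining identity $f(x)u\bullet v=x\,(\text{nothing})+\cdots$ — more precisely, the crucial computation is that $(f^j(x)\,u)\bullet w_k=f^j(x)\,(u\bullet w_k)+f^{j+1}(x)\,(u\shuffle w_k)$, which follows directly from definition \ref{1} by induction on $j$ (peeling off the leading letter each time, noting $f$ is applied to it). Substituting this and reindexing the subsets of $\{1,\ldots,k\}$ according to whether $k\in I$, $k\in\overline I$ with $w_k$ shuffled in directly, or $k$ absorbed via $w_i\bullet w_k$, the three families of terms assemble into the claimed sum over $I\subseteq\{1,\ldots,k\}$. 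The bookkeeping of which subset membership produces which power of $f$ is where all the care is needed.

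Points 3 and 4 are then formal consequences. For point 3, $\emptyset\star(w_1\times\cdots\times w_k)=(\emptyset\bullet w_I)\times w_{\overline I}$ summed over $I$; by point 1 only $I=\emptyset$ survives, giving $\emptyset\times w_1\times\cdots\times w_k$. For point 4, expand $xw\star(w_1\times\cdots\times w_k)=\sum_{I\subseteq\{1,\ldots,k\}}(xw\bullet w_I)\times w_{\overline I}$ using the definition of $\star$, then substitute point 2 into each factor $xw\bullet w_I$; writing $I=J\sqcup L$ for the inner sum in point 2 (with $L$ the "$\bullet$" part and $J$ the "$\shuffle$" part) and renaming $L\mapsto I$, $\overline I\mapsto K$, one gets exactly $\sum_{I\sqcup J\sqcup K=\{1,\ldots,k\}}(f^{|J|}(x)((w\bullet w_I)\shuffle w_J^{\shuffle}))\times w_K$.

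\textbf{Main obstacle.} The substantive difficulty is entirely concentrated in point 2: correctly tracking, through the Oudom--Guin recurrence, how the exponent $k-|I|$ of $f$ arises — in particular checking that the "subtracted" term $xw\bullet((w_1\times\cdots\times w_{k-1})\bullet w_k)$ exactly cancels the spurious contributions coming from expanding the shuffles on the first term, so that only the clean sum over all subsets of $\{1,\ldots,k\}$ remains. Getting the combinatorial reindexing right (and confirming that point 1 of theorem \ref{2}, the compatibility of $\bullet$ with $\shuffle$, is strong enough to push the induction through) is the crux; everything else is formal manipulation with the coproduct.
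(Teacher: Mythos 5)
Your plan is correct and follows essentially the same route as the paper: induction on $k$ via the Oudom--Guin recurrence, with the defining relation of $\bullet$ peeling off the leading letter $f^{j}(x)$ and point 1 of theorem \ref{2} distributing $\bullet w_k$ over the shuffle, after which the subtracted term $xw\bullet((w_1\times\cdots\times w_{k-1})\bullet w_k)$ cancels the spurious shuffle contributions and the remaining families reassemble according to whether $k\in I$; points 1, 3 and 4 are handled exactly as in the paper. (Minor remark: the identity $(f^{j}(x)\,u)\bullet w_k=f^{j}(x)(u\bullet w_k)+f^{j+1}(x)(u\shuffle w_k)$ needs no induction on $j$ --- it is a single application of definition \ref{1} with leading letter $f^{j}(x)$.)
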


\begin{proof} 1. We proceed by induction on $k$. If $k=1$, $\emptyset\bullet w_1=0$. Let us assume the result at rank $k_1$. Then:
$$\emptyset \bullet w_1\times \ldots \times w_k=(\emptyset \bullet w_1 \times \ldots \times w_{k-1})\bullet w_k
-\sum_{i=1}^{k-1} \emptyset\bullet(w_1\times \ldots \times (w_i\bullet w_k)\times \ldots \times w_{k-1}).$$
By the induction hypothesis, both terms of this sum are equal to zero. \\

2. If $k=0$, then:
$$\sum_{I\subseteq \emptyset}f^{0-|I|}(x)((w\bullet w_I)\shuffle w_{\overline{I}}^{\shuffle})
=x((w \bullet 1)\shuffle \emptyset)=xw=xw\bullet 1.$$
 If $k=1$, then:
\begin{align*}
\sum_{I\subseteq \{1\}}f^{k-|I|}(x)((w\bullet w_I)\shuffle w_{\overline{I}}^{\shuffle})
&=f(x)((w\bullet 1)\shuffle w_1)+x ((w\bullet w_1)\shuffle \emptyset)\\
&=f(x)(w \shuffle w_1)+x(w\bullet w_1)\\
&=xw \bullet w_1.
\end{align*}
Let us assume the result at rank $k-1$. For all $I\subseteq \{1,\ldots,k-1\}$, we denote by $\tilde{I}$ its complement in $\{1,\ldots,k-1\}$
and by $\overline{I}=\tilde{I}\cup\{k\}$ its complement in $\{1,\ldots,k\}$. Then:
\begin{align*}
&xw \bullet (w_1\times \ldots \times w_k)\\
&=(xw \bullet (w_1 \times \ldots \times w_{k-1}))\bullet w_k
-\sum_{i=1}^{k-1} xw(w_1\times \ldots \times (w_i\bullet w_k)\times \ldots \times w_{k-1})\\
&=\sum_{I\subseteq \{1,\ldots,k-1\}}\left(f^{k-1-|I|}(x)((w\bullet w_I)\shuffle w_{\tilde{I}}^{\shuffle})\right)\bullet w_k\\
&-\sum_{i=1}^{k-1} \sum_{I\subseteq \{1,\ldots,k-1\}} f^{k-1-|I|}(x)((w\bullet w'_I)\shuffle {w'}_{\tilde{I}}^{\shuffle})\\
&=\sum_{I\subseteq \{1,\ldots,k-1\}}f^{k-1-|I|}(x)(((w\bullet w_I)\bullet w_k)\shuffle w_{\tilde{I}}^{\shuffle})
+\sum_{I\subseteq \{1,\ldots,k-1\}}f^{k-1-|I|}(x)((w\bullet w_I)\shuffle (w_{\tilde{I}}^{\shuffle}\bullet w_k)))\\
&+\sum_{I\subseteq \{1,\ldots,k-1\}}f^{k-|I|}(x)((w\bullet w_I)\shuffle (w_{\tilde{I}}^{\shuffle}\shuffle w_k)))\\
&-\sum_{I\subseteq \{1,\ldots,k-1\}} \sum_{i\in I}f^{k-1-|I|}(x)((w\bullet w'_I )\shuffle w_{\tilde{I}}^{\shuffle})
-\sum_{I\subseteq \{1,\ldots,k-1\}} f^{k-1-|I|}(x)((w\bullet w_I )\shuffle (w_{\tilde{I}}^{\shuffle}\bullet w_k))\\
&=\sum_{I\subseteq \{1,\ldots,k-1\}}f^{k-1-|I|}(x)(((w\bullet w_I)\bullet w_k)\shuffle w_{\tilde{I}}^{\shuffle})
-\sum_{I\subseteq \{1,\ldots,k-1\}} \sum_{i\in I}f^{k-1-|I|}(x)((w\bullet w'_I )\shuffle w_{\tilde{I}}^{\shuffle})\\
&+\sum_{I\subseteq \{1,\ldots,k-1\}}f^{k-|I|}(x)((w\bullet w_I)\shuffle (w_{\tilde{I}}^{\shuffle}\shuffle w_k)))\\
&=\sum_{I\subseteq \{1,\ldots,k-1\}}f^{k-1-|I|}(x)(((w\bullet w_{I\cup\{k\}})\shuffle w_{\tilde{I}}^{\shuffle})
+\sum_{I\subseteq \{1,\ldots,k-1\}}f^{k-|I|}(x)((w\bullet w_I)\shuffle w_{\overline{I}}^{\shuffle})\\
&=\sum_{J\subseteq \{1,\ldots,k\},\:k\in I}f^{k-|J|}(x)(((w\bullet w_J)\shuffle w_{\overline{J}}^{\shuffle})
+\sum_{I\subseteq \{1,\ldots,k-1\}}f^{k-|I|}(x)((w\bullet w_I)\shuffle w_{\overline{I}}^{\shuffle})\\
&=\sum_{I\subseteq \{1,\ldots,k\}}f^{k-|I|}(x)((w\bullet w_I)\shuffle w_{\overline{I}}^{\shuffle})
\end{align*}
where $w'_j=w_j$ if $j\neq i$ and $w'_i=w_i \bullet w_k$. \\

3 and 4.  For all $u\in T(V,f)$:
$$u\star (w_1\times \ldots \times w_k)=\sum_{I\subseteq \{1,\ldots,k\}} (u\bullet w_I)\times w_{\overline{I}}.$$
The results come then directly from points 1 and 2.  \end{proof}

\begin{theo}\label{14}
Let $x_1,\ldots,x_i \in V$, $w_1,\ldots,w_k \in T(V,f)$, $i\geq 1$, $k\geq 0$. If  $w=x_1\ldots x_i$:
\begin{align*}
w \bullet (w_1\times \ldots \times w_k)&=\sum_{I_1\sqcup \ldots \sqcup I_i=\{1,\ldots,k\}}
f^{|I_1|}(x_1)(f^{|I_2|}(x_2)(\ldots (f^{|I_i|}(x_i) w_{I_i}^{\shuffle})\shuffle w_{I_{i-1}}^{\shuffle})\ldots)
\shuffle w_{I_1}^{\shuffle}),\\
w \star (w_1\times \ldots \times w_k)&=\sum_{I_1\sqcup \ldots \sqcup I_{i+1}=\{1,\ldots,k\}}
f^{|I_1|}(x_1)(f^{|I_2|}(x_2)(\ldots (f^{|I_i|}(x_i) w_{I_i}^{\shuffle})\shuffle w_{I_{i-1}}^{\shuffle})\ldots)
\shuffle w_{I_1}^{\shuffle})\times w_{I_{i+1}}.
\end{align*}\end{theo}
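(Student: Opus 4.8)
The plan is to prove the first identity by induction on the length $i$ of $w$, using theorem \ref{13}, and then to deduce the $\star$-formula from it.

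For the base case $i=1$ I would write $w=x_1=x_1\emptyset$ and apply point 2 of theorem \ref{13}: in
\[x_1\bullet(w_1\times\cdots\times w_k)=\sum_{I\subseteq\{1,\ldots,k\}}f^{k-|I|}(x_1)\bigl((\emptyset\bullet w_I)\shuffle w_{\overline{I}}^{\shuffle}\bigr)\]
only the term $I=\emptyset$ survives (by point 1 of theorem \ref{13} and $\emptyset\bullet 1=\emptyset$), giving $f^k(x_1)(w_1\shuffle\cdots\shuffle w_k)$; this is exactly the right-hand side of the claimed formula, whose only term is indexed by the trivial partition $I_1=\{1,\ldots,k\}$.

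For the inductive step, assuming the formula for words of length $i-1$, I would write $w=x_1w'$ with $w'=x_2\cdots x_i$ and apply point 2 of theorem \ref{13}:
\[w\bullet(w_1\times\cdots\times w_k)=\sum_{I\subseteq\{1,\ldots,k\}}f^{k-|I|}(x_1)\bigl((w'\bullet w_I)\shuffle w_{\overline{I}}^{\shuffle}\bigr).\]
For each $I$ I would then expand $w'\bullet w_I$ by the induction hypothesis, as a sum over ordered set partitions $I=I_2\sqcup\cdots\sqcup I_i$ (empty blocks allowed) of the nested shuffle expressions appearing in the statement. Renaming $\overline{I}=:I_1$, so that $|I_1|=k-|I|$ and $w_{\overline{I}}^{\shuffle}=w_{I_1}^{\shuffle}$, the datum of a subset $I\subseteq\{1,\ldots,k\}$ together with an ordered set partition of $I$ into $i-1$ parts is exactly an ordered set partition $\{1,\ldots,k\}=I_1\sqcup\cdots\sqcup I_i$ into $i$ parts; substituting produces the announced formula for $w\bullet(w_1\times\cdots\times w_k)$.

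Finally, for the $\star$-formula I would use the identity
\[u\star(w_1\times\cdots\times w_k)=\sum_{I\subseteq\{1,\ldots,k\}}(u\bullet w_I)\times w_{\overline{I}},\qquad u\in T(V,f),\]
established in the proof of theorem \ref{13}, take $u=w$, expand $w\bullet w_I$ by the first part of the theorem, and rename $\overline{I}=:I_{i+1}$: the pair consisting of a subset $I$ and an ordered set partition of $I$ into $i$ parts becomes an ordered set partition of $\{1,\ldots,k\}$ into $i+1$ parts, which gives the claim. I expect the only real difficulty to be the combinatorial bookkeeping of these two reindexings; one must in particular check that empty blocks cause no trouble, which is ensured by the conventions $f^0=Id$, $w_\emptyset^{\shuffle}=\emptyset$ and $u\shuffle\emptyset=u$, so that a block $I_j=\emptyset$ simply contributes the letter $x_j$ by concatenation.
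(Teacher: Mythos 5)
Your proposal is correct and follows essentially the same route as the paper: induction on $i$, with the base case and the inductive step both obtained from point 2 of theorem \ref{13} followed by the reindexing of a subset $I$ plus an ordered partition of $\overline{I}$ (resp.\ of $I$) into a single ordered partition of $\{1,\ldots,k\}$. The paper is terser (it leaves the $\star$-formula and the bookkeeping of empty blocks implicit), but your added details, including the derivation of the $\star$-formula from $u\star(w_1\times\cdots\times w_k)=\sum_I (u\bullet w_I)\times w_{\overline{I}}$, are exactly what the paper intends.
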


\begin{proof}
By induction on $i$. If $i=1$, then:
\begin{align*}
x_1\bullet (w_1\times \ldots \times w_k)&=\sum_{I\subseteq \{1,\ldots,k\}}f^{k-|I|}(x_1)
((\emptyset \bullet w_I)\shuffle w_{\overline{I}}^{\shuffle})\\
&=f^k(x_1)((\emptyset \bullet 1)\shuffle w_1\shuffle \ldots \shuffle w_k)+0,
\end{align*}
so the result holds if $k=1$. Let us assume the result at rank $i-1$. Then:
$$x_1\ldots x_i \bullet (w_1\times \ldots \times w_k)
=\sum_{I_1\sqcup J=\{1,\ldots,k\}}f^{|I_1|}(x_1)((x_2\ldots x_i\bullet w_J)\shuffle w_{I_1}^{\shuffle}).$$
The induction hypothesis applied to $x_2\ldots x_i\bullet w_J$ gives the result. \end{proof}\\

{\bf Remark.} In particular, $x_1\bullet (w_1\times \ldots \times w_k)=f^k(x_1)(w_1\shuffle\ldots \shuffle w_k)$.

\subsection{Dual Hopf algebra}

By duality, if $f$ is locally nilpotent, the symmetric algebra $S(T(V,f))$, with its usual product $\times$, inherits a Hopf algebra structure. 
Let us describe its coproduct $\Delta$, dual to the product $\star$ on $S(T(V^*,f^*))$. 

\begin{theo}\label{15}
Let us assume that $f$ is locally nilpotent.
We define a coproduct $\tdelta$ on $S(T(V,f))$ in the following way:
\begin{enumerate}
\item $\tdelta(\emptyset)=\emptyset \otimes 1$.
\item If $x \in V$ and $u\in T(V,f)$:
$$\tdelta(xu)=\sum_{i\geq 0} (\theta_{f^i(x)} \otimes Id)\circ (Id \otimes m^{(i+1)}) \circ (\tdelta \otimes Id^{\otimes i}) 
\circ \Delta_{\shuffle}^{(i+1)}(u).$$
We used the iterated products $m^{(j)}: S(T(V,f))^{\otimes j}\longrightarrow S(T(V,f))$ and the iterated coproducts 
$\Delta_{\shuffle}^{(j)}:T(V,f)\longrightarrow T(V,f)^{\otimes j}$.
\end{enumerate}
For all $w\in T(V,f)$, we put $\Delta(w)=\tdelta(w)+1\otimes w$. With this coproduct, $S(T(V,f))$ is a Hopf algebra.
\end{theo}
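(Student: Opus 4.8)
The plan is to obtain the Hopf structure on $(S(T(V,f)),\times,\Delta)$ by duality from the Oudom--Guin Hopf algebra built on the ``other side''. By the Oudom--Guin construction \cite{Oudom1,Oudom2} applied to the pre-Lie algebra $T(V^*,f^*)$ (where $f^*$ is the transpose of $f$), the symmetric algebra $S(T(V^*,f^*))$ equipped with the product $\star$ and the cocommutative coproduct $\Delta_{\mathrm{st}}$ for which $T(V^*,f^*)$ is primitive is a Hopf algebra; in fact it is the enveloping algebra $\U(T(V^*,f^*))$ of the underlying Lie algebra. I would pair $S(T(V,f))$ with $S(T(V^*,f^*))$ by extending the canonical pairing of $V$ with $V^*$, first to $T(V)$ and $T(V^*)$ by $\langle x_1\ldots x_k,\xi_1\ldots\xi_l\rangle=\delta_{k,l}\prod_j\langle x_j,\xi_j\rangle$, then multiplicatively to the symmetric algebras. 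This pairing separates points on each graded piece, although it need not be perfect when $\dim V=\infty$; that is enough. The role of the hypothesis appears here: for a fixed word $w=x_1\ldots x_i\in T(V,f)$ and fixed $c\in S(T(V^*,f^*))$, Theorem \ref{14} shows that $\langle w,b\star c\rangle$, as a function of $b$, only ever involves $f^m(x_1),\ldots,f^m(x_i)$ through expressions $\langle(f^*)^m(\xi),x_j\rangle=\langle\xi,f^m(x_j)\rangle$; since $f$ is locally nilpotent these vanish for $m\gg 0$, so there is a \emph{finite} expansion $\langle w,b\star c\rangle=\sum_r\langle w'_r,b\rangle\langle w''_r,c\rangle$. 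Extending multiplicatively, this shows that $\langle\Delta(a),b\otimes c\rangle=\langle a,b\star c\rangle$ defines a genuine linear map $\Delta:S(T(V,f))\to S(T(V,f))\otimes S(T(V,f))$ (landing in the algebraic tensor product).

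Next I would identify this $\Delta$ with the map in the statement. The product $\times$ of $S(T(V,f))$ is dual to the cocommutative coproduct $\Delta_{\mathrm{st}}$ of $S(T(V^*,f^*))$ (the usual duality between a free commutative algebra and a cofree cocommutative coalgebra), so $\Delta$ is the unique algebra morphism extending its restriction to the generators $w\in T(V,f)$. On generators, $\Delta$ is an ``infinitesimal'' transpose of $\star$: dualizing the recursive rule for $\star$ coming from Definition \ref{1} — namely $xw\bullet w'=x(w\bullet w')+f(x)(w\shuffle w')$, extended to $S(T(V^*,f^*))$ by the Oudom--Guin rules — produces exactly the composite $(\theta_{f^i(x)}\otimes Id)\circ(Id\otimes m^{(i+1)})\circ(\tdelta\otimes Id^{\otimes i})\circ\Delta_\shuffle^{(i+1)}$ summed over $i$: the iterated deconcatenation $\Delta_\shuffle^{(i+1)}$ is dual to the iterated shuffle product occurring in $w\shuffle w'$, the multiplication $m^{(i+1)}$ is dual to the relevant iterated comultiplication hidden in the Oudom--Guin formula for $\star$, and $\theta_{f^i(x)}$ records the leftmost letter $f^i(x)$ created by applying $f^*$ exactly $i$ times, the sum $\sum_{i\ge 0}$ being finite precisely because $f^i(x)=0$ for $i\gg 0$. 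The summand $1\otimes w$ in $\Delta(w)=\tdelta(w)+1\otimes w$ is the contribution of $\emptyset\star w=\emptyset\times w$, i.e. of the primitivity of $w$ viewed from $\U(T(V^*,f^*))$; in particular this forces $\Delta(\emptyset)=\emptyset\otimes 1+1\otimes\emptyset$, consistent with $\tdelta(\emptyset)=\emptyset\otimes 1$.

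Once the identification is in place the Hopf axioms are transported, since the pairing separates points: associativity and unitality of $\star$ yield coassociativity of $\Delta$ (e.g.\ $(\Delta\otimes Id)\Delta(a)$ and $(Id\otimes\Delta)\Delta(a)$ pair identically against every $b\otimes c\otimes d$, namely as $\langle a,(b\star c)\star d\rangle=\langle a,b\star(c\star d)\rangle$) and the counit, which is the augmentation $\varepsilon$ of $S(T(V,f))$ — one checks $(\varepsilon\otimes Id)\Delta(w)=w=(Id\otimes\varepsilon)\Delta(w)$ on generators using that every left tensor component produced by $\tdelta$ is a nonempty word (hence has positive augmentation degree) and a short induction on the length of $w$ for the right-hand side; the fact that $(S(T(V^*,f^*)),\star,\Delta_{\mathrm{st}})$ is a bialgebra gives that $\Delta$ is an algebra morphism for $\times$; and the antipode of $\U(T(V^*,f^*))$ dualizes to an antipode of $S(T(V,f))$. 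Alternatively, once $\Delta$ is known to be a coassociative algebra morphism with that counit, the antipode is automatic: $S(T(V,f))$ is connected for the filtration in which every length-$n$ word ($n\ge 1$) and the empty word $\emptyset$ have filtration degree $\ge 1$ and $F_0=\K 1$, $\tdelta$ respects this filtration, and Takeuchi's formula applies.

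The main obstacle is making the infinite-dimensional duality rigorous and carrying out the combinatorial matching: one must set up the pairing and the transpose of $\star$ with care (this is exactly where local nilpotency guarantees that the relevant sums are finite and the transpose lands in the algebraic tensor product), and one must verify term by term that dualizing the recursive form of $\star$ reproduces the specific composite $(\theta_{f^i(x)}\otimes Id)\circ(Id\otimes m^{(i+1)})\circ(\tdelta\otimes Id^{\otimes i})\circ\Delta_\shuffle^{(i+1)}$. A fully self-contained alternative avoiding duality would be to \emph{define} $\Delta$ by the stated formula (it extends uniquely to an algebra morphism since $S(T(V,f))$ is free commutative on $T(V,f)$), and to prove coassociativity directly by induction on the length of $w\in T(V,f)$, using coassociativity of the deconcatenation coproduct $\Delta_\shuffle$, the $f$-equivariance built into the $\theta$-operators, and the interplay between $\tdelta$ and $\shuffle$ recorded in Theorem \ref{2}; the counit and antipode are then handled as above. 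I expect that route to be more computational but to rely only on results already established (Definition \ref{1}, Theorem \ref{2}, Theorems \ref{13} and \ref{14}).
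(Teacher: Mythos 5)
Your proposal is correct and takes essentially the same route as the paper: the author also obtains $\Delta$ as the transpose of the Oudom--Guin product $\star$ on $S(T(V^*,f^*))$, computes $\tdelta(\emptyset)$ and $\tdelta(xu)$ by pairing against $v_1\times\ldots\times v_k$ using the explicit formula of theorem \ref{13}, and invokes local nilpotency of $f$ exactly where you do, to make the sum over $i$ finite so that the coproduct lands in the algebraic tensor product.
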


\begin{proof} We denote by $g$ the transpose of $f$. As $S(T(V,f))$ is the dual of the enveloping algebra of a right pre-Lie algebra,
for all $w\in T(V,f)$:
$$\Delta(w)-1\otimes w\in T(V,f)\otimes S(T(V,f)). $$
We let $\tdelta(w)=\Delta(w)-1\otimes w$ for all $w\in T(V,f)$.
Let $v,v_1,\ldots,v_k$ be words in $T(V^*,f^*)$. If $v\neq \emptyset$, then $v\bullet v_1\times \ldots \times v_k \in T_+(V^*,f^*)$, so:
$$\langle v \otimes v_1\times \ldots \times v_k, \tdelta(\emptyset)\rangle=\langle v\bullet (v_1\times \ldots \times v_k),\emptyset\rangle=0
=\langle v \otimes v_1\times \ldots \times v_k, \emptyset \otimes 1\rangle.$$
If $v=\emptyset$:
$$\langle \emptyset \otimes v_1\times \ldots \times v_k, \tdelta(\emptyset)\rangle=\langle \emptyset\bullet 
(v_1\times \ldots \times v_k),\emptyset\rangle
=\delta_{k,0}=\langle \emptyset \otimes v_1\times \ldots \times v_k, \emptyset \otimes 1\rangle.$$
So $\tdelta(\emptyset)=\emptyset \otimes 1$.  \\

Let $v_1,\ldots,v_k \in T(V^*,f^*)$. Then:
\begin{align*}
&\langle \emptyset \otimes v_1\times \ldots \times v_k, \tdelta(xu)\rangle\\
&=\langle \emptyset \bullet (v_1\times \ldots \times v_k),xu\rangle\\
&=\delta_{k,0}\langle \emptyset, xu\rangle\\
&=0\\
&=\langle \emptyset \otimes v_1\times \ldots \times v_k,
\sum_{i\geq 0} (\theta_{f^i(x)} \otimes Id)\circ (Id \otimes m^{(i+1)}) \circ (\tdelta \otimes Id^{\otimes i})
 \circ \Delta_{\shuffle}^{(i+1)}(u)\rangle.
\end{align*}
Let $y\in V^*$, $v,v_1,\ldots,v_k \in T(V^*,f^*)$. Then:
\begin{align*}
&\langle yv \otimes v_1\times \ldots \times v_k, \tdelta(xu)\rangle\\
&=\langle yv \bullet (v_1\times \ldots \times v_k),xu\rangle\\
&=\sum_{I\subseteq \{1,\ldots,k\}}\langle g^{k-|I|}(y),x\rangle\langle (v\bullet v_I)\shuffle v_{\overline{I}}^{\shuffle},u\rangle\\
&=\sum_{I\subseteq \{1,\ldots,k\}}\langle y,f^{k-|I|}(x)\rangle\langle v\otimes v_I \otimes \Delta^{(|\overline{I}|)}
(w_{\overline{I}}),
(\tdelta \otimes Id^{\otimes |I|}) \circ \Delta_{\shuffle}^{(|\overline{I}|+1)}(u)\rangle\\
&=\sum_{i=0}^k \langle y,f^{k-|I|}(x)\rangle\langle v\otimes v_1\times \ldots \times v_k,
(Id \otimes m^{(i+1)}) \circ (\tdelta \otimes Id^{\otimes i}) \circ \Delta_{\shuffle}^{(i+1)}(u)\rangle\\
&=\sum_{i\geq 0}\langle uv \otimes v_1\times \ldots \times v_k,(Id \otimes m^{(i+1)})
 \circ (\tdelta \otimes Id^{\otimes i}) \circ \Delta_{\shuffle}^{(i+1)}(u)\rangle.
\end{align*}
This implies the announced result. \end{proof}\\

{\bf Examples.} Let $x_1,x_2 \in V$.
\begin{align*}
\tdelta(\emptyset)&=\emptyset \otimes 1,\\
\tdelta(x_1)&=\sum_{i\geq 0} f^i(x_1)\otimes \emptyset^{\times i},\\
\tdelta(x_1x_2)&=\sum_{i,j\geq 0} f^i(x_1)f^j(x_2)\otimes \emptyset^{\times (i+j)}
+\sum_{i\geq 0}i f^i(x_1)\otimes x_2\times \emptyset^{\times (i-1)}.
\end{align*}
In particular, the second line shows that the condition of local nilpotence of $f$ is necessary in order to obtain a coproduct with values
in $S(T(V,f))\otimes S(T(V,f))$.

\subsection{Group of characters}

Let us now describe the character group  $G$ associated to $f$. Any character of $S(T(V^*,f^*))$ is the extension of a linear map from 
$T(V^*,f^*)$ to $\K$, so, as a set, we identify $G$ and $\displaystyle \prod_{n=0}^\infty (V^{**})^{\otimes n}$. 
The composition of $G$ is denoted by $\diamond$. We shall consider the following subset of $G'$:
$$\overline{T}(V)=\prod_{n=0}^\infty V^{\otimes n}.$$
We give $G'$ and $\overline{T}(V)$ their usual ultrametric topology.\\

We shall prove that this is a subgroup of $G$. Note that $G=\overline{T}(V)$ if, and only if, $V$ is finite-dimensional.

\begin{theo}\label{16}
We inductively define a continuous composition $\tdiam$ on $\overline{T}(V)$: for all $v \in \overline{T}(V)$,
\begin{enumerate}
\item $\emptyset \tdiam v=\emptyset$.
\item if $x\in V$  and $u \in G$, $\displaystyle xu\tdiam v=\sum_{i\geq 0} f^{(i)}(x)((u\tdiam v) \shuffle v^{\shuffle i})$.
\end{enumerate}
Then for all $u,v \in G$, $u\diamond v=u\tdiam v+v$.
\end{theo}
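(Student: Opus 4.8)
The plan is to check that the stated recursion genuinely defines a continuous map $\tdiam$ on $\overline{T}(V)$, and then to identify $u\tdiam v+v$ with the group law $u\diamond v$ by evaluating both, as characters, on algebra generators of $S(T(V^*,f^*))$; throughout, recall that $f$ is assumed locally nilpotent, so that the Hopf algebra of Theorem~\ref{15} and the group $G$ are defined. For well-definedness I would first observe that local nilpotence makes the sum in rule~2 finite for a word $x_1\cdots x_n$, and that an induction on $n$ shows every word occurring in $x_1\cdots x_n\tdiam v$ has length $\geq n$: the leading letter $f^{(i)}(x_1)$ contributes $1$, and $x_2\cdots x_n\tdiam v$ contributes $\geq n-1$ by the inductive hypothesis. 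Hence for $u=\sum_{n\geq 0}u_n\in\overline{T}(V)$ the series $\sum_n u_n\tdiam v$ converges in $\overline{T}(V)$, its part of length $\leq D$ depending only on $u_0,\dots,u_D$ and on $v$ modulo words of length $>D$ (using local nilpotence again to bound the exponents $i$ in play); this makes $\tdiam$ well defined and continuous in both variables.

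Next I would reduce the identity to a check on generators. Here $G$ is the group of characters of the Hopf algebra $S(T(V^*,f^*))$, with $\diamond$ the group law, so that $\chi_{u\diamond v}=(\chi_u\otimes\chi_v)\circ\Delta$, where $u\in\overline{T}(V)\subseteq G$ corresponds to the character $\chi_u$ extending multiplicatively the linear form $v^*\mapsto\langle u,v^*\rangle$ on $T(V^*,f^*)$, the pairing $\langle\ ,\ \rangle$ being that of $V^{\otimes n}$ with $(V^*)^{\otimes n}$ and zero between different lengths. Since two characters agreeing on the generating subspace $T(V^*,f^*)$ coincide, and since $u\tdiam v+v\in\overline{T}(V)$ by the first step, it suffices to prove $\langle u\tdiam v+v,v^*\rangle=(\chi_u\otimes\chi_v)(\Delta v^*)$ for every word $v^*\in T(V^*,f^*)$. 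Writing $\Delta(v^*)=\tdelta(v^*)+1\otimes v^*$ and using $\chi_u(1)=1$, the summand $1\otimes v^*$ contributes exactly $\langle v,v^*\rangle$, so everything comes down to showing, for all words $v^*$, that $\langle u\tdiam v,v^*\rangle=(\chi_u\otimes\chi_v)(\tdelta(v^*))$.

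This last identity I would prove by induction on $lg(v^*)$, with $v$ fixed and $u$ arbitrary. For $v^*=\emptyset$, Theorem~\ref{15} gives $\tdelta(\emptyset)=\emptyset\otimes 1$, so the right side is $\chi_u(\emptyset)$; on the left, by the length estimate only $u_0(\emptyset\tdiam v)=u_0\,\emptyset$ has a length-$0$ part, which is again $\chi_u(\emptyset)$. For $v^*=\xi w^*$ with $\xi\in V^*$ I would, on one side, unfold $\langle u\tdiam v,\xi w^*\rangle$ by rule~2: a word of $x_1\cdots x_n\tdiam v$ begins with $f^{(i)}(x_1)$, which pairs against the first letter $\xi$ via $\langle f^{(i)}(x_1),\xi\rangle=\langle x_1,(f^*)^{(i)}(\xi)\rangle$, while $(x_2\cdots x_n\tdiam v)\shuffle v^{\shuffle i}$ pairs against $w^*$ by deconcatenating $w^*$ into $i{+}1$ consecutive blocks ($\shuffle$ being dual to deconcatenation), one block matched against $x_2\cdots x_n\tdiam v$ and the other $i$ against $v$. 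On the other side I would insert the formula of Theorem~\ref{15} for $\tdelta(\xi w^*)$ (with $f^*$ in place of $f$) and apply $\chi_u\otimes\chi_v$: the iterated coproduct $\Delta_{\shuffle}^{(i+1)}(w^*)$ is again the $(i{+}1)$-fold deconcatenation of $w^*$, the operator $\theta_{(f^*)^{(i)}(\xi)}$ prepends $(f^*)^{(i)}(\xi)$ so that $\chi_u$ produces the factor $\langle x_1,(f^*)^{(i)}(\xi)\rangle$ times a pairing of the tail $x_2\cdots x_n$, multiplicativity of $\chi_v$ turns $m^{(i+1)}$ into the product of the $i$ pairings $\langle v,-\rangle$, and the inductive hypothesis applied to the first block recognizes $(\langle x_2\cdots x_n,-\rangle\otimes\chi_v)(\tdelta(\text{first block}))$ as $\langle x_2\cdots x_n\tdiam v,\text{first block}\rangle$. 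The two expansions then agree term by term, which closes the induction and gives $u\diamond v=u\tdiam v+v$.

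The only genuine work is this last matching: aligning the ``deconcatenate $w^*$ into $i{+}1$ blocks, then multiply'' pattern of $\tdelta$ with the one-step recursion defining $\tdiam$, and keeping careful track of the length-$0$ word $\emptyset$ occurring on the right tensor leg; everything else is routine bookkeeping with the pairing, the transpose $(f^*)^{(i)}=(f^{(i)})^*$, and the duality between $\shuffle$ and deconcatenation. An alternative route, with the same combinatorial core, is to identify $G$ with the group of group-like elements $\exp_\times(u)$ of the completed enveloping algebra $\widehat{\U(T(V,f))}$ and to read off $u\diamond v$ from the formula for the product $\star$ in Theorem~\ref{14}, the recursion for $\tdiam$ then mirroring the recursion $xw\bullet(w_1\times\cdots\times w_k)$ of Theorem~\ref{13}.
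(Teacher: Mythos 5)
Your proposal is correct and follows essentially the same route as the paper: both arguments identify $u\tdiam v$ with the dual pairing $(u\otimes v)\circ\tdelta$ against the coproduct of Theorem \ref{15}, matching the recursion for $\tdiam$ term by term with the ``prepend $(f^*)^i(\xi)$, deconcatenate into $i+1$ blocks, multiply'' structure of $\tdelta$. The only difference is one of direction (the paper defines the composition dually and derives the recursion, while you take the recursion as the definition and verify the duality by induction on the test word), plus your more explicit treatment of well-definedness and continuity, which the paper dispatches with a brief remark at the end of its proof.
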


\begin{proof} We define a composition $\tdiamd$ on $\overline{T}(V)$ by $u\tdiamd v(w)=(u\otimes v)\circ \tdelta(w)$ for all $w \in T(V,f)$.
Then, for all $w \in T(V,f)$:
$$(u\diamond v)(w)=(u\otimes v)\circ \Delta(w)=(u\otimes v)\circ \tdelta(w)+u(1)v(w)=(u\tdiamd v)(w)+v(w).$$
So $u\diamond v=u\tdiamd v+v$. \\

Let $w \in T(V,f)$. Then $(\emptyset \tdiamd v)(w)=(\emptyset \otimes v)\circ \tdelta(w)$.
If $w \neq \emptyset$, we observed that $\tdelta(w)\in T_+(V,f) \otimes T(V,f)$, so $(\emptyset \otimes v)\circ \tdelta(w)=0$. 
Hence, $\emptyset \tdiamd v=\lambda \emptyset$ for a particular $\lambda$. If $w=\emptyset$, we obtain 
$(\emptyset \tdiamd v)(w)=\emptyset(\emptyset)v(1)=1$, so $\lambda=1$. \\

Let us compute $xu\tdiamd v$. First, $xu\tdiamd v(\emptyset)=(xu \otimes v)(\emptyset \otimes 1)=0$.
Let us take $y\in V^*$ and $w \in T(V^*,f^*)$. Then:
\begin{align*}
xu \tdiamd v(yw)&=\sum_{i\geq 0} (xu \otimes v)\left((\theta_{g^i(y)}\otimes Id)\circ (Id \otimes m^{(i+1)})
\circ (\tdelta \otimes Id)\circ \Delta_{\shuffle}^{(i+1)}(w)\right)\\
&=\sum_{i \geq 0} g^i(y)(x) (u\otimes v)\left((Id \otimes m^{(i+1)})\circ (\tdelta \otimes Id)
\circ \Delta_{\shuffle}^{(i+1)}(w)\right)\\
&=\sum_{i \geq 0} y(f^i(x)) (u\otimes v^{\otimes (i+1)})\left( (\tdelta \otimes Id)\circ \Delta_{\shuffle}^{(i+1)}(w)\right)\\
&=\sum_{i \geq 0} y(f^i(x)) (u\tdiamd v\otimes v^{\otimes i})\left( \Delta_{\shuffle}^{(i+1)}(w)\right)\\
&=\sum_{i \geq 0} y(f^i(x)) ((u\tdiamd v)\shuffle v^{\shuffle i})(w)\\
&=\sum_{i \geq 0} f^i(x)((u\tdiamd v)\shuffle v^{\shuffle i})(yw).
\end{align*}
So the composition $\tdiamd$ defined in this proof is the composition $\tdiam$ defined in theorem \ref{16}. Moreover, an easy induction 
on the length proves that for all word $u$ and all $w \in \overline{T}(V)$, $u\tdiam w \in \overline{T}(V)$. By linearity and continuity,
$\overline{T}(V)\tdiam\overline{T}(V) \subseteq \overline{T}(V)$.  \end{proof}

\begin{cor}
For all $x_1,\ldots,x_k \in V$, $v\in G$:
$$x_1\ldots x_k \diamond v=\sum_{i_1,\ldots,i_k \geq 0}
f^i(x_1)(f^{i_2}(x_2)(\ldots (f^{i_k}(x_k)v^{\shuffle i_k})\shuffle v^{\shuffle i_{k-1}})\ldots)
\shuffle v^{\shuffle i_1})+v.$$
\end{cor}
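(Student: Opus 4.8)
The plan is to unwind the inductive definition of the composition $\tdiam$ given in theorem \ref{16}. Since, by that theorem, $x_1\ldots x_k\diamond v=(x_1\ldots x_k\tdiam v)+v$, it suffices to identify $x_1\ldots x_k\tdiam v$ with the sum on the right-hand side of the claimed identity minus the trailing $v$. I would establish this by induction on $k$.

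For the base case $k=1$, point 2 of theorem \ref{16} applied with $u=\emptyset$, combined with point 1, gives
$$x_1\tdiam v=\sum_{i_1\geq 0}f^{i_1}(x_1)(\emptyset\shuffle v^{\shuffle i_1})=\sum_{i_1\geq 0}f^{i_1}(x_1)\,v^{\shuffle i_1},$$
which is the asserted expression (the $i_1=0$ term being $x_1$ itself).

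For the inductive step, set $w=x_2\ldots x_k$. Point 2 of theorem \ref{16} gives
$$x_1w\tdiam v=\sum_{i_1\geq 0}f^{i_1}(x_1)\bigl((w\tdiam v)\shuffle v^{\shuffle i_1}\bigr).$$
I would then substitute the induction hypothesis for $w\tdiam v$, and use the bilinearity of $\shuffle$ together with the linearity of $z\mapsto f^{i_1}(x_1)z$ to move the inner summation over $i_2,\ldots,i_k$ outside; this produces precisely the nested expression with the extra index $i_1$. Adding $v$ then yields the statement.

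I do not anticipate any real difficulty here: the only point deserving attention is that the sums over the indices $i_j$ are infinite, so the interchanges of summation and the applications of the maps $\shuffle$ and $z\mapsto f^{i_1}(x_1)z$ must be read in the ultrametric topology on $\overline{T}(V)$, in which these maps are continuous, as already exploited in the proof of theorem \ref{16}. Since $f$ is locally nilpotent, each such sum is in fact finite when evaluated on a fixed word, so all these manipulations are legitimate.
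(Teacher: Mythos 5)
Your proof is correct and is essentially the paper's own argument: the paper establishes this corollary by exactly the induction on $k$ you describe, unwinding point 2 of theorem \ref{16} with $u=x_2\ldots x_k$ and substituting the inductive hypothesis for $x_2\ldots x_k\tdiam v$. Your closing remark on reading the infinite sums in the ultrametric topology (using local nilpotence of $f$) is a precision the paper leaves implicit, but it does not change the route.
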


\begin{proof} By induction on $k$. \end{proof}

\section{From free Com-Pre-Lie algebras to $T(V,f)$}

\subsection{Free Com-Pre-Lie algebras}

Let us recall the construction of free Com-Pre-Lie algebras \cite{Foissyprelie}.

\begin{defi}\begin{enumerate}
\item A \emph{partitioned forest} is a pair $(F,I)$ such that:
\begin{enumerate}
\item $F$ is a rooted forest (the edges of $F$ being oriented from the roots to the leaves).
\item $I$ is a partition of the vertices of $F$ with the following condition:
if $x,y$ are two vertices of $F$ which are in the same part of $I$, then either they are both roots, or they have the same direct ascendant.
\end{enumerate}
The parts of the partition are called \emph{blocks}.
\item We shall say that a partitioned forest is a \emph{partitioned tree} if all the roots are in the same block.
\item Let $\D$ be a set. A \emph{partitioned tree decorated by $\D$} is a pair $(t,d)$, where $t$ is a partitioned tree and $d$ is a map
from the set of vertices of $t$ into $\D$. For any vertex $x$ of $t$, $d(x)$ is called the \emph{decoration} of $x$.
\item The set of isoclasses of partitioned trees will be denoted by $\PT$. 
For any set $\D$, the set of isoclasses of partitioned trees decorated by $\D$ will be denoted by $\PT(\D)$.
\end{enumerate}\end{defi}

{\bf Examples.} We represent partitioned trees by the Hasse graph of the underlying rooted forest, the blocks of cardinality $\geq 2$
being represented by horizontal edges of different colors. Here are the partitioned trees with $\leq 4$ vertices:
$$\tun;\tdeux,\hdeux;\ttroisun,\htroisun,\ttroisdeux,\htroisdeux=\htroistrois,\htroisquatre;
\tquatreun, \hquatreun=\hquatredeux,\hquatretrois,\tquatredeux=\tquatretrois,\hquatrequatre=\hquatrecinq,\tquatrequatre,\hquatresix,\tquatrecinq,$$
$$\hquatresept=\hquatrehuit,\hquatreneuf=\hquatredix,\hquatreonze=\hquatredouze,\hquatretreize,\hquatrequatorze=\hquatrequinze=\hquatreseize,
\hquatredixsept.$$

\begin{defi} Let $t=(t,I)$ and $t'=(t',J) \in \PT$. 
\begin{enumerate}
\item Let $s$ be a vertex of $t'$. The partitioned tree $t\bullet_s t'$ is defined as follows:
\begin{enumerate}
\item As a rooted forest, $t \bullet_s t'$ is obtained by grafting all the roots of $t'$ on the vertex $s$ of $t$.
\item We put $I=\{I_1,\ldots,I_k\}$ and $J=\{J_1,\ldots,J_l\}$. The partition of the vertices of this rooted forest is 
$I\sqcup J=\{I_1,\ldots,I_k,J_1,\ldots,J_l\}$.
\end{enumerate}
\item The partitioned tree $t\shuffle t'$ is defined as follows:
\begin{enumerate}
\item As a rooted forest, $t \shuffle t'$ is $tt'$.
\item We put $I=\{I_1,\ldots,I_k\}$ and $J=\{J_1,\ldots,J_l\}$
and we assume that the set of roots of $t$ is $I_1$ and the set of roots of $t'$ is $J_1$. The partition of the vertices of $t \shuffle t'$ is
$\{I_1\sqcup J_1,I_2,\ldots,I_k,J_1,\ldots,J_l\}$.
\end{enumerate}\end{enumerate}\end{defi}

{\bf Examples.} \begin{enumerate}
\item Here are the three possible graftings $\htroisun \bullet_s \tun$: $\hquatreun$, $\hquatrequatre$ and $\hquatrecinq$.
\item Here are the two possible graftings $\tdeux \bullet_s \hdeux$: $\hquatreun$ and $\hquatresix$.
\item $\ttroisun \shuffle \tun=\hquatresept$.
\end{enumerate}

These operations are similarly defined for decorated partitioned trees.\\

The free non unitary Com-Pre-Lie algebra is described in \cite{Foissyprelie} :

\begin{prop} 
Let $\D$ be a set. The free non unitary Com-Pre-Lie algebra $ComPrelie(\D)$ generated by $\D$ is the vector space generated by $\PT(\D)$. 
We extend $\shuffle$ by bilinearity and the pre-Lie product $\bullet$ is defined on in the following way:
if $t,t'\in \PT(\D)$, 
$$t\bullet t'=\sum_{s\in V(t)} t\bullet_s t'.$$
It satisfies the following universal property: if $A$ is a Com-Pre-Lie algebra and $a_d \in \D$ for all $d \in \D$, there exists a unique
Com-Pre-Lie algebra morphism $\phi:ComPrelie(\D)\longrightarrow A$, such that $\phi(\tdun{$d$})=a_d$ for all $d \in \D$.
\end{prop}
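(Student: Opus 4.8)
The plan is to verify that the vector space $\K\PT(\D)$, equipped with the shuffle product $\shuffle$ and the pre-Lie product $\bullet$ defined by grafting, is a Com-Pre-Lie algebra, and then to establish the universal property. For the algebraic axioms, I would first check that $\shuffle$ is commutative and associative: commutativity is immediate from the symmetry of the definition of $t\shuffle t'$, and associativity follows by checking that both $(t\shuffle t')\shuffle t''$ and $t\shuffle(t'\shuffle t'')$ produce the disjoint union of the three underlying forests with the three root-blocks merged into one. Next I would verify the pre-Lie relation for $\bullet$: expanding $t\bullet(t'\bullet t'')-(t\bullet t')\bullet t''$, the terms where $t''$ is grafted on a vertex of $t'$ (after $t'$ has been grafted on $t$) cancel, leaving only the terms where both $t'$ and $t''$ are grafted onto (possibly distinct, possibly equal) vertices of $t$; this remaining expression is manifestly symmetric in $t'$ and $t''$, which is exactly the pre-Lie axiom. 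Then the compatibility $(t\shuffle t')\bullet t''=(t\bullet t'')\shuffle t'+t\shuffle(t'\bullet t'')$ follows because grafting $t''$ onto a vertex of the forest underlying $t\shuffle t'$ means grafting it either onto a vertex coming from $t$ or onto one coming from $t'$, and the block structure is respected in each case (one must check the root block merging is compatible with grafting on a non-root vertex, which it is since grafting never touches the root block unless the target is a root).

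For the universal property, given a Com-Pre-Lie algebra $A$ and elements $a_d\in A$, I would define $\phi$ by induction on the number of vertices of a partitioned tree. The base case is $\phi(\tdun{$d$})=a_d$. For the inductive step I would use the fact, established in \cite{Foissyprelie}, that every partitioned tree with more than one vertex is obtained from smaller ones either by a product $\bullet$ (grafting) or by a product $\shuffle$ — more precisely, one writes each partitioned tree canonically in terms of these operations applied to strictly smaller partitioned trees — and then sets $\phi$ to commute with $\bullet$ and $\shuffle$. The key point to check is that this is well-defined, i.e. independent of the chosen decomposition; this is where the combinatorial structure of partitioned trees does the work. Finally, uniqueness: any Com-Pre-Lie morphism agreeing with $\phi$ on the generators $\tdun{$d$}$ must agree on all of $\PT(\D)$ because $\PT(\D)$ is generated by the $\tdun{$d$}$ under $\bullet$ and $\shuffle$, which one proves by the same induction on the number of vertices.

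The main obstacle I expect is not the verification of the algebraic identities — those are somewhat lengthy but routine bookkeeping of grafting operations and block merging — but rather the well-definedness of $\phi$ in the universal property, that is, showing the recursion is consistent. The subtlety is that a given partitioned tree may be expressible as $t\bullet_s t'$ (summed appropriately) or as $t_1\shuffle t_2$ in more than one way, so one needs a normal form or a careful argument that $\phi$ respects all such relations; this is really the content of the freeness statement and is presumably handled in \cite{Foissyprelie} by identifying a canonical generating procedure (e.g. peeling off the block of roots, or the "last" generator) together with a rewriting/confluence argument. Since the excerpt invokes \cite{Foissyprelie} for the construction itself, I would cite that reference for this coherence point and focus the written proof on the axioms and on reducing the universal property to the generation statement.
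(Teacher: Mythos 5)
The paper itself offers no proof of this proposition: it is quoted verbatim from \cite{Foissyprelie} with the sentence ``The free non unitary Com-Pre-Lie algebra is described in \cite{Foissyprelie}'', so there is no in-paper argument to compare yours against. Judged on its own terms, your outline of the algebraic axioms is correct. In particular your treatment of the pre-Lie identity is the right one: in $(t\bullet t')\bullet t''-t\bullet(t'\bullet t'')$ the terms where $t''$ lands on a vertex of $t'$ cancel because $(t\bullet_s t')\bullet_{s'}t''=t\bullet_s(t'\bullet_{s'}t'')$ for $s'\in V(t')$, and the surviving double sum over $s,s'\in V(t)$ is symmetric in $t',t''$ since grafting onto two vertices of $t$ commutes. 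The compatibility with $\shuffle$ also works as you describe, because grafting onto a vertex of $t$ or of $t'$ does not interfere with the merging of the two root blocks.

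The one substantive point you do not close is the universal property, and your framing of it is slightly inverted relative to the standard argument. You propose to define $\phi$ by declaring it a morphism on some decomposition and then to verify independence of the decomposition by a confluence argument; the cleaner route (and the one underlying \cite{Foissyprelie} and the Chapoton--Livernet-style proofs it adapts) is to define $\phi(t)$ by an explicit formula using the \emph{canonical} decomposition of a partitioned tree --- write $t=t_1\shuffle\ldots\shuffle t_k$ with each $t_i$ single-rooted (unique up to order), and write a single-rooted tree as its root grafted, in the Oudom--Guin sense $\tdun{$d$}\bullet(t_1\times\ldots\times t_m)$, onto the subtrees above each block of children of the root. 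This decomposition is essentially unique, so $\phi$ is well defined by construction, and the real work migrates to proving that the resulting map is a morphism for \emph{arbitrary} products $\bullet_s$ and $\shuffle$, not just the canonical ones. Either way the same verification must be done, and your proposal defers it to the reference rather than carrying it out; since the paper defers the entire proposition to that same reference, this is defensible, but be aware that the ``rewriting/confluence'' step you gesture at is precisely the content of freeness and is not routine. Your uniqueness argument is fine: the extended products $\tdun{$d$}\bullet(t_1\times\ldots\times t_m)$ are polynomial expressions in the binary $\bullet$, so $\PT(\D)$ is indeed generated by the one-vertex trees under $\bullet$ and $\shuffle$.
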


Rooted trees are identified with partitioned rooted trees such that any block has cardinality one. The free pre-Lie algebra $Prelie(\D)$,
which is based on decorated rooted trees \cite{Chapoton}, is seen as a pre-Lie subalgebra of $ComPrelie(\D)$.

\subsection{A generic case}

\begin{defi}\label{21}
\begin{enumerate}
\item Let $\D$ be a set. The set of biletters in $\D$ is $\mathbb{N}\times \D$. 
A biletter will be denoted by $\binom{k}{d}$, with $k\in \mathbb{N}$ and $d\in \D$. A biword in $\D$ is a word in biletters in $\D$. 
\item Let $V_\D$ be the space generated by the biletters in $\D$. We define a map $f_\D:V_\D\longrightarrow V_\D$ by $f_\D\left(\binom{k}{i}\right)
=\binom{k+1}{i}$ for all $k\geq 0$, $d\in \D$. 
\item The pre-Lie subalgebra $T(V_\D,f_\D)$ generated by the billeters $\binom{0}{d}$, $d \in \D$, is denoted by $\g_\D$.
\end{enumerate}\end{defi}

The pair $(V_\D,f_\D)$  satisfies a universal property:

\begin{prop}
Let $V$ be a vector space, $g:V\longrightarrow V$ a linear map, and for all $d\in \D$ $x_d \in V$. 
There exists a unique linear map
$F:V_\D\longrightarrow V$ such that $F \circ f_\D=g\circ F$ and $F\binom{0}{d}=x_d$ for all $d\in \D$.
\end{prop}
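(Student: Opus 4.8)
The plan is to define the map $F$ explicitly on a basis and then check it has the required properties. First I would observe that $V_\D$ has the biletters $\binom{k}{d}$, $k\geq 0$, $d\in\D$, as a basis, and that the constraints force the value of $F$ on each: the condition $F\circ f_\D=g\circ F$ together with $f_\D\binom{k}{d}=\binom{k+1}{d}$ gives inductively $F\binom{k}{d}=g^k(F\binom{0}{d})=g^k(x_d)$. This simultaneously proves uniqueness (the values on a basis are determined) and tells us how to define $F$ for existence: set $F\binom{k}{d}=g^k(x_d)$ and extend by linearity.

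Then I would verify that this $F$ satisfies the two required identities. The condition $F\binom{0}{d}=x_d$ is immediate since $g^0=Id$. For the intertwining relation, it suffices to check it on the basis: $F\circ f_\D\binom{k}{d}=F\binom{k+1}{d}=g^{k+1}(x_d)=g(g^k(x_d))=g\circ F\binom{k}{d}$, and both sides are linear in $\binom{k}{d}$, so the identity holds on all of $V_\D$.

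This argument is entirely routine — there is essentially no obstacle, since the universal property here is just that of ``free object with a distinguished endomorphism'', and the only content is unwinding the definition of $f_\D$. The one point worth stating carefully is that the biletters genuinely form a basis of $V_\D$ (which is true by definition, $V_\D$ being \emph{the space generated by the biletters}), so that defining $F$ on them by an arbitrary prescription and extending linearly is legitimate and well-defined. I would present the proof in two short paragraphs: one establishing that any such $F$ must send $\binom{k}{d}$ to $g^k(x_d)$ (uniqueness), and one checking that the so-defined $F$ indeed works (existence).
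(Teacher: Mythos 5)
Your proposal is correct and coincides with the paper's own (one-line) proof, which simply defines $F$ by $F\binom{k}{d}=g^k(x_d)$; you merely spell out the routine verification of uniqueness and of the two required identities.
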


\begin{proof} The map $F$ is defined by $F\binom{k}{d}=g^k(x_d)$. \end{proof}

\subsection{Generation of $T(V_\D,f_\D)$}

We now assume that $\D$ is a totally ordered set. Consequently, the set of biletters $\binom{0}{d}$, $d\in \D$, is also totally ordered.

\begin{prop}
\begin{enumerate}
\item The pre-Lie algebra $T(V_\D,f_\D)$ is generated by the set of words in biletters $\binom{0}{d}$, $d\in \D$.
Moreover, this set is a minimal set of generators.
\item The Com-Pre-Lie algebra $T(V_\D,f_\D)$ is generated by $\emptyset$ and the set of Lyndon words in biletters $\binom{0}{d}$, $d\in \D$.
Moreover, this set is a minimal set of generators.
\end{enumerate} \end{prop}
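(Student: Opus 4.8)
Throughout, write $B$ for the set of words in the biletters $\binom{0}{d}$, $d\in\D$ (the empty word $\emptyset$ included), call $\binom{a_1}{d_1}\ldots\binom{a_n}{d_n}$ a biword and $a_1+\ldots+a_n$ its \emph{weight}, so that $B$ is exactly the set of biwords of weight $0$. The plan is to prove the two generation assertions and to deduce the two minimality assertions from them by the usual ``indecomposables'' argument. For the latter, observe first that $\mathrm{Im}(f_\D)$ is spanned by the biletters $\binom{k}{d}$ with $k\geq1$, so Corollary \ref{6} identifies $T(V_\D,f_\D)\bullet T(V_\D,f_\D)$ with the span of the biwords of weight $\geq1$. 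Hence $B$ maps onto a basis of $T(V_\D,f_\D)/T(V_\D,f_\D)\bullet T(V_\D,f_\D)$, and since a generating set of a pre-Lie algebra always spans this quotient, the minimality in item 1 follows. Moreover $\shuffle$ preserves the weight, so modulo $T(V_\D,f_\D)\bullet T(V_\D,f_\D)$ the only surviving part of $T_+(V_\D,f_\D)\shuffle T_+(V_\D,f_\D)$ is the shuffle square of the weight $0$ biwords; by the classical theorem presenting the shuffle algebra over the totally ordered alphabet $\{\binom{0}{d}\mid d\in\D\}$ as a polynomial algebra on the Lyndon words (see e.g. \cite{Reutenauer}), $\emptyset$ together with the weight $0$ Lyndon words maps onto a basis of $T(V_\D,f_\D)/\mathcal I$, where $\mathcal I=T(V_\D,f_\D)\bullet T(V_\D,f_\D)+T_+(V_\D,f_\D)\shuffle T_+(V_\D,f_\D)$, which gives the minimality in item 2.

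For the generation assertion of item 1, I would prove that the pre-Lie subalgebra $\langle B\rangle$ generated by $B$ contains every biword, by induction on the triple $(n,m,p)$ in lexicographic order, where $n$ is the length, $m$ the weight and $p$ the position of the first biletter $\binom{a}{d}$ with $a\geq1$ (with $p$ ignored when $m=0$, the biword then lying in $B$). A single biletter is $\binom{k}{d}=\binom{k-1}{d}\bullet\emptyset$ (using $x\bullet w'=f_\D(x)w'$ for a biletter $x$), which handles length $1$ by the induction on $m$. If $z=\binom{a_1}{d_1}w'$ with $a_1\geq1$, then $z=\binom{a_1-1}{d_1}\bullet w'$ and both factors are strictly shorter. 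Finally, for $z=\binom{0}{d_1}\ldots\binom{0}{d_{p-1}}\binom{a_p}{d_p}\binom{a_{p+1}}{d_{p+1}}\ldots\binom{a_n}{d_n}$ with $p\geq2$ and $a_p\geq1$, put $A=\binom{0}{d_1}\ldots\binom{0}{d_{p-1}}\binom{a_p-1}{d_p}$ and $B'=\binom{a_{p+1}}{d_{p+1}}\ldots\binom{a_n}{d_n}$ and expand $A\bullet B'$ by Proposition \ref{4}. The single summand coming from $\sigma=\mathrm{id}$ with $f_\D$ applied to the last letter of $A$ is exactly $z$; every other summand has its first nonzero biletter at a position $<p$ — for $\sigma=\mathrm{id}$ because the $f_\D$ lands on an earlier (zero) letter, for $\sigma\neq\mathrm{id}$ because $m_p(\sigma)<p$ forces $f_\D$ onto one of the first $m_p(\sigma)$ letters, all of which are among the $\binom{0}{d_i}$ with $i<p$ — hence has length $n$, weight $m$ and a strictly smaller $p$, so lies in $\langle B\rangle$ by the induction hypothesis. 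When $p<n$ the factors $A$ (length $p$) and $B'$ (length $n-p$) are strictly shorter than $z$, so $A\bullet B'\in\langle B\rangle$; when $p=n$ one has $B'=\emptyset\in B$ while $A$, of length $n$, has weight $m-1$, so again $A\bullet B'\in\langle B\rangle$. In all cases $z\in\langle B\rangle$, which proves item 1.

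For item 2, the theorem on the shuffle algebra writes every weight $0$ biword as a $\shuffle$-polynomial in the weight $0$ Lyndon words, so the Com-Pre-Lie subalgebra generated by $\emptyset$ and these Lyndon words contains $B$; being stable under $\bullet$, it contains the pre-Lie subalgebra generated by $B$, which by item 1 is all of $T(V_\D,f_\D)$. Together with the first paragraph, this proves item 2.

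The step I expect to be the main obstacle is the third case of the induction in item 1, the biwords with a nonempty block of biletters $\binom{0}{d}$ in front: such a biword cannot be produced merely by ``prepending through $f_\D$'', and the point of the rewriting $z=A\bullet B'-(\text{correction terms})$ and of the analysis of the correction terms via Proposition \ref{4} is precisely to create $z$ while invoking only strictly smaller data. Verifying that for each correction term one of the three induction parameters drops, and isolating the degenerate case $p=n$ where one must instead decrease the weight by taking $B'=\emptyset$, is the part requiring care; everything else — the single-letter reduction, the reduction for biwords starting with a positive biletter, all of item 2, and the passage from the quotient computations to minimality — is routine once Corollary \ref{6} and the presentation of the shuffle algebra are in hand.
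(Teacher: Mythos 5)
Your proof is correct, and the minimality arguments together with all of item 2 coincide with the paper's: Corollary \ref{6} identifies $T(V_\D,f_\D)\bullet T(V_\D,f_\D)$ with the span of the positive-weight biwords, so the weight-$0$ biwords (resp.\ $\emptyset$ and the weight-$0$ Lyndon words, via the polynomial structure of the shuffle algebra on Lyndon words) map onto a basis of the relevant quotient of indecomposables. For the generation half of item 1 the paper argues in the same spirit --- induction on the length and on the position of the first positive biletter, producing the target biword as the leading term of a product whose correction terms have a strictly smaller first positive position --- but it raises the upper index at position $k$ from $0$ to $i$ in a single step by computing $x_1\ldots x_{k-1}\binom{0}{d}\bullet(\emptyset^{\times(i-1)}\times x_{k+1}\ldots x_n)$ with the Oudom--Guin extended products of Theorems \ref{13} and \ref{14} (these lie in any pre-Lie subalgebra containing their arguments, by the recursive definition of $u\bullet(u_1\times\cdots\times u_k)$). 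You instead raise the index one unit at a time using only the binary product and Proposition \ref{4}, at the cost of adding the weight as a third induction parameter. Your version is more elementary in that it bypasses Theorems \ref{13}--\ref{14} entirely, while the paper's is shorter once those theorems are available; your analysis of the correction terms (the $\sigma=\mathrm{id}$, $i<p$ terms and the $\sigma\neq\mathrm{id}$ terms all land $f_\D$ on a letter $\binom{0}{d_j}$ with $j<p$, hence cannot coincide with $z$, whose first positive position is $p$) is sound, including the degenerate case $p=n$ where the weight drops instead of the length.
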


\begin{proof}  1. We denote by $W$ the subspace of $V_\D$ generated by the biletters $\binom{0}{d}$, $d\in \D$.
Let $A$ be the pre-Lie subalgebra of $T(V_\D,f_\D)$ generated by $T(W)$. Let $w=x_1\ldots x_n$ be a words in biletters $\binom{i}{d}$, $i\geq 0$, $d\in \D$, 
If all its biletters are of the form $\binom{0}{d}$, then it belongs to $T(W) \subseteq A$. Let us now assume that it contains at least one
biletter $\binom{i}{d}$, with $i\geq 1$. We prove that $w\in A$ by induction on $n$.
If $n=1$, we put $x_1=\binom{i}{d}$,and:
$$\binom{0}{d}\bullet (\emptyset^{\times i})=\binom{i}{d} \in A.$$
So the result is true at rank $1$. Let us assume it at all rank $<n$, $n \geq 2$. 
Let $k$ be the smallest integer such that $x_k=\binom{i}{d}$, with $i\geq 1$. 
We proceed by induction on $k$. If $k=1$, then by the induction hypothesis on $n$, $x_2\ldots x_n \in A$, and by theorem  \ref{14}:
$$\binom{0}{d} \bullet (\emptyset^{\times(i-1)}\times x_2\ldots x_n)=\binom{i}{d}x_2\ldots x_n=x_1\ldots x_n \in A.$$
Let us assume the result at all rank $<k$, $k\geq 2$. We already know that $x_1\ldots x_{k-1}\binom{0}{d}$, and $ x_{k+1}\ldots x_n \in A$.
By proposition \ref{14}:
\begin{align*}
&x_1\ldots x_{k-1} \binom{0}{d} \bullet (\emptyset^{\times (i-1)}\times x_{k+1}\ldots x_n)\\
&=x_1\ldots x_n+\mbox{words of length $n$ satisfying the induction hypothesis on $k$} \in A.
\end{align*}
So $x_1\ldots x_n \in A$. Finally, $T(V_\D,f_\D)$ is generated by $T(W)$ as a pre-Lie algebra. \\

As $Im(f_\D) \cap W=(0)$, by corollary \ref{6}:
$$T(W) \oplus (T(V_\D,f_\D) \bullet T(V_\D,f_\D))=T(V_\D,f_\D).$$
So $T(W)$ is a minimal subspace of generators of $T(V_\D,f_\D)$. \\

2. Let $W'$ be the subspace of $T(V_\D,f_\D)$ generated by $\emptyset$ and the Lyndon words in biletters $\binom{0}{d}$.
We denote by $B$ the Com-Pre-Lie subalgebra of $T(V_\D,f_\D)$ generated by $W$.
Then $B$ contains the subalgebra generated by $W'$ (for the product $\shuffle$), that is to say the shuffle algebra $T(W)$.
By the first point, $B=T(V_\D,f_\D)$. \\

Let us prove now the minimality of $W'$. First, Lyndon words are a minimal system of generators of the shuffle algebra $T(W)$.
Moreover, $T(V_\D,f_\D) \bullet T(V_\D,f_\D)$ is contained in the set of words in biletters $\binom{i}{d}$, so:
$$W'\cap ((T_+(V_\D,f_\D)\shuffle T_+(V_\D,f_\D)) +(T(V_\D,f_\D) \bullet T(V_\D,f_\D)))=(0).$$
Consequently, $W'$ is a minimal subspace of generators of $(T(V_\D,f_\D),\shuffle,\bullet)$. \end{proof} 

\begin{cor} \label{24}
Let $V$ be a vector space and let $g:V\longrightarrow V$. Let $W$ be a subspace of $V$ such that:
$$V=\sum_{n=0}^\infty g^n(W).$$
Then $T(V,g)$ is generated, as a pre-Lie algebra, by $T(W)$. 
We give $W$ a totally ordered basis $(w_d)_{d\in D}$. Then $T(V,g)$ is generated, as a Com-Pre-Lie algebra, by $\emptyset$ and the
set of Lyndon words in letters $w_d$, $d\in \D$. 
\end{cor}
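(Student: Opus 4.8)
The plan is to deduce Corollary~\ref{24} from the proposition on generation of $T(V_\D,f_\D)$ via the universal property of the pair $(V_\D,f_\D)$. First I would choose a totally ordered basis $(w_d)_{d\in \D}$ of $W$, so that $\D$ becomes a totally ordered index set. By the universal property of $(V_\D,f_\D)$, there is a unique linear map $F:V_\D \longrightarrow V$ with $F\circ f_\D = g\circ F$ and $F\binom{0}{d}=w_d$ for all $d\in \D$; explicitly $F\binom{k}{d}=g^k(w_d)$. The hypothesis $V=\sum_{n\geq 0} g^n(W)$ says precisely that $F$ is surjective, since the image of $F$ is $\sum_{k\geq 0} g^k(W)=V$.

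Next I would invoke the functoriality result (the proposition following Lemma~\ref{7}): since $F\circ f_\D=g\circ F$, the induced map on tensor algebras
$$\tilde F:\left\{\begin{array}{rcl} T(V_\D)&\longrightarrow& T(V)\\ x_1\ldots x_n&\longmapsto& F(x_1)\ldots F(x_n)\end{array}\right.$$
is a surjective morphism of Com-Pre-Lie algebras from $T(V_\D,f_\D)$ onto $T(\mathrm{Im}(F),g_{\mid \mathrm{Im}(F)})=T(V,g)$, surjectivity of $\tilde F$ following from surjectivity of $F$. Now a surjective morphism of pre-Lie (resp.\ Com-Pre-Lie) algebras sends any generating set to a generating set: if $S$ generates $T(V_\D,f_\D)$ as a pre-Lie algebra, then $\tilde F(S)$ generates the image. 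Applying part~1 of the proposition, the set of words in the biletters $\binom{0}{d}$ generates $T(V_\D,f_\D)$ as a pre-Lie algebra, and $\tilde F$ sends such a word $\binom{0}{d_1}\ldots\binom{0}{d_n}$ to $w_{d_1}\ldots w_{d_n}$; hence the set of all words in the $w_d$, i.e.\ the shuffle subalgebra $T(W)$, generates $T(V,g)$ as a pre-Lie algebra. Similarly, applying part~2, $\emptyset$ together with the Lyndon words in the biletters $\binom{0}{d}$ generates $T(V_\D,f_\D)$ as a Com-Pre-Lie algebra; since $\tilde F(\emptyset)=\emptyset$ and $\tilde F$ sends the Lyndon word in $\binom{0}{d_1},\ldots,\binom{0}{d_n}$ to the corresponding Lyndon word in $w_{d_1},\ldots,w_{d_n}$ (the order being preserved by construction), we conclude that $\emptyset$ and the Lyndon words in the $w_d$ generate $T(V,g)$ as a Com-Pre-Lie algebra.

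I do not expect any serious obstacle here; the corollary is essentially a transport of structure along the surjection $\tilde F$. The one point deserving a line of care is checking that the image under $\tilde F$ of the generating set of the proposition is literally the claimed set of words (resp.\ Lyndon words) in the $w_d$ — this is immediate from $\tilde F\binom{0}{d_1}\ldots\binom{0}{d_n}=w_{d_1}\ldots w_{d_n}$ and from the fact that Lyndon words depend only on the underlying totally ordered alphabet, which $F$ matches by sending $\binom{0}{d}\mapsto w_d$ order-isomorphically. No claim of minimality is made in the corollary, so the argument stops once surjectivity of $\tilde F$ and the generation statements of the proposition have been combined.
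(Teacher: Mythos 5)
Your proposal is correct and follows essentially the same route as the paper: construct the map $V_\D\to V$ sending $\binom{0}{d}$ to $w_d$ via the universal property, observe that the hypothesis $V=\sum g^n(W)$ makes it surjective, apply the functoriality proposition to get a surjective Com-Pre-Lie morphism $T(V_\D,f_\D)\to T(V,g)$, and push forward the generating sets from the generation proposition for $T(V_\D,f_\D)$. Your extra remark that the Lyndon-word property is preserved because $d\mapsto w_d$ is an order-isomorphism onto a basis is a point the paper leaves implicit, and is a welcome clarification.
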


\begin{proof} Let $f:V_\D\longrightarrow V$ be the unique map such that $f\circ f_\D=g\circ f$, and let $F:T(V_\D,f_\D) \longrightarrow T(V,g)$
be obtained by functoriality. We obtain:
$$Im(f)=\sum_{n=0}^\infty g^n(W)=V,$$
so $Im(F)=T(Im(f))=T(V,g)$. As $T(V_\D,f_\D)$ is generated, as a pre-Lie algebra, by the words in biletters $\binom{0}{d}$, 
$T(V,g)$ is generated by their images, that is to say words in $W$. 

As $T(V_\D,f_\D)$ is generated,as a Com-Pre-Lie algebra, by $\emptyset$ and the Lyndon words in biletters $\binom{0}{d}$, $d\in \D$,
$T(V,g)$ is generated by their images, that is to say $\emptyset$ and the Lyndon words in letters $w_d$, $d\in \D$. \end{proof}\\

{\bf Remarks.} \begin{enumerate}
\item If $W$ is $1$-dimensional, that is to say if $(V,g)$ is cyclic, then $T(V,g)$ is generated, as a Com-Pre-Lie algebra, by $\emptyset$ and any
nonzero element of $W$.
 \item These sets of generators are not necessarily minimal. For example, if $g$ is surjective, we shall prove in corollary \ref{39} that $\emptyset$
 and $W$ generate $T(V,g)$, even if $V$ is not one-dimensional.
\end{enumerate}

\subsection{A morphism from partitioned trees to words in biletters}

Our aim is now to prove that $\g_\D$ the pre-Lie subalgebra of $T(V_\D,f_\D)$ generated by the biletters $\binom{0}{d}$, $d\in \D$, is free.
We shall use the following morphism:

\begin{defi}
We denote by $\Phi_{CPL}$ the Com-Pre-Lie algebra morphism defined by:
$$\Phi_{CPL}:\left\{\begin{array}{rcl}
ComPrelie(\D)&\longrightarrow&T(V_\D,f_\D)\\
\tdun{$d$}&\longrightarrow&\binom{0}{d}.
\end{array}\right.$$
Its restriction to $Prelie(\D)$ is denoted by $\Phi_{PL}$. Note that $\g_\D=\Phi_{PL}(ComPrelie(\D))$.
\end{defi}

{\bf Examples.} Let $a,b,c,d \in \D$.
\begin{align*}
\Phi_{CPL}(\tddeux{$a$}{$b$})&=\binom{10}{ab},\\
\Phi_{CPL}(\hdtroisdeux{$a$}{$b$}{$c$})&=\binom{100}{abc}+\binom{100}{acb}+\binom{010}{cab},\\
\Phi_{CPL}(\tdtroisun{$a$}{$c$}{$b$})&=\binom{200}{abc}+\binom{200}{acb},\\
\Phi_{CPL}(\tdtroisdeux{$a$}{$b$}{$c$})&=\binom{110}{abc},\\
\Phi_{CPL}(\tdquatreun{$a$}{$d$}{$c$}{$b$})&=\binom{3000}{abcd}+\binom{3000}{abdc}+
\binom{3000}{acbd}+\binom{3000}{acdb}+\binom{3000}{adbc}+\binom{3000}{adcb},\\
\Phi_{CPL}(\tdquatredeux{$a$}{$d$}{$b$}{$c$})&=\binom{2100}{abcd}+\binom{2100}{abdc}+\binom{2010}{adbc},\\
\Phi_{CPL}(\tdquatrequatre{$a$}{$b$}{$c$}{$d$})&=\binom{1200}{abcd}+\binom{1200}{abdc},\\
\Phi_{CPL}(\hdquatretreize{$a$}{$c$}{$b$}{$d$})&=\binom{1010}{acbd}+\binom{1100}{abcd}
+\binom{1100}{abdc}+\binom{1100}{bacd}+\binom{1100}{badc}+\binom{1010}{bdac},\\
\Phi_{CPL}(\hdquatrequatre{$a$}{$d$}{$b$}{$c$})&=\binom{1100}{abcd}+\binom{1100}{abdc}+\binom{1010}{adbc},\\
\Phi_{CPL}(\tdquatrecinq{$a$}{$b$}{$c$}{$d$})&=\binom{1110}{abcd}.
\end{align*}

{\bf Remark.} $\Phi_{CPL}$ is not injective: for any $d \in \D$,
$\Phi_{CPL}(\hdquatretreize{$d$}{$d$}{$d$}{$d$}-2\hdquatrequatre{$d$}{$d$}{$d$}{$d$})=0$.
However, we will prove that the restriction $\Phi_{PL}$ is injective (theorem \ref{31}).\\

By the universal property of $V_\D$ and the factoriality of the construction of $T(V,f)$:

\begin{theo}\label{26}
Let $V$ be a vector space, $f:V\longrightarrow V$ a linear map and $x_d \in V$ for all $d\in \D$. The following map is a Com-Pre-Lie algebra morphism:
$$\varphi_{(V,f)}:\left\{\begin{array}{rcl}
T(V_\D,f_\D)&\longrightarrow&T(V,f)\\
\binom{a_1\ldots a_n}{d_1\ldots d_n}&\longrightarrow&f^{a_1}(x_{d_1})\ldots f^{a_n}(x_{d_n}).
\end{array}\right.$$
Moreover, denoting by $W$ the smallest subspace of $V$ stable under $f$ and containing all the $x_d$'s, the image of $\varphi_{(V,f)}$ is $T(W)$.
\end{theo}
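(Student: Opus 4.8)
The plan is to realize $\varphi_{(V,f)}$ as the image, under the functoriality construction of $T(-,-)$, of the canonical intertwining map furnished by the universal property of the pair $(V_\D,f_\D)$ established above.

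\textbf{Step 1 (the intertwiner).} I would first apply the universal property of $(V_\D,f_\D)$ to the datum $(V,f,(x_d)_{d\in\D})$: it yields a unique linear map $F:V_\D\longrightarrow V$ with $F\circ f_\D=f\circ F$ and $F\binom{0}{d}=x_d$ for all $d\in\D$. Since $\binom{k}{d}=f_\D^{\,k}\binom{0}{d}$ and the biletters form a basis of $V_\D$, the intertwining relation forces $F\binom{k}{d}=f^{\,k}\!\left(F\binom{0}{d}\right)=f^{\,k}(x_d)$; this in particular re-proves uniqueness and gives the explicit formula for $F$.

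\textbf{Step 2 (the morphism).} Next I would feed $F$ into the functoriality proposition following Lemma \ref{7}, taking there $V_1=V_\D$, $f_1=f_\D$, $V_2=V$, $f_2=f$. As $F\circ f_\D=f\circ F$, that proposition produces a Com-Pre-Lie algebra morphism $T(V_\D)\longrightarrow T(V)$ sending a word $y_1\ldots y_n$ to $F(y_1)\ldots F(y_n)$. Evaluating on a biword $\binom{a_1\ldots a_n}{d_1\ldots d_n}=\binom{a_1}{d_1}\ldots\binom{a_n}{d_n}$ (the biwords forming a basis of $T(V_\D)$) gives $F\binom{a_1}{d_1}\ldots F\binom{a_n}{d_n}=f^{a_1}(x_{d_1})\ldots f^{a_n}(x_{d_n})$, which is exactly the prescription defining $\varphi_{(V,f)}$. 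Hence $\varphi_{(V,f)}$ is well defined and is a Com-Pre-Lie algebra morphism.

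\textbf{Step 3 (the image).} The same functoriality proposition identifies the image of this morphism as $T(Im(F),f_{\mid Im(F)})$, so it only remains to show $Im(F)=W$. By Step 1, $Im(F)$ is spanned by the elements $f^{\,k}(x_d)$ with $k\geq 0$, $d\in\D$; thus $Im(F)$ is $f$-stable and contains every $x_d$, while conversely any $f$-stable subspace of $V$ containing all the $x_d$'s must contain every $f^{\,k}(x_d)$. Therefore $Im(F)$ is the smallest $f$-stable subspace of $V$ containing the $x_d$'s, i.e. $Im(F)=W$, and the image of $\varphi_{(V,f)}$ is $T(W)$ (with the induced structure $T(W,f_{\mid W})$). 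I do not expect a genuine obstacle: the statement is a formal consequence of the two cited propositions, and the only points needing a line of verification are the computation $F\binom{k}{d}=f^{\,k}(x_d)$, which makes the announced closed formula for $\varphi_{(V,f)}$ visible, and the elementary remark that the $f$-orbit span of the generators coincides with $W$.
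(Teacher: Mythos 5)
Your proposal is correct and follows exactly the route the paper intends: the paper states this theorem with no written proof, deriving it directly from the universal property of $(V_\D,f_\D)$ and the functoriality proposition, which is precisely your Steps 1--3. The only content you add is the (easy but worth writing) verification that $Im(F)$ equals the $f$-orbit span $W$, which the paper leaves implicit.
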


\begin{defi}
Let $t$ be a partitioned tree decorated by $\D$. 
\begin{enumerate}
\item The number of vertices of $t$ is denoted by $|t|$.
\item Let $s$ be a vertex of $t$. 
\begin{enumerate}
\item The \emph{fertility} of $s$ is the number of blocks $B$ such that there is an edge from $s$ to any
vertex of $B$. It is denoted by $fert(t)$. 
\item The decoration of $s$ is denoted by $d(s)$.
\end{enumerate}
\item A \emph{linear extension} of $t$ is a bijection $\sigma:\{1,\ldots,|t|\}\longrightarrow Vert(t)$, such that if $x$ is a child of $y$, 
then $\sigma^{-1}(x)>\sigma^{-1}(y)$ (the edges of $t$ being oriented from the roots to the leaves). The set of linear extensions of $t$ 
is denoted by $L(t)$.
\end{enumerate} \end{defi}

Let us now give a direct description of $\Phi_{CPL}$.

\begin{prop}\label{28}
For all partitioned tree decorated by $\D$:
$$\Phi_{CPL}(t)=\sum_{\sigma \in L(t)}\left(\begin{array}{ccc}
fert(\sigma(1))&\ldots&fert(\sigma(|t|))\\
d(\sigma(1))&\ldots&d(\sigma(|t|))
\end{array}\right).$$
\end{prop}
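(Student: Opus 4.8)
The plan is to argue by strong induction on the number of vertices $|t|$. The base case $|t|=1$ is immediate: the only partitioned tree is $\tdun{$d$}$, it has a single linear extension, the unique vertex has fertility $0$, and $\Phi_{CPL}(\tdun{$d$})=\binom{0}{d}$ by definition. For the inductive step I would split into two cases according to the root block of $t$, which together with the base case exhaust all partitioned trees: either the root block is a single vertex (Case 2 below, when moreover $|t|\geq 2$) or it contains at least two vertices (Case 1). In both cases the idea is the same: decompose $t$ in terms of the Com-Pre-Lie operations applied to strictly smaller partitioned trees, push this decomposition through the morphism $\Phi_{CPL}$, apply the induction hypothesis, and then check a purely combinatorial identity about linear extensions.

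\emph{Case 1: the root block of $t$ contains at least two vertices.} Choose one root $r$, let $t'$ be the partitioned tree consisting of $r$ and its descendants (its root block is the singleton $\{r\}$), and let $t''$ be the partitioned tree formed by the remaining roots and their descendants. Then $t=t'\shuffle t''$ and both $t',t''$ have strictly fewer vertices, so $\Phi_{CPL}(t)=\Phi_{CPL}(t')\shuffle\Phi_{CPL}(t'')$ and the induction hypothesis applies to each factor. It then remains to observe that a linear extension of $t$ is exactly the datum of a pair $(\sigma',\sigma'')\in L(t')\times L(t'')$ together with an interleaving of the two blocks of positions, and that the fertility and decoration of each vertex are the same in $t$ as in $t'$ (resp.\ $t''$). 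Hence the shuffle product of the two biword sums is precisely the biword sum over $L(t)$.

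\emph{Case 2: the root block is a single vertex $r$, with decoration $d$ and fertility $k\geq 1$.} Let $\hat t_1,\dots,\hat t_k$ be the partitioned trees hanging from $r$ (one per child block of $r$), each with strictly fewer vertices. Using the Oudom--Guin recursion for the extended product on $S(ComPrelie(\D))$ together with the definition of $\bullet_s$, a short induction on $k$ shows that $t=\tdun{$d$}\bullet(\hat t_1\times\cdots\times\hat t_k)$ in $ComPrelie(\D)$. Applying the Com-Pre-Lie morphism $\Phi_{CPL}$ (which intertwines the Oudom--Guin extensions on the symmetric algebras), we get $\Phi_{CPL}(t)=\binom{0}{d}\bullet(\Phi_{CPL}(\hat t_1)\times\cdots\times\Phi_{CPL}(\hat t_k))$, and by the remark following theorem \ref{14} this equals $\binom{k}{d}\bigl(\Phi_{CPL}(\hat t_1)\shuffle\cdots\shuffle\Phi_{CPL}(\hat t_k)\bigr)$, the biletter $\binom{k}{d}$ being concatenated on the left. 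Expanding each $\Phi_{CPL}(\hat t_i)$ by the induction hypothesis and matching terms, I would use the bijection between $L(t)$ and the set of tuples $(\sigma_1,\dots,\sigma_k)\in L(\hat t_1)\times\cdots\times L(\hat t_k)$ together with an interleaving of the corresponding position sets: any $\sigma\in L(t)$ sends position $1$ to $r$ (giving the biletter $\binom{fert_t(r)}{d(r)}=\binom{k}{d}$), its restriction to the remaining positions is a linear extension of the forest $\hat t_1\sqcup\cdots\sqcup\hat t_k$, and $fert_t(v)=fert_{\hat t_i}(v)$ with the same decoration for every $v$ in $\hat t_i$ since the block structure below $v$ is identical in $t$ and in $\hat t_i$. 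Thus the biword of $\sigma$ is $\binom{k}{d}$ followed by an interleaving of the biwords of $\sigma_1,\dots,\sigma_k$, which is exactly a term of $\binom{k}{d}\bigl(\Phi_{CPL}(\hat t_1)\shuffle\cdots\shuffle\Phi_{CPL}(\hat t_k)\bigr)$; summing gives the claimed formula and closes the induction.

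The main obstacle I anticipate is the combinatorial bookkeeping in these last identities: making precise the standard-but-fiddly fact that linear extensions of a rooted forest factor as linear extensions of its trees shuffled together, and carefully tracking how fertilities and decorations transport under the two decompositions of $t$. A secondary point requiring care is the identity $t=\tdun{$d$}\bullet(\hat t_1\times\cdots\times\hat t_k)$ in $ComPrelie(\D)$ and the fact that a Com-Pre-Lie morphism automatically commutes with the Oudom--Guin extended products, which is what licenses pushing this identity through $\Phi_{CPL}$ and invoking the formulas of theorem \ref{14}. Once these are in place the proof is a routine induction.
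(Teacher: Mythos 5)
Your proposal is correct and follows essentially the same route as the paper's own proof: induction on $|t|$, splitting $t$ as a shuffle of two smaller partitioned trees when the root block has several vertices, and writing $t=\tdun{$d$}\bullet(\hat t_1\times\cdots\times\hat t_k)$ and invoking the Oudom--Guin formula of theorem \ref{13} (equivalently the remark after theorem \ref{14}) when the root is unique, followed by the same factorization of linear extensions into shuffles of linear extensions of the subtrees.
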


\begin{proof} By induction on $n=|t|$. It is obvious if $n=0$ or $n=1$. Let us assume the result at all ranks $<n$. Two cases can occur.

{\it First case}. Let us assume that $t$ has several roots. We can write $t=t_1\shuffle t_2$, with $t_1,t_2$ two partitioned trees with $<n$ vertices. 
Let us denote by $k$ and $l$ the number of vertices of $t_1$ and $t_2$. Then:
$$L(t)=\bigsqcup_{\sigma \in Sh(k,l)} (L(t_1)\otimes L(t_2))\circ \sigma^{-1}.$$
Hence, by the induction hypothesis applied to $t_1$ and $t_2$:
\begin{align*}
\Phi_{CPL}(t)&=\Phi_{CPL}(t_1)\shuffle \Phi_{CPL}(t_2)\\
&=\sum_{\sigma_1\in L(t_1),\:\sigma_2\in L(t_2)} 
\left(\begin{array}{ccc}
fert(\sigma_1(1))&\ldots&fert(\sigma_1(k))\\
d(\sigma_1(1))&\ldots&d(\sigma_1(k))
\end{array}\right)\shuffle \left(\begin{array}{ccc}
fert(\sigma_2(1))&\ldots&fert(\sigma_2(l))\\
d(\sigma_2(1))&\ldots&d(\sigma_2(n))
\end{array}\right)\\
&=\sum_{\sigma \in Sh(k,l)}\sum_{\sigma_1\in L(t_1),\:\sigma_2\in L(t_2)} 
\left(\begin{array}{ccc}
fert((\sigma_1\otimes \sigma_2)\circ \sigma^{-1}(1))&\ldots&fert((\sigma_1\otimes \sigma_2)\circ \sigma^{-1}(k+l))\\
d((\sigma_1\otimes \sigma_2)\circ \sigma^{-1}(1))&\ldots&d((\sigma_1\otimes \sigma_2)\circ \sigma^{-1}(k+l))
\end{array}\right)\\
&=\sum_{\sigma \in L(\sigma)}\left(\begin{array}{ccc}
fert(\sigma(1))&\ldots&fert(\sigma(n))\\
d(\sigma(1))&\ldots&d(\sigma(n))
\end{array}\right).
\end{align*}

{\it Second case.} Let us assume that $t$ has a single root. Let $t'=t_1\shuffle \ldots \shuffle t_k$ be the partitioned tree obtained
by deleting the root of $t$, where $t_1,\ldots,t_k$ are partitioned trees with a single root. If $d$ is the decoration of the root of $t$,
then $t=\tdun{$d$}\bullet t_1\times \ldots \times t_k$. We put $w_i=\Phi_{CPL}(t_i)$ for all $i$. Then, by theorem \ref{13}:
\begin{align*}
\Phi_{CPL}(t)&=\Phi_{CPL}(\tdun{$d$})\bullet w_1\times \ldots \times w_k\\
&=\binom{0}{d}\bullet w_1\times \ldots \times w_k\\
&=\sum_{I\subseteq \{1,\ldots,k\}} \binom{k-|I|}{d}((\emptyset \bullet w_I)\shuffle w_{\overline{I}}^{\shuffle})\\
&=0+\binom{k}{d}(\emptyset \shuffle w_{\overline{\emptyset}}^{\shuffle})\\
&=\binom{k}{d}( w_1\shuffle\ldots \shuffle w_k)\\
&=\binom{k}{d}(\Phi_{CPL}(G))\\
&=\sum_{\tau\in L(t')}\left(\begin{array}{cccc}
k&fert(\tau(1))&\ldots& fert(\tau(n-1))\\
d&d(\tau(1))&\ldots&d(\tau(n-1))
\end{array}\right). \end{align*}
Moreover, for all $\sigma\in L(t)$, $\sigma(1)$ is the root of $t$. Hence:
$$L(t)=(1)\otimes L(t').$$
Finally:
\begin{align*}
\Phi_{CPL}(t)&=\sum_{\tau\in L(t)}\left(\begin{array}{cccc}
k&fert((1)\otimes \tau(1))&\ldots& fert((1)\otimes \tau(n))\\
d&d((1)\otimes \tau(1))&\ldots&d((1)\otimes \tau(n))
\end{array}\right). \end{align*}
So the result holds for all partitioned tree $t$. \end{proof}\\

{\bf Remark.} If $t$ is a partitioned tree with $k$ blocks:
$$\sum_{i=1}^{|t|} fert(\sigma(i))=\sum_{x \in V(t)}\sharp\{\mbox{blocks of direct descendants of $x$}\}=k-1\leq |t|-1.$$
Moreover, this is an equality is an equality if, and only if, $t$ is a rooted tree.
Hence, for all partitioned tree of degree $n$, $\Phi_{CPL}(t)$ is a sum of biwords of length $n$, such that 
the sum of its upper letters is $\leq n-1$. Consequently, $\Phi_{CPL}$ is not surjective.

\subsection{Freeness of $\g_\D$}

We now prove that the restriction of $\Phi_{CPL}$ to the the free pre-Lie algebra $Prelie(\D)$ is injective. We shall use certain families of biwords:

\begin{defi}
Let $w=\binom{a_1\ldots a_n}{d_1\ldots d_n}$ be a biword.
\begin{enumerate}
\item We shall say that $w$ is \emph{admissible} if:
\begin{itemize}
\item For all $1\leq i \leq n$, $a_i+\ldots +a_n\leq n-i$.
\item $a_1+\ldots+a_n=n-1$.
\end{itemize}
\item We shall say that $w$ is $\sigma$-\emph{admissible} if it can be written as $w=w_1\ldots w_k$, with $w_1,\ldots,w_k$ admissible.
\end{enumerate}\end{defi}

{\bf Examples.} \begin{enumerate}
\item Here are the possible upper words for admissible words of length $\leq 4$:
$$0;10;200,110;3000,2100,1200,2010,1110.$$
\item Here are the possible upper words for $\sigma$-admissible words of length $\leq 4$:
$$0;00,10;000,100,200,010,110;$$
$$0000,1000,2000,3000,0100,1100,2100,0200,1200,0010,1010,2010,0110,1110.$$
\end{enumerate}

{\bf Remark.} If $w=\binom{a_1\ldots a_n}{d_1\ldots d_n}$ is admissible and $n\geq 1$, then $a_n\leq n-n=0$, so $a_n=0$.

\begin{lemma}\label{30} \begin{enumerate}
\item Let $w=\binom{a_1\ldots a_n}{d_1\ldots d_n}$ be a $\sigma$-admissible biword. Then it can be uniquely written as $w=w_1\ldots w_k$,
with $w_1,\ldots,w_k$ admissible. Moreover, $a_1+\ldots+a_n=n-k$.
\item Let $w=\binom{a_1\ldots a_n}{d_1\ldots d_n}$ be a $\sigma$-admissible word. Then for all $1\leq i\leq n$, $a_i+\ldots+a_n\leq n-i$.
\item Let $w=\binom{a_1\ldots a_n}{d_1\ldots d_n}$ be a word such that for all $1\leq i\leq n$, $a_i+\ldots+a_n \leq n-i$.
We put $k=n-(a_1+\ldots+a_n)$. Then $k\geq 1$ and for $d\in \D$, $\binom{ka_1\ldots a_n}{dd_1\ldots d_n}$ is admissible.
\end{enumerate}\end{lemma}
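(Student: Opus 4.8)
\textbf{Proof plan for Lemma \ref{30}.}

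The plan is to prove the three points in the order they are stated, since each one essentially packages a bookkeeping fact about the numbers $a_1,\ldots,a_n$ and their partial sums from the right. Throughout, write $S_i=a_i+\ldots+a_n$ for the tail sums, so $S_1=a_1+\ldots+a_n$ and $S_{n+1}=0$ by convention.

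For point 1, I would argue the uniqueness of the factorization $w=w_1\ldots w_k$ into admissible biwords by locating the cut points. If $w_1$ has length $m$, then admissibility of $w_1$ forces $a_1+\ldots+a_m=m-1$; conversely, for any proper prefix of $w_1$ of length $i<m$ we have $a_i+\ldots+a_m\le m-i$ strictly because $w_1$ is admissible and the last letter contributes nothing at position $m$ — more precisely, $a_1+\ldots+a_i = m-1-(a_{i+1}+\ldots+a_m)\ge m-1-(m-1-i)=i-1$ with equality exactly when $a_{i+1}+\ldots+a_m=m-1-i$, which by the admissibility inequality at position $i+1$ of $w_1$ can only happen... so the minimal prefix with partial sum equal to (length $-1$) is exactly $w_1$. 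Hence the first cut point is forced, and by induction on the number of blocks all cut points are forced; this gives uniqueness. For the count $S_1=n-k$: each $w_j$ of length $m_j$ contributes $m_j-1$ to the total sum of upper letters, so $S_1=\sum_j(m_j-1)=n-k$.

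For point 2, I would again use the block decomposition $w=w_1\ldots w_k$ from point 1. Fix $i$ with $1\le i\le n$; say position $i$ lies in block $w_j$, occupying position $i'$ within $w_j$ (so $w_j$ has some length $m_j$ and $1\le i'\le m_j$). Then $S_i=(a_i+\ldots+\text{end of }w_j)+(a_{\text{start of }w_{j+1}}+\ldots+a_n)$. The first summand is $\le m_j-i'$ by admissibility of $w_j$ (its inequality at position $i'$), and the second summand equals $\sum_{l>j}(m_l-1)$ by the admissibility of each later block. Adding, $S_i\le (m_j-i')+\sum_{l>j}(m_l-1)\le (m_j-i')+\sum_{l>j}m_l = n-i$, since $i=(\text{length of }w_1\ldots w_{j-1})+i'$ and the lengths of $w_1,\ldots,w_j$ sum to $i-i'+m_j$. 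This is the desired inequality.

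For point 3, the hypotheses are exactly that $S_i\le n-i$ for all $1\le i\le n$; I set $k=n-S_1$. Since $S_1\le n-1$ from the case $i=1$, we get $k\ge 1$. Now consider $w'=\binom{k\,a_1\ldots a_n}{d\,d_1\ldots d_n}$, a biword of length $n+1$ whose upper letters are $b_0=k,b_1=a_1,\ldots,b_n=a_n$. Its tail sums are $b_0+\ldots+b_n=k+S_1=n=(n+1)-1$, which is the equality condition for admissibility; and for $1\le i\le n$ the tail sum $b_i+\ldots+b_n=S_i\le n-i=(n+1)-(i+1)$, which is exactly the admissibility inequality at position $i+1$ of $w'$. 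Hence $w'$ is admissible.

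The only genuinely delicate point is the uniqueness assertion in point 1: one must be careful that the partial-sum condition really pins down the first block and does not allow an earlier spurious cut. I expect that to be the main obstacle, and I would handle it by the minimality argument sketched above — the first block ends at the first index $m$ at which the running sum $a_1+\ldots+a_m$ equals $m-1$, and the admissibility inequalities of a hypothetical block decomposition guarantee this index is well-defined and coincides across all decompositions. Once that is settled, points 2 and 3 are direct consequences of the definitions together with the additivity of upper-letter sums over blocks.
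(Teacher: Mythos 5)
Your proposal is correct and follows essentially the same route as the paper: the uniqueness in point 1 comes down to the same inequality (the paper compares two decompositions directly and derives $a_{i+1}+\ldots+a_j=j-i$ versus $\leq j-i-1$; your ``first cut point'' formulation is the same computation), and points 2 and 3 are the paper's arguments verbatim. One small repair in point 1: the bound you compute for a proper prefix of $w_1$ of length $i<m$ is $a_1+\ldots+a_i\geq (m-1)-(m-1-i)=i$, not $i-1$; with the correct value $i$ the prefix sum is strictly greater than $i-1$, so the dangling ``equality exactly when\ldots'' case never needs to be examined and the first cut point is pinned down immediately.
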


\begin{proof}  1. Let $w=w_1\ldots w_k=w'_1\ldots w'_l$ be two decompositions of $w$ in admissible words. 
$$a_1+\ldots+a_n=\sum_{i=1}^k (\mbox{sum of the letters of $w_i$})=\sum_{i=1}^k (lg(w_i)-1)=lg(w)-k=n-k.$$
Similarly, $a_1+\ldots+a_n=n-l$, so $k=l$. Let us assume that $w_1$ is formed by the first $i$ biletters of $w$, $w'_1$ is formed
by the first $j$ biletters of $w$, with $i<j$. As $w_1$ and $w'_1$ are admissible, $a_1+\ldots+a_j=j-1$ and $a_1+\ldots+a_i=i-1$, 
so $a_{i+1}+\ldots+a_j=(j-1)-(i-1)=j-i$. As $w'_1$ is admissible, $a_{i+1}+\ldots+a_j \leq j-i-1$: this is a contradiction. So $i=j$, 
and $w_1=w'_1$; hence, $w_2\ldots w_k=w'_2\ldots w'_k$. Iterating the process, $w_2=w'_2,\ldots,w_k=w'_k$.\\

2. We put $w=w_1\ldots w_k$, where $w_1,\ldots,w_k$ are admissible. We put, for all $i$,
$w_i=\binom{a_{i,1}\ldots a_{i,l_i}}{d_{i,1}\ldots d_{i,l_i}}$. For all $i,j$:
$$a_{i,j}+\ldots+a_{i,l_i}+a_{i+1,1}+\ldots+a_{k,l_k}\leq l_i-j+l_{I+1}-1+\ldots+l_k-1\leq l_i-j+l_{i+1}+\ldots+l_l.$$
Hence, if $w=\binom{a_1\ldots a_n}{d_1\ldots d_n}$, for all $1\leq i\leq n$, $a_i+\ldots+a_n\leq n-i$. \\

3. We put $b_1\ldots b_{n+1}=ka_1\ldots a_n$. If $2\leq i\leq n+1$:
$$b_i+\ldots +b_{n+1}=a_{i-1}+\ldots+a_n\leq n-(i-1)\leq n+1-i.$$
Moreover, $b_1+\ldots+b_{n+1}=k+a_1+\ldots+a_n=n=(n+1)-1$. So $\binom{b_1\ldots b_{n+1}}{d_1\ldots d_{n+1}}$
 is admissible.  \end{proof}

\begin{theo}\label{31}
The map $\Phi_{PL}:Prelie(\D)\longrightarrow T(V_\D,f_\D)$ is injective.
\end{theo}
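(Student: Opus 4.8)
The plan is to show that the images of the decorated rooted trees --- which form a basis of $Prelie(\D)$ --- are linearly independent in $T(V_\D,f_\D)$, by exhibiting a leading term for each $\Phi_{PL}(t)$ and checking that the leading-term assignment is injective. Both $Prelie(\D)$, with its basis of decorated rooted trees, and $T(V_\D,f_\D)$, with its basis of biwords, are graded (by the number of vertices, respectively by the length), and $\Phi_{PL}$ is homogeneous for these gradings by Proposition \ref{28}. So it suffices to prove injectivity on each homogeneous component $Prelie(\D)_n$.

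The first structural observation is that for a rooted tree $t$ with $n$ vertices, $\Phi_{PL}(t)$ is a linear combination of \emph{admissible} biwords of length $n$. Indeed, by Proposition \ref{28} every biword occurring comes from a linear extension $\sigma\in L(t)$ and has upper word $(fert(\sigma(1)),\dots,fert(\sigma(n)))$. For any $1\le i\le n$ the vertices $\sigma(i),\dots,\sigma(n)$ contain all their children in $t$ (a child of $\sigma(j)$ with $j\ge i$ sits at a position $>j\ge i$), so they span at most $n-i$ edges of the underlying forest; hence $fert(\sigma(i))+\dots+fert(\sigma(n))\le n-i$, and moreover $\sum_{j}fert(\sigma(j))=n-1$ because $t$ is a tree (the remark after Proposition \ref{28}). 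Thus the upper words arising are admissible, as claimed.

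Now I would totally order the biwords of fixed length $n$ by comparing upper words lexicographically (largest first) and breaking ties by the chosen total order on $\D$ applied to the lower words, and for each rooted tree $t$ let $\mathrm{lt}(t)$ be the greatest biword occurring in $\Phi_{PL}(t)$; its coefficient, being the number of linear extensions realizing the corresponding fertility--decoration sequence, is $\ge 1$. The key lemma is that $t\mapsto \mathrm{lt}(t)$ is injective on decorated rooted trees with $n$ vertices. This I would prove by induction on $n$ using the root-peeling identity: if $t$ has root decorated $d$ with branches $t_1,\dots,t_k$, then by Theorem \ref{13} (exactly as in the second case of the proof of Proposition \ref{28})
$$\Phi_{PL}(t)=\binom{k}{d}\bigl(\Phi_{PL}(t_1)\shuffle\dots\shuffle\Phi_{PL}(t_k)\bigr),$$
so every biword of $\Phi_{PL}(t)$, in particular $\mathrm{lt}(t)$, begins with the biletter $\binom{k}{d}$, which already recovers the decoration and the number of branches of the root; and the biword obtained by deleting that first biletter is the leading term of $\Phi_{PL}(t_1)\shuffle\dots\shuffle\Phi_{PL}(t_k)$, which (being a shuffle of admissible biwords) is $\sigma$-admissible, and has length $n-1$ and upper sum $n-1-k$, hence decomposes into exactly $k$ admissible blocks, uniquely by Lemma \ref{30}.1. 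One then has to read back from this biword the multiset of leading terms $\{\mathrm{lt}(t_1),\dots,\mathrm{lt}(t_k)\}$, and hence by the inductive hypothesis the multiset of branches, and hence $t$. Granting injectivity of $\mathrm{lt}$ with nonzero leading coefficients, a relation $\sum_t c_t\,\Phi_{PL}(t)=0$ in degree $n$ is impossible unless all $c_t=0$: take the $\prec$-maximal $\mathrm{lt}(t)$ among the $t$ with $c_t\ne 0$; it cannot be cancelled. Hence $\Phi_{PL}$ is injective.

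I expect the main obstacle to be precisely the last step of the key lemma: recovering the leading terms of the branches from the $k$-block $\sigma$-admissible biword sitting under $\binom{k}{d}$ in $\mathrm{lt}(t)$. The delicate point is that the maximal term of a shuffle is \emph{not} in general the concatenation of the maximal terms of the factors --- when several branches have comparable leading biwords the admissible blocks genuinely interleave --- so the reconstruction cannot be a naive block decomposition and must use the admissibility constraints together with the lower (decoration) words to disentangle the interleaving. This is exactly the combinatorial content deferred to the appendix, and Lemma \ref{30} (uniqueness of the admissible-block decomposition and the numerology relating length, upper sum and number of blocks) is the bookkeeping tool that makes the inductive reconstruction unambiguous.
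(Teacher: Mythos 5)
Your overall strategy --- triangularity with respect to a leading admissible biword, computed by root-peeling --- is exactly the paper's, but there is a genuine gap at the step you yourself flag: recovering the multiset $\{\mathrm{lt}(t_1),\dots,\mathrm{lt}(t_k)\}$ from the leading term of $\Phi_{PL}(t_1)\shuffle\dots\shuffle\Phi_{PL}(t_k)$. With your order (left-to-right lexicographic comparison of the upper words), this step does not merely need extra bookkeeping: it fails. Take $t_1$ a two-vertex ladder and $t_2$ a root with two children, so that the upper words of $\mathrm{lt}(t_1)$ and $\mathrm{lt}(t_2)$ are $10$ and $200$. The shuffle $\Phi_{PL}(t_1)\shuffle\Phi_{PL}(t_2)$ contains a term with upper word $21000$, which beats both concatenations $10200$ and $20010$ left-lexicographically; and $21000$ decomposes into admissible blocks as $(2100)(0)$, not as $\{10,200\}$. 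So under your order the maximal term of the shuffle genuinely interleaves the two factors, and the (unique) admissible-block decomposition of Lemma \ref{30} does not return the leading terms of the branches. Lemma \ref{30} is therefore not ``the bookkeeping tool that makes the reconstruction unambiguous'': uniqueness of the block decomposition is useless if the blocks are not the $\mathrm{lt}(t_i)$.

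The paper closes exactly this gap by choosing a different total order, tailored so that no interleaving occurs at the maximum: longer admissible words are declared larger, prepending a biletter is increasing, and $\sigma$-admissible words are compared through their admissible-block decompositions read \emph{from the right}. With that order, the ``third step'' of the paper's proof shows --- by an argument on linear extensions and ideals of the forest, not by Lemma \ref{30} alone --- that the maximal admissible word occurring in $\Phi_{CPL}(F)$ is precisely the sorted concatenation $w_{t_1}\dots w_{t_k}$: if a block of the maximal word mixed vertices of two branches, or if two blocks were out of order, one could exhibit a strictly larger word, a contradiction. That argument is the mathematical core of the theorem, and it is the piece your proposal identifies but does not supply. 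The rest of your write-up (admissibility of the occurring biwords, positivity of the leading coefficient, homogeneity, and the final triangularity argument) is correct and matches the paper.
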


\begin{proof} We shall consider the subalgebra (for the product $\shuffle$) generated by $Prelie(\D)$ in $ComPrelie(\D)$. 
We denote it by $\h_\D$. A basis of $\h_\D$ is given by the set of partitioned trees decorated by $\D$ such that any block has cardinality $1$,
except the block containing the roots. Forgetting the blocks, these special partitioned trees are identified with rooted forests.
Here are for example rooted forests with $\leq 4$ vertices:
$$\tun, \tdeux,\hdeux,\ttroisun,\ttroisdeux,\htroisdeux,\htroistrois,\htroisquatre,
\tquatreun,\tquatredeux,\tquatrequatre,\tquatrecinq,\hquatresept,\hquatreneuf,\hquatretreize,\hquatrequatorze,\hquatredixsept.$$

Let $\W_n$ be the set of $\sigma$-admissible words of length $n$ and let $\W$ be the union of the $\W_n$. We choose a total order on $\W$ such that:
\begin{enumerate}
\item If $u\in \W_m$  and $v\in \W_n$ with $m>n$, then $u>v$.
\item For all $n\geq 0$, for all $d\in \D$ the following map is increasing:
$$\begin{cases}
\W_n&\longrightarrow \W_{n+1}\\
\left(\begin{array}{ccc}
a_1&\ldots a_n\\
d_1&\ldots d_n
\end{array}\right)&\longrightarrow
\left(\begin{array}{cccc}
n-a_1\ldots-a_n&a_1&\ldots&a_n\\
d&d_1&\ldots &d_n
\end{array}\right).\end{cases}$$
Note that this maps takes its values in the set of admissible words, by lemma \ref{30}.
\item If $u=u_1\ldots u_k$ and $v=v_1\ldots v_l$ are two elements of $\W_n$ decomposed into admissible words, if for a particular index $i$,
$u_k=v_l,u_{k-1}=v_{l-1},\ldots,u_{k-i+1}=v_{k-i+1}$ and $u_{k-i}>v_{l-i}$, then $u>v$.
\end{enumerate}

For any decorated rooted forest $F$ (still identified with a partitioned tree by putting all the roots in the same block), 
we inductively define a biword $w_F$ in the following way:
\begin{itemize}
\item $w_1=\emptyset$.
\item If $F$ is not a tree, we put $F=t_1\ldots t_k$, indexed such that $w_{t_1}\leq \ldots \leq w_{t_k}$. 
Then $w_F=w_{t_1}\ldots w_{t_k}$.
\item If $F$ is a tree, let $d$ be the decoration of its root and let $t_1\ldots t_k$ be the forest obtained by deleting the root of $F$,
its trees being indexed such that $w_{t_1}\leq \ldots \leq w_{t_k}$. Then  $w_F=\binom{k}{d} w_{t_1}\ldots w_{t_k}$.
\end{itemize}

{\it First step}. Let us prove that for all rooted forest $F$:
\begin{enumerate}
\item the coefficient of $w_F$ in $\Phi_{CPL}(F)$ is not zero.
\item $w_F$ is $\sigma$-admissible. Moreover, if $F$ is a tree, $w_F$ is admissible.
\end{enumerate} 
We proceed by induction on the number $n$ of vertices of $F$. If $n=0$, this is obvious. Let us assume the result at all rank $<k$.
If $F$ is not a tree, we put $F=t_1\ldots t_k$. By the induction hypothesis, $w_{t_1},\ldots,w_{t_k}$ are admissible,
so $w_{t_1}\ldots w_{t_k}$ is $\sigma$-admissible. For all $1\leq i \leq k$, there exists a linear extension $\sigma_i$ of $t_i$ such that:
$$w_{t_i}=\left(\begin{array}{ccc}
fert(\sigma_i(1))&\ldots&fert(\sigma_i(n_i))\\
d(\sigma_i(1))&\ldots&d(\sigma_i(n_i))
\end{array}\right).$$
Then $\sigma=\sigma_1\otimes \ldots \otimes\sigma_k$ is a linear extension of $F$ and:
$$\left(\begin{array}{ccc}
fert(\sigma(1))&\ldots&fert(\sigma(n))\\
d(\sigma(1))&\ldots&d(\sigma(n))
\end{array}\right)=w_{t_1}\ldots w_{t_n},$$
so this biword appears in $\Phi_{CPL}(F)$. If $F$ is a tree, keeping the notations of the the definition of $w_F$, by the induction hypothesis, 
$w_{t_1},\ldots,w_{t_k}$ are admissible. By lemma \ref{30}, $\binom{k}{d}w_{t_1}\ldots w_{t_k}$ is admissible. 
For all $1\leq i \leq k$, there exists a linear extension $\sigma_i$ of $t_i$ such that:
$$w_{t_i}=\left(\begin{array}{ccc}
fert(\sigma_i(1))&\ldots&fert(\sigma_i(n_i))\\
d(\sigma_i(1))&\ldots&d(\sigma_i(n_i))
\end{array}\right).$$
Then $\sigma=(1)\otimes \sigma_1\otimes \ldots \otimes\sigma_k$ is a linear extension of $F$ and:
$$\left(\begin{array}{ccc}
fert(\sigma(1))&\ldots&fert(\sigma(n))\\
d(\sigma(1))&\ldots&d(\sigma(n))
\end{array}\right)=\binom{k}{d}w_{t_1}\ldots w_{t_n},$$
so this biword appears in $\Phi_{CPL}(F)$.\\

{\it Second step}. Let $F,G$ be two forests, such that $w_F=w_G$. As $w=w_F=w_G$ appears in $\Phi_{CPL}(F)$ 
and $\Phi_{CPL}(G)$,  then the number of vertices of  $F$ and $G$ is the length $n$ of $w$. Let us prove that $F=G$ by induction on $n$. 
It is obvious if $n=1$. Let us assume the result at all ranks $<n$. Let us put $F=t_1\ldots t_k$, and $G=t'_1\ldots t'_l$. As the biwords $w_{t_i}$ 
are admissible, by lemma \ref{30}, the sum of letters of $w$ is $n-k$; similarly, this sum is $n-l$, so $k=l$. If $k\geq 2$, 
by the unicity of the decomposition of $w$ into admissible biwords,  $w_{t_i}=w_{t'_i}$ for all $i$. By the induction hypothesis, $t_i=t'_i$
 for all $i$, so $F=G$. If $k=1$, considering the first biletter of $w$, the roots of $F$ and $G$ have the same decoration and the same fertility. 
Considering the biword obtained by deleting the first biletter of $w$, if $F'$ and $G'$ are the forests obtained by deleting the roots of $F$ and $G$, 
the induction hypothesis implies that $F'=G'$; hence, $F=G$. \\

{\it Third step}. For all forest $F$, we put:
$$w'_F=\max\{w\mid \mbox{the coefficient of $w$ in $\Phi_{CPL}(F) $ is not $0$ and $w$ is admissible}\}.$$
By the preceding observations, $w'_F$ exists and $w'_F \geq w_F$. Let us prove that $w'_F=w_F$ for any forest $F$. We proceed by induction
on the number $n$ of vertices of $F$. It is obvious if $F=1$. Let us assume the result at all rank $<n$. 

We put $w'_F=w_1\ldots w_k$, with $w_1\ldots w_k$ admissible, and $f$ the linear extension of $F$ corresponding th $w'_F$.
For all $1\leq i \leq k$, let $X_i$ be the subset of  the vertices of $F$ corresponding to the biletters of $w_i$. If $x \in X_i$ and $y\in X_j$ 
is a child of $x$ in $F$, then as $f$ is a linear extension of $F$, $j \geq i$. So for all $1\leq  i\leq k$, $F_i=X_i\sqcup\ldots \sqcup X_k$ 
is an ideal of $F$, and $w_i\ldots w_k$ appears in $\Phi_{CPL}(F_i)$. By lemma \ref{30}, the sum of the first letters of $w_i\ldots w_k$ is 
$|F_i|-(k-i+1)=|F_i|-lg(F_i)$, so $F_i$ is a forest of $k-i+1$ trees. 

Let us assume there exists an index $i$ such that $F_i$ is not the disjoint union of a tree formed by the vertices in $X_i$ and $F_{i+1}$. 
We choose $i$ maximal; for all $j>i$, let $t_j$ be the tree formed by the vertices of $X_j$, such that $F_{i+1}=t_{i+1}\ldots t_k$.
As $F_i$ is a forest of length $k-i+1$, and $X_i$ does not form a subtree of $F_i$, $X_i$ forms a forest $s_1\ldots s_l$ with $l>1$,
and at least one of the $s_p$, say $s_l$ for example, has for child a root of one of the $t_j$. Let $t$ be the subtree of $F$ containing $s_l$ and $t_j$.
Then a word of the form $\ldots w_t w_{t_{j+1}}\ldots w_{t_k}$ appears in $\Phi_{CPL}(F)$. As $|t|>|t_j|$, 
by the first condition on the order on admissible words, $w_{t_{j+1}}>w_{t_j}$. By the third condition on the order, 
$\ldots w_t w_{t_{j+1}}\ldots w_{t_k}>w_{t_1}\ldots w_{t_k}=w'_F$: contradicts the maximality of $w'_F$. 
We obtain that $F=t_1\ldots t_k$, the vertices of $t_i$ corresponding via $f$ to the billetters of $w_i$ for all $i$. Consequently, 
$w_i\leq w'_{t_i}=w_{t_i}$ for all $i$.

Let us assume that $i<j$ and $w_i>w_j$. Shuffling the biletters corresponding to the vertices of $t_i$ and $t_j$,  
$w_1\ldots w_j \ldots w_i \ldots w_k$ also appears in $\Phi_{CPL}(F)$ and is strictly greater than $w'_F$, 
by the third condition on the order: contradiction. So $w_1\leq \ldots \leq w_k$. 

Let us assume that $k\geq 2$. We put $F=t'_1\ldots t'_k$, with $w_{t'_1}\leq \ldots \leq w_{t'_k}$, 
such that $w_F=w_{t'_1}\ldots w_{t'_k}$. By the induction hypothesis, for all $i$, $w_{t'_i}=w'_{t'_i}$. In particular, 
$w_k\leq w'_{t_k}\leq w_{t'_k}$, as $t_k$ is one of the $t'_j$. If $w_k<w_{t'_k}$, then $w_1\ldots w_k<w_F$ 
by the third condition of the total order, and this contradicts the maximality of $w'_F$.
So $w_k=w'_{t'_k}$. Moreover, $w_1\ldots w'_{t_k}$ also appears in $\Phi_{CPL}(F)$; by maximality of $w'_F$, 
necessarily $w'_{t_k}\leq w_k$, so $w'_{t_k}=w_k$. By the second step, $t_k=t'_k$. We then obtain that 
$t_1\ldots t_{k-1}=t'_1\ldots t'_{k-1}$ and that $w'_{t_1\ldots t_{k-1}}=w_1\ldots w_{k-1}$. By the induction hypothesis, 
$w_1\ldots w_{k-1}=w_{t'_1}\ldots w_{t'_{k-1}}$, so $w'_F=w_F$. 

Let us assume that $k=1$. Let $d$ the decoration of the root of $F$, $l$ its fertility and $G$ be the forest obtained by deleting the root of $F$.
Then $\Phi_{CPL}(F)=\binom{l}{d}\Phi_{CPL}(G)$. By the second condition on the total order on $\sigma$-admissible biwords, 
$w'_F=\binom{l}{d}w'_G$. By the induction hypothesis, $w'_G=w_G$, so $w'_F=w_F$.  \\

{\it Last step}. Let $x=\sum a_F F$ be a nonzero element of $\h_\D$. As $F\longrightarrow w_F$ is injective
by the second step, let us totally order the set of decorated rooted forests such that $F<G$ if, and only if, $w_F<w_G$. Let $F_0$ be the maximal forest
such that $a_F\neq 0$. If $F<F_0$, then $w_F<w_{F_0}$; as $w_F=w'_F$, $w_{F_0}$ does not appear in $\Phi_{CPL}(F)$.
If $F>F_0$, $a_F=0$. So the coefficient of $w_F$ in $\Phi_{CPL}(F)$ is $a_{F_0}$ times the coefficient of $w_{F_0}$ in 
$\Phi_{CPL}(F_0)$, which is not zero. So $\Phi_{CPL}(x)\neq 0$: $\Phi_{CPL}$ is injective. \end{proof}\\

{\bf Remarks.} \begin{enumerate}
\item We even prove that the restriction of $\Phi_{CPL}$ to $\h_\D$ is injective.
\item Although it contains a free pre-Lie subalgebra, $T_+(V_\D,f_\D)$ is not free. Let us assume it is free.
As the length of words gives a connected gradation of $T_+(V_\D,f_\D)$, it is freely generated by any graded complement of 
$T_+(V_\D,f_\D)\bullet T_+(V_\D,f_\D)$. Let us choose $d\in \D$. Then it is not difficult to prove that we can choose a complement 
of  $T_+(V_\D,f_\D)\bullet T_+(V_\D,f_\D)$ which contains $\binom{0}{d}$, $\binom{1}{d}$ and $\binom{00}{dd}$, which implies that 
the pre-Lie subalgebra generated by these three elements is free. Hence, the following pre-Lie morphism is injective:
$$\psi:\left\{\begin{array}{rcl}
Prelie(\{0,1,00\})&\longrightarrow&T_+(V_\D,f_\D)\\
\tdun{$0$}&\longrightarrow&\binom{0}{d}\\
\tdun{$1$}&\longrightarrow&\binom{1}{d}\\
\tdun{$00$}&\longrightarrow&\binom{00}{dd}.
\end{array}\right.$$
But $\psi(\tdtroisun{$0$}{$0$}{$0$}-2\tddeux{$1$}{$00$}\:)=0$: this is a contradiction. So $T_+(V_\D,f_\D)$ is not free.
\end{enumerate}

\begin{cor}\label{32}
The pre-Lie subalgebra $\g_\D$ of $T(V_\D,f_\D)$ is freely generated by the biletters $\binom{0}{d}$, $d\in \D$.
\end{cor}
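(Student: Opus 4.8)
The idea is that all the real work has already been done in Theorem~\ref{31}; the corollary is then a formal consequence of the universal property of the free pre-Lie algebra. I would organize it as follows.

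First I would unwind the definitions. By construction $\Phi_{PL}$ is the restriction of the Com-Pre-Lie algebra morphism $\Phi_{CPL}$ to the pre-Lie subalgebra $Prelie(\D)\subseteq ComPrelie(\D)$, so it is a morphism of pre-Lie algebras $\Phi_{PL}:Prelie(\D)\longrightarrow T(V_\D,f_\D)$ with $\Phi_{PL}(\tdun{$d$})=\binom{0}{d}$ for every $d\in\D$. Since $Prelie(\D)$ is the free pre-Lie algebra on $\D$, it is generated as a pre-Lie algebra by the corollas $\tdun{$d$}$, $d\in\D$; hence its image $\Phi_{PL}(Prelie(\D))$ is exactly the pre-Lie subalgebra of $T(V_\D,f_\D)$ generated by the biletters $\binom{0}{d}$, $d\in\D$, which is by definition $\g_\D$. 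Thus $\Phi_{PL}:Prelie(\D)\longrightarrow \g_\D$ is surjective. It is injective by Theorem~\ref{31}, so it is an isomorphism of pre-Lie algebras.

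Then I would conclude by transporting the universal property along this isomorphism. Let $A$ be any pre-Lie algebra and $(a_d)_{d\in\D}$ a family in $A$. There is a unique pre-Lie morphism $\phi:Prelie(\D)\longrightarrow A$ with $\phi(\tdun{$d$})=a_d$ for all $d\in\D$; then $\phi\circ\Phi_{PL}^{-1}:\g_\D\longrightarrow A$ is a pre-Lie morphism sending $\binom{0}{d}$ to $a_d$, and it is the unique one with this property since $\Phi_{PL}$ is an isomorphism and the $\binom{0}{d}$ generate $\g_\D$. This is precisely the statement that $(\binom{0}{d})_{d\in\D}$ is a free system of pre-Lie generators of $\g_\D$.

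\textbf{Main obstacle.} At this point there is essentially none: the entire difficulty was packed into Theorem~\ref{31}, whose proof (via the order on $\sigma$-admissible biwords and the combinatorics of the triangular ``leading term'' $w_F$) occupies the preceding subsection. The only thing to be careful about here is the bookkeeping identifying $\operatorname{Im}(\Phi_{PL})$ with $\g_\D$, which is immediate from the universal property of $Prelie(\D)$; one should also note in passing that, by contrast with the restriction $\Phi_{PL}$, the full morphism $\Phi_{CPL}$ is not injective (as already observed), so it is genuinely the pre-Lie — not the Com-Pre-Lie — subalgebra generated by the $\binom{0}{d}$ that comes out free.
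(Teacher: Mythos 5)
Your proposal is correct and follows exactly the route the paper intends: the corollary is stated there without further argument as an immediate consequence of Theorem \ref{31}, since $\g_\D$ is by definition the image of the free pre-Lie algebra $Prelie(\D)$ under $\Phi_{PL}$, and an injective morphism from a free object onto its image transports the universal property. Your extra bookkeeping (identifying $Im(\Phi_{PL})$ with $\g_\D$ and verifying the universal property of the generators $\binom{0}{d}$) just makes explicit what the paper leaves implicit.
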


We shall give more results on the morphism $\Phi_{CPL}$ and the admissible words in the appendix.

\subsection{Prelie subalgebra generated by $V$}

We here consider the pre-Lie subalgebra of $V$ generated by $V$. We denote by $Prelie(V)$ the free pre-Lie algebra generated by $V$.
This is the space of rooted trees decorated by $V$, each vertex being linear in its decoration. The pre-Lie product is also given by grafting.
We shall also consider the free Com-Pre-Lie algebra generated by $V$. This is the space of partitioned trees decorated by $V$, 
each vertex being linear in its decoration. We consider the two morphisms:
\begin{align*}
\phi_{PL}&:\left\{\begin{array}{rcl}
Prelie(V)&\longrightarrow&T_+(V)\\
\tdun{$v$}&\longrightarrow&v,
\end{array}\right.&\phi_{CPL}&:\left\{\begin{array}{rcl}
ComPrelie(V)&\longrightarrow&T_+(V)\\
\tdun{$v$}&\longrightarrow&v.
\end{array}\right.\end{align*}
Note that $\phi_{PL}$ is the restriction of $\phi_{CPL}$ on $Prelie(V)\subseteq ComPrelie(V)$.  \\

Combining theorem \ref{26} and proposition \ref{28}, $\phi_{CPL}=\varphi_{(V,f)}\circ \Phi_{CPL}$. 
Hence, for all partitioned tree $t$ decorated by $V$:
$$\phi_{CPL}(t)=\sum_{\sigma\in L(t)} f^{fert(\sigma(1))}(d(\sigma(1)))\ldots  
f^{fert(\sigma(|t|))}(d(\sigma(|t|))).$$

\begin{theo}\label{33} \begin{enumerate}
\item $\phi_{PL}$ is surjective if, and only if, $f(V)=V$. 
\item $\phi_{CPL}$ is surjective if, and only if, the codimension of $f(V)$ in $V$ is $\leq 1$.
\end{enumerate}\end{theo}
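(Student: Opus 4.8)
The plan is, in both parts, to identify the image of the morphism with the pre-Lie (resp.\ Com-Pre-Lie) subalgebra of $T_+(V)$ generated by $V$ --- write $A$ (resp.\ $B$) for it --- and then to treat the two implications separately: the "only if" direction is visible already in length $2$, while the "if" direction comes from an induction on word length. Throughout I use that $\bullet$ and $\shuffle$ are homogeneous for the grading of $T(V,f)$ by word length (a consequence of Proposition \ref{4}) and that $x\bullet w=f(x)w$ for $x\in V$, $w\in T(V,f)$ (Definition \ref{1}); hence $A$ and $B$ are graded, $A_1=B_1=V$, there are no generators of length $\geq 2$, and $A,B\subseteq T_+(V)$, so $\phi_{PL}$ (resp.\ $\phi_{CPL}$) is onto precisely when $A$ (resp.\ $B$) equals $T_+(V)$. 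For the "only if" direction of 1: if $\phi_{PL}$ is onto then $A_2=V\otimes V$, but $A_2$ is the span of $V\bullet V$, which is $f(V)\otimes V$, so $f(V)\otimes V=V\otimes V$ and thus $f(V)=V$. For the "only if" direction of 2: likewise $B_2=V\otimes V$, and $B_2$ is the span of $(V\bullet V)\cup(V\shuffle V)$, i.e.\ $\big(f(V)\otimes V\big)+S^2(V)$ inside $V\otimes V$; passing to the quotient $V\otimes V/S^2(V)\cong\Lambda^2 V$ (characteristic zero) this forces $f(V)\wedge V=\Lambda^2 V$, and working with a basis of $V$ adapted to a complement $U$ of $f(V)$ one sees this holds when $\dim U\leq 1$ and fails as soon as $\dim U\geq 2$ (then $u\wedge u'$ for independent $u,u'\in U$ lies outside $f(V)\wedge V$). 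So the codimension of $f(V)$ must be $\leq 1$.

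For the "if" direction of 1, assume $f(V)=V$ and show $T_+(V)\subseteq A$ by induction on word length: length $1$ is clear, and for $w=x_1\cdots x_n$ with $n\geq 2$ choose $y_1$ with $f(y_1)=x_1$, so that $y_1\bullet(x_2\cdots x_n)=f(y_1)\,x_2\cdots x_n=w$ with $x_2\cdots x_n\in A$ by induction. Since $\phi_{PL}$ is the restriction of $\phi_{CPL}$ to $Prelie(V)$, this also gives the "if" direction of 2 in the subcase $f(V)=V$.

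The remaining, and main, case is the "if" direction of 2 when $f(V)$ has codimension $1$. Fix $v_0\notin f(V)$ with $V=f(V)\oplus\K v_0$ and a basis of $V$ consisting of $v_0$ together with a basis of $f(V)$; it suffices to show every word $w=x_1\cdots x_n$ in these basis letters lies in $B$. I would run a primary induction on $n$ and, for fixed $n$, a secondary induction on the length $\ell$ of the leading run of $v_0$'s in $w$. If $\ell=0$ then $x_1\in f(V)$ and the argument of the previous paragraph (preimage under $f$, then $\bullet$, using that shorter words are already in $B$) gives $w\in B$. If $\ell\geq 1$, write $w=v_0^\ell x_{\ell+1}\cdots x_n$ with $x_{\ell+1}\in f(V)$ (or $w=v_0^n$); then $w'':=v_0^{\ell-1}x_{\ell+1}\cdots x_n$ is shorter, so $w''\in B$, and expanding $v_0\shuffle w''\in B$ one checks that $w$ occurs with coefficient $\ell$ while every other word occurring has leading $v_0$-run of length $\ell-1$, hence lies in $B$ by the secondary induction; dividing by $\ell\neq 0$ yields $w\in B$, and the case $w=v_0^n$ is the same computation with no correction terms. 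I expect this last step to be the only delicate point: making the combinatorics of the shuffle expansion mesh with the two nested inductions. Everything else reduces to the degree-$2$ linear algebra above and the one-line surjectivity argument when $f$ is onto.
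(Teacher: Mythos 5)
Your proposal is correct and follows essentially the same route as the paper: the ``only if'' directions come from comparing the degree-$2$ components (your passage to $\Lambda^2 V$ is just a repackaging of the paper's explicit functional $g$ killing $f(V)$), the surjective case uses preimages under $f$ to build words letter by letter, and the codimension-one case uses exactly the paper's double induction on the length and on the leading run of $v_0$'s, extracting $w$ with coefficient $\ell$ from $v_0\shuffle w''$ and dividing in characteristic zero. No gaps.
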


\begin{proof} 1. $\Longrightarrow$. The length of words is a gradation of $T(V,f)$; moreover, $\phi_{CPL}$ 
is homogeneous for this gradation. Hence:
$$Im(\phi_{PL})\cap V^{\otimes 2}=Vect(\phi_{PL}(\tddeux{$v$}{$w$})\mid v,w\in V)
=Vect(f(v)w\mid v,w \in V)=f(V)\otimes V.$$
If $\phi_{PL}$ is surjective, then $f(V)\otimes V=V\otimes V$, so $f(V)=V$. \\

1. $\Longleftarrow$. Let $v_1\ldots v_n \in V^{\otimes n}$. As $f(V)=V$, for all $1\leq i \leq n-1$, there exists $w_i \in V$ such that
$f(w_i)=v_i$. Let $t$ be the ladder with $n$ vertices, decorated from the root to the leaf by $w_1,\ldots,w_{n-1},v_n$. 
Then $L(t)$ is reduced to a single element, and $\phi_{PL}(t)=f(w_1)\ldots f(w_{n-1})v_n=v_1\ldots v_n$. So $\phi_{PL}$ 
is surjective. \\

2. $\Longrightarrow$. By homogeneity of $\phi_{CPL}$:
$$Im(\phi_{CPL})\cap V^{\otimes 2}=Vect(\phi_{CPL}(\tddeux{$v$}{$w$}), \phi_{CPL}(\hddeux{$v$}{$w$})\mid v,w\in V)
=Vect(f(v)w,vw+wv\mid v,w\in V).$$
Let us assume that the codimension of $f(V)$ in $V$ is $\geq 2$. Let $x,y \in V$, linearly independent, such that $f(V)\cap Vect(x,y)=(0)$.
There exists a linear map $g:V\longrightarrow V$, such that $g(f(V))=(0)$ and $g(x)=x$, $g(y)=y$.
As $\phi_{CPL}$ is surjective, we can write:
 $$xy=\sum_{i\in I} f(v_i)w_i+\sum_{j\in J} (v_jw_j+w_jv_j).$$
 Hence:
 \begin{align*}
xy&=g(x)g(y)\\
&=\sum_{i\in I} g(f(v_i))g(w_i)+\sum_{j\in J} (g(v_j)g(w_j)+g(w_j)g(v_j))\\
&=0 +\sum_{j\in J} (g(v_j)g(w_j)+g(w_j)g(v_j))\\
&=\sum_{j\in J} (g(w_j)g(v_j)+g(v_j)g(w_j))\\
&=g(y)g(x)\\
&=yx.
\end{align*}
So $xy=yx$, and consequently $x$ and $y$ are linearly independent: this is a contradiction. So the codimension of $f(V)$ in $V$ is $\leq 1$.\\

2. $\Longleftarrow$. If $f(V)=V$, then by the first point, $\phi_{PL}$, and consequently $\phi_{CPL}$, is surjective.
Let us assume that the codimension of $f(V)$ in $V$ is $1$. Let us choose $x \in V$, such that $V=f(V)\oplus Vect(x)$. 
In order to prove that $\phi_{CPL}$ is surjective, it is enough to prove that $v_1\ldots v_n \in Im(\phi_{CPL})$ for all $n\geq 1$, 
$v_1,\ldots,v_n \in \{x\} \cup f(V)$. We proceed by induction on $n$. If $n=1$, then $v_1=\phi_{CPL}(\tdun{$v_1$}\:)$. 
Let us assume the result at rank $n-1$. Let $k$ be the greatest integer such that $v_1=\ldots=v_k=x$, with the convention that $k=0$ if $v_1\neq x$.
We proceed by induction on $k$. If $k=0$, let us put $v_1=f(w_1)$. By the induction on $n$, there exists $P \in ComPrelie(V)$ 
such that $\phi_{CPL}(P)=v_2\ldots v_n$. Then:
$$\phi_{CPL}(\tdun{$w_1$}\:\bullet X)=w_1\bullet v_2\ldots v_n=f(w_1)v_2\ldots v_n=v_1\ldots v_n.$$
Let us assume the result at rank $k-1$, with $k\geq 1$. By the induction hypothesis on $n$, there exists $X \in ComPrelie(V)$ such that 
$\phi_{CPL}(X)=x^{k-1}v_{k+1}\ldots v_n$. Then:
\begin{align*}
\phi_{CPL}(\tdun{$x$}\shuffle X)&=x\shuffle x^{k-1}v_{k+1}\ldots v_n\\
&=k x^kv_{k+1}\ldots v_n+\sum_{i=k+1}^n x^{k-1}v_{k+1}\ldots v_i x v_{i+1}\ldots v_n.
\end{align*}
By the induction hypothesis on $k$, for all $k+1\leq i \leq n$, $x^{k-1}v_{k+1}\ldots v_i x v_{i+1}\ldots v_n \in Im(\phi_{CPL})$.
So $k x^k kv_{k+1}\ldots v_n \in Im(\phi_{CPL})$. As the characteristic of the base field is zero, $v_1\ldots v_n
=x^k kv_{k+1}\ldots v_n \in Im(\phi_{CPL})$. So $\phi_{CPL}$ is surjective. \end{proof}

\begin{cor}\label{34} \begin{enumerate}
\item \begin{enumerate}
\item The subspace $V$ generates the pre-Lie algebra $T_+(V)$ if, and only if, $f(V)=V$.
\item The subspace $V$ generates the Com-Pre-Lie algebra $T_+(V)$ if, and only if, the codimension of $f(V)$ in $V$ is $\leq 1$.
\end{enumerate}
\item Let $W$ be a subspace of $V$. We put:
$$\overline{W}=\sum_{k=0}^\infty f^k(W),$$
that is to say the smallest subspace of $V$ stable by $f$ which contains $W$.
\begin{enumerate}
\item $K\emptyset \oplus W$ generates the pre-Lie algebra $T(V,f)$ if, and only if, $\overline{W}=V$ and $f(V)=V$.
\item $K\emptyset \oplus W$ generates the Com-Pre-Lie algebra $T(V,f)$ if, and only if, $\overline{W}=V$ and the codimension of $f(V)$ in $V$ is $\leq 1$.
\end{enumerate}\end{enumerate}\end{cor}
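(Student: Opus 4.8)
The plan is to reduce both statements to Theorem \ref{33} by isolating the role of the empty word. Part 1 is merely a reformulation: the smallest pre-Lie (resp.\ Com-Pre-Lie) subalgebra of $T_+(V)$ containing $V$ is exactly $Im(\phi_{PL})$ (resp.\ $Im(\phi_{CPL})$), so the statement ``$V$ generates $T_+(V)$'' means precisely that $\phi_{PL}$ (resp.\ $\phi_{CPL}$) is surjective, and Theorem \ref{33} says this happens if, and only if, $f(V)=V$ (resp.\ the codimension of $f(V)$ in $V$ is $\leq 1$).

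For Part 2 the key observation is that $D:=(-)\bullet\emptyset$ is a derivation of the whole Com-Pre-Lie structure $(T(V,f),\shuffle,\bullet)$: taking $c=\emptyset$ in Theorem \ref{2}(1) gives $D(a\shuffle b)=D(a)\shuffle b+a\shuffle D(b)$, and taking $c=\emptyset$ in the pre-Lie identity of Theorem \ref{2}(3), together with $\emptyset\bullet b=0$, gives $D(a\bullet b)=D(a)\bullet b+a\bullet D(b)$; moreover $D(v)=f(v)$ for $v\in V$, and $\emptyset$ kills everything on the left. From this I would obtain the structural fact: letting $C$ be the pre-Lie (resp.\ Com-Pre-Lie) subalgebra of $T_+(V,f)$ generated by $\overline{W}$, the space $\K\emptyset\oplus C$ is a pre-Lie (resp.\ Com-Pre-Lie) subalgebra of $T(V,f)$ and is precisely the subalgebra generated by $\K\emptyset\oplus W$. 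Indeed $\overline{W}$ is $f$-stable, so $D(\overline{W})=f(\overline{W})\subseteq\overline{W}\subseteq C$, whence $D(C)\subseteq C$ by the derivation property and an induction over the generating process; this makes $\K\emptyset\oplus C$ a subalgebra, it obviously contains $\K\emptyset\oplus W$, and conversely the subalgebra $A$ generated by $\K\emptyset\oplus W$ contains $W$, hence $W\bullet\emptyset=f(W)$, hence every $f^k(W)$, hence $\overline{W}$, hence $C$, hence $\K\emptyset\oplus C$.

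It then remains to run a degree argument. Since $\bullet$ and $\shuffle$ are graded by the length of words, and $\emptyset\bullet(-)=0$ while $(-)\bullet\emptyset$ preserves length, the degree-one component of $\K\emptyset\oplus C$ is exactly $\overline{W}$; so if $\K\emptyset\oplus W$ generates $T(V,f)$ then necessarily $\overline{W}=V$. Conversely, when $\overline{W}=V$ the subalgebra generated by $\K\emptyset\oplus W$ is $\K\emptyset\oplus C$ with $C$ now the pre-Lie (resp.\ Com-Pre-Lie) subalgebra of $T_+(V,f)$ generated by $V$, that is $Im(\phi_{PL})$ (resp.\ $Im(\phi_{CPL})$). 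Hence $\K\emptyset\oplus W$ generates $T(V,f)$ if, and only if, $\overline{W}=V$ and $\phi_{PL}$ (resp.\ $\phi_{CPL}$) is surjective, which by Theorem \ref{33} is equivalent to $\overline{W}=V$ together with $f(V)=V$ (resp.\ with the codimension of $f(V)$ in $V$ being $\leq 1$). The only genuinely delicate point is verifying that $D=(-)\bullet\emptyset$ stabilises the generated subalgebra, so that $\K\emptyset\oplus C$ is closed under $\bullet$; it is essential here to work with the $f$-stable hull $\overline{W}$ rather than with $W$ itself, and everything else is bookkeeping with the length grading plus a direct appeal to Theorem \ref{33}.
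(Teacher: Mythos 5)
Your proof is correct and follows essentially the same route as the paper: part 1 is the same reformulation of Theorem \ref{33} via the universal property of the free (Com-)pre-Lie algebra, and for part 2 the paper likewise identifies the subalgebra generated by $\K\emptyset\oplus W$ with $\K\emptyset\oplus C$, where $C$ is the subalgebra generated by $\overline{W}$, and then compares degree-one components and invokes part 1. Your explicit appeal to the derivation property of $(-)\bullet\emptyset$ (for both $\shuffle$ and $\bullet$) is, if anything, a cleaner justification of the stability of $C$ under $(-)\bullet\emptyset$ than the paper's direct computation on words with letters in $\overline{W}$.
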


\begin{proof} 1. It is a direct consequence of theorem \ref{33}.\\

2. (a) We denote by $A$ the pre-Lie subalgebra of $T(V,f)$ generated by $K\emptyset \oplus W$ and by $A'$ the pre-Lie subalgebra of $T(V,f)$ generated 
by $\overline{W}$. Let us prove that $A=K\emptyset \oplus A'$. First, for all $w_1,\ldots, w_k \in \overline{W}$:
$$w_1\ldots w_k\bullet \emptyset=\sum_{i=1}^k w_1\ldots w_{i-1}f(w_i)w_{i+1}\ldots w_k \in A',$$
as $\overline{W}$ is stable under $f$. As $\emptyset \circ w=0$ for any $w \in A'$, $K\emptyset\oplus A'$ is a pre-Lie subalgebra which contains $W$,
so it contains $A$. For any $x \in W$, for any $k\geq 0$:
$$f^k(x)=(\ldots ((x\bullet\emptyset)\bullet\emptyset)\ldots)\bullet\emptyset \in A,$$
so $\overline{W} \subseteq A$, which implies $A'\subseteq A$ and $K\emptyset \oplus A'\subseteq A$.

$\Longrightarrow$. If $A=T(V,f)$, then $A'=T(V,f)$. Comparing the homogeneous component of degree $1$, $\overline{W}=V$.
By the first point, $f(V)=V$.

$\Longleftarrow$. In this case, $A'=T_+(V)$, so $A=K\emptyset \oplus T_+(V)=T(V)$. \\

Point 2 is proved similarly. \end{proof}

\section{Examples}

\subsection{The diagonalizable case}

\label{sFdB} We here assume that the endomorphism $f$ is diagonalizable. Let $(x_d)_{d\in \D}$ be a basis of $V$, 
such that $f(x_d)=\lambda_d x_d$ for all $d\in \D$.
We now construct the pre-Lie algebra $T_+(V,f)$ as the dual of a Hopf subalgebra of the Connes-Kreimer Hopf algebra $\h_{CK}^\D$ of rooted trees
decorated by $\D$ \cite{Kreimer1,Broadhurst1}. As an algebra, it is the free commutative, associative algebra generated by 
the set of rooted trees decorated by $\D$. Its coproduct is given by admissible cuts : for any tree $t$,
$$\Delta(t)=\sum_{\mbox{\scriptsize $c$ admissible cut of $t$}} R^c(t) \otimes P^c(t).$$
For example:
$$\Delta(\tdquatretrois{$a$}{$c$}{$d$}{$b$})=\tdquatretrois{$a$}{$c$}{$d$}{$b$}\otimes 1+1\otimes \tdquatretrois{$a$}{$c$}{$d$}{$b$}
+\tdtroisdeux{$a$}{$c$}{$d$}\otimes \tdun{$d$}+\tdtroisun{$a$}{$c$}{$b$}\otimes \tdun{$d$}+\tddeux{$a$}{$b$}\otimes \tddeux{$c$}{$d$}
+\tddeux{$a$}{$c$}\otimes \tdun{$b$}\tdun{$d$}+\tdun{$a$}\otimes \tdun{$b$}\tddeux{$c$}{$d$}.$$ 
The enveloping algebra of $Prelie(\D)$ and $\h_{CK}^\D$ are in duality by the pairing \cite{Panaite,Hoffman}:
$$\langle-,-\rangle:\left\{\begin{array}{rcl}
\U(Prelie(\D))\times \h_{CK}^\D&\longrightarrow&\K\\
(F,G)&\longrightarrow&\delta_{F,G}s_F,
\end{array}\right.$$
where $s_F$ is the number of symmetries of the forest $F$. As the characteristic of $\K$ is zero, this pairing is non degenerate. 
Consequently, the subspace of $\h_{CK}^\D$ generated by the set of rooted trees inherits a pre-Lie cobracket defined by
$\delta=(\pi \otimes \pi)\circ \Delta$, where $\pi$ is defined by:
$$\pi(F)=\begin{cases}
F&\mbox{if $F$ is tree},\\
0&\mbox{if $F$ is a forest which is not a tree}.
\end{cases}$$
 
Following \cite{Moscovici,Kreimer1}, for all $d \in \D$, we define a linear map $N_d:\h_{CK}^\D\longrightarrow \h_{CK}^\D$ by:
$$N_d(F)=\sum_{s\in V(F)} \lambda_{d(s)} F\bullet_s \tdun{$d$},$$
for any forest $F$. We shall also need the following map:
$$\phi_\lambda:\left\{\begin{array}{rcl}
\h_{CK}^\D&\longrightarrow&\h_{CK}^\D\\
F&\longrightarrow&\displaystyle \left(\sum_{s \in V(F)} \lambda_{d(s)}\right)F.
\end{array}\right.$$

\begin{prop} For all $d\in \D$:
\begin{enumerate}
\item For all $x,y\in \h_{CK}^\D$, $N_d(xy)=N_d(x)y+xN_d(y)$.
\item $N_d \circ \pi=\pi \circ N_d$.
\item For all $x\in \h_{CK}^\D$, $\Delta \circ N_d(x)=(N_d \otimes Id+Id \otimes N_d)\circ \Delta(x)+(1\otimes \tdun{$d$}).(\phi_\lambda \otimes Id)
\circ \Delta(x)$.
\item If $x$ is a linear span of trees, $\delta\circ N_d(x)=(N_d \otimes Id+Id\otimes N_d)\circ \delta(x)+\phi_\lambda(x) \otimes \tdun{$d$}$.
\end{enumerate}\end{prop}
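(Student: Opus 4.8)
The plan is to establish the four assertions in order: points 1 and 2 are easy structural facts about grafting, point 3 is the main computation, and point 4 is a formal consequence of point 3 and point 2.

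For point 1, grafting a single new vertex onto a vertex $s$ only alters the connected component of $s$; so for a forest written as a product $F=F_1F_2$ the vertex set splits as $V(F)=V(F_1)\sqcup V(F_2)$, and the sum defining $N_d(F_1F_2)$ splits into $N_d(F_1)F_2+F_1N_d(F_2)$, giving the derivation identity by bilinearity. For point 2, grafting neither merges nor splits connected components, so $F\bullet_s\tdun{$d$}$ is a tree if and only if $F$ is one; hence if $F$ is a tree then $N_d(F)$ is a linear combination of trees and $\pi N_d(F)=N_d(F)=N_d\pi(F)$, whereas if $F$ is a forest with at least two trees then so is every $F\bullet_s\tdun{$d$}$ and $\pi N_d(F)=0=N_d\pi(F)$; the general case follows by linearity.

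For point 3, I would fix a decorated forest $F$ and classify the admissible cuts of $t'_s:=F\bullet_s\tdun{$d$}$, where $v_0$ is the new vertex and $e$ the edge from $s$ to $v_0$. An admissible cut $c$ of $t'_s$ either avoids $e$, in which case its restriction $c'$ to $F$ is an admissible cut, $v_0$ remains attached to $s$, and $(R^c(t'_s),P^c(t'_s))$ equals $(R^{c'}(F)\bullet_s\tdun{$d$},\,P^{c'}(F))$ when $s\in R^{c'}(F)$ and $(R^{c'}(F),\,P^{c'}(F)\bullet_s\tdun{$d$})$ when $s\in P^{c'}(F)$; or it contains $e$, in which case admissibility of $c$ amounts to admissibility of the restriction $c'$ in $F$ together with $s\in R^{c'}(F)$, and then $v_0$ appears as an isolated tree in the pruned part, giving $(R^{c'}(F),\,P^{c'}(F)\,\tdun{$d$})$. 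Multiplying by $\lambda_{d(s)}$, summing over $s\in V(F)$ and exchanging the order of summation, the $e\notin c$ terms with $s\in R^{c'}(F)$ assemble into $(N_d\otimes Id)\circ\Delta(F)$, those with $s\in P^{c'}(F)$ into $(Id\otimes N_d)\circ\Delta(F)$, and the $e\in c$ terms into $\sum_{c'}\big(\sum_{s\in V(R^{c'}(F))}\lambda_{d(s)}\big)R^{c'}(F)\otimes P^{c'}(F)\,\tdun{$d$}=(1\otimes\tdun{$d$})\cdot(\phi_\lambda\otimes Id)\circ\Delta(F)$. This is the stated formula, which extends to all of $\h_{CK}^\D$ by linearity.

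Point 4 follows by applying $\pi\otimes\pi$ to the identity of point 3, for $x$ a linear span of trees. By point 2, $\pi\circ N_d=N_d\circ\pi$, so the first two terms give $(N_d\otimes Id+Id\otimes N_d)\circ(\pi\otimes\pi)\circ\Delta(x)=(N_d\otimes Id+Id\otimes N_d)\circ\delta(x)$. Writing $\Delta(x)=\sum x^{(1)}\otimes x^{(2)}$, the remaining term is $(\pi\otimes\pi)\big(\sum\phi_\lambda(x^{(1)})\otimes x^{(2)}\,\tdun{$d$}\big)$, and $x^{(2)}\,\tdun{$d$}$ is a single tree only when $x^{(2)}$ is the empty forest; as $x$ is a span of trees the unique such summand is $x\otimes 1$, so this term equals $\pi(\phi_\lambda(x))\otimes\pi(\tdun{$d$})=\phi_\lambda(x)\otimes\tdun{$d$}$, since $\phi_\lambda(x)$ is again a span of trees. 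I expect the main obstacle to be the bookkeeping in point 3, in particular checking that cutting the new edge $e$ is admissible exactly when $s$ lies in the root component $R^{c'}(F)$: this is precisely what turns the naive sum over all vertices of $F$ into the sum over $V(R^{c'}(F))$, i.e. into $\phi_\lambda$ applied to $R^{c'}(F)$.
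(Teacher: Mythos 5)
Your proof is correct and follows essentially the same route as the paper: points 1 and 2 by the splitting of $V(F_1F_2)$ and the fact that grafting preserves the number of connected components, point 3 by the same three-way classification of admissible cuts of $F\bullet_s\tdun{$d$}$ (cut avoiding the new edge with $s$ in the trunk, cut avoiding it with $s$ in the pruned part, and cut containing it — admissible exactly when $s$ lies in the trunk), and point 4 by applying $\pi\otimes\pi$ to point 3 and using point 2. Your observation that only the summand $x\otimes 1$ of $\Delta(x)$ survives $\pi\otimes\pi$ in the last term is exactly the paper's argument, stated a bit more explicitly.
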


\begin{proof} 1. This is obvious if $x$ and $y$ are forests, as $V(xy)=V(x)\sqcup V(y)$.\\

2. If $F$ is a tree, then $N_d(F)$ is a linear span of trees, so $N_d \circ \pi(F)=N_d(F)=\pi \circ N_d(F)$.
If $F$ is a forest which is not a tree, then $N_d(F)$ is a linear span of forests which are not trees, so $N_d \circ \pi(F)=0=\pi \circ N_d(F)$. \\ 

3. Let $F$ be a forest, and $s\in V(F)$. We denote by $l$ the leaf decorated by $d$ grafted onto $F$ to obtain $G=F \circ_s \tdun{$d$}$.
There are three types of admissible cuts of $G$:
\begin{itemize}
\item Admissible cuts $c$ such that $s \in V(P^c(G))$. Then $R^c(G)=R^{c_{\mid F}}(F)$ and $P^c(G)=R^{c_{\mid F}}(F)\bullet_s \tdun{$d$}$.
\item Admissible cuts $c$ such that $s,\ell \in V(R^c(G))$. Then $R^c(G)=R^{c_{\mid F}}(F)\bullet_s \tdun{$d$}$ and $P^c(G)=R^{c_{\mid F}}(F)$.
\item Admissible cuts $c$ such that $s \in V(R^c(G))$ and $\ell \in V(P^c(G))$.
Then  $R^c(G)=R^{c_{\mid F}}(F)$ and $P^c(G)= \tdun{$d$}R^{c_{\mid F}}(F)$
\end{itemize}
Summing all these possibilities,we obtain:
\begin{align*}
\Delta(N_d(F))&=\sum_{\mbox{\scriptsize $c$ admissible cut of $F$}}\sum_{s \in V(P^c(F))} \lambda_{d(s)} R^c(F) \otimes P^c(F) \bullet_s \tdun{$d$}\\
&=\sum_{\mbox{\scriptsize $c$ admissible cut of $F$}}\sum_{s \in V(R^c(F))} \lambda_{d(s)} R^c(F)  \bullet_s \tdun{$d$}\otimes P^c(F)\\
&=\sum_{\mbox{\scriptsize $c$ admissible cut of $F$}}\sum_{s \in V(R^c(F))} \lambda_{d(s)} R^c(F) \otimes  \tdun{$d$}P^c(F)\\
&=\sum_{\mbox{\scriptsize $c$ admissible cut of $F$}} N_d(R^c(F))\otimes P^c(F)+R^c(F)\otimes N_d(P^c(F))
+\phi_\lambda(R^c(F)) \otimes  \tdun{$d$}P^c(F)\\
&=(N_d \otimes Id+Id \otimes N_d) \circ \Delta(F)+1\otimes \tdun{$d$}).(\phi_\lambda\circ Id)\circ \Delta(F).
\end{align*}

4. Let $t$ be a tree. We put $\Delta(t)=t\otimes 1+\sum t'\otimes t''$, where $\sum t'\otimes t''$
is a sum of tensors such that the left part is a forest and the right part is a tree. Then:
\begin{align*}
(\pi \otimes \pi)\left((1\otimes \tdun{$d$}).(\phi_\lambda \otimes Id)\circ \Delta(t)\right)&=
(\pi \otimes \pi)\left(\phi_\lambda(t) \otimes \tdun{$d$}+\sum \phi_\lambda(t') \otimes t''\tdun{$d$}\right)\\
&=\phi_\lambda(t)\otimes \tdun{$d$}.
\end{align*}
Hence:
\begin{align*}
\delta \circ N_d(t)&=(\pi \otimes \pi)\circ \Delta\circ N_d(x)\\
&=(\pi \otimes \pi)\circ (N_d \otimes Id+Id \otimes N_d)\circ \Delta(t)+(\pi \otimes \pi)\left((1\otimes \tdun{$d$}).(\phi_\lambda \otimes Id)\circ \Delta(t)\right)\\
&=(N_d \otimes Id+Id \otimes N_d)\circ (\pi \otimes \pi) \circ \Delta(t)+\phi_\lambda(t)\otimes \tdun{$d$}\\
&=(N_d \otimes Id+Id \otimes N_d)\circ \delta(t)+\phi_\lambda(t)\otimes \tdun{$d$},
\end{align*}
which proves the last point. \end{proof}

\begin{defi} Let $d_1,\ldots,d_n \in \D$, $n \geq 1$. We inductively defined $t_{d_1\ldots d_n}$ in the following way:
\begin{enumerate}
\item If $n=1$, $t_{d_1}=\tdun{$d_1$}\:$.
\item If $n \geq 2$, $t_{d_1\ldots d_n}=N_{d_n}(t_{d_1\ldots d_{n-1}})$.
\end{enumerate}
We denote by $\h_\lambda$ the subalgebra of $\h_{CK}^\D$ generated by the elements $t_{d_1\ldots d_n}$.
\end{defi}

{\bf Examples.} If $a,b,c,d \in \D$:
\begin{align*}
t_a&=\tdun{$a$},\\
t_{ab}&=\lambda_a \tddeux{$a$}{$b$},\\
t_{abc}&=\lambda_a^2 \tdtroisun{$a$}{$c$}{$b$}+\lambda_a\lambda_b \tdtroisdeux{$a$}{$b$}{$c$},\\
t_{abcd}&=\lambda_a^3 \tdquatreun{$a$}{$d$}{$c$}{$b$}+\lambda_a^2\lambda_b \tdquatredeux{$a$}{$c$}{$b$}{$d$}
+\lambda_a^2\lambda_c \tdquatredeux{$a$}{$b$}{$c$}{$d$}+\lambda_a^2\lambda_b \tdquatredeux{$a$}{$d$}{$b$}{$c$}
+\lambda_a\lambda_b^2 \tdquatrequatre{$a$}{$b$}{$d$}{$c$}+\lambda_a\lambda_b\lambda_c \tdquatrecinq{$a$}{$b$}{$c$}{$d$}
\end{align*}

\begin{prop}
$\h_\lambda$ is stable under $N_d$ for all $d \in \D$ and under $\phi_\lambda$, and is a Hopf subalgebra of $\h_{CK}^\D$. 
\end{prop}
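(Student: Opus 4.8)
The plan is to exploit the fact that both $N_d$ and $\phi_\lambda$ are \emph{derivations} of the commutative algebra $\h_{CK}^\D$, together with the recursion $t_{d_1\ldots d_n}=N_{d_n}(t_{d_1\ldots d_{n-1}})$ defining the generators of $\h_\lambda$, and to treat the three assertions in the order: stability under the $N_d$'s, stability under $\phi_\lambda$, stability under $\Delta$, then the Hopf property. First I would record two elementary facts. The maps $N_d$ are derivations by point~1 of the previous proposition, and $\phi_\lambda$ is a derivation as well, since for two forests $F,G$ one has $V(FG)=V(F)\sqcup V(G)$, hence $\phi_\lambda(FG)=\phi_\lambda(F)G+F\phi_\lambda(G)$. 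Next, an immediate induction on $n$ shows that every rooted tree occurring in $t_{d_1\ldots d_n}$ has exactly $n$ vertices, decorated, as a multiset, by $d_1,\ldots,d_n$: indeed $N_{d_n}$ is obtained by grafting on each tree one new leaf decorated by $d_n$. Two consequences: $t_{d_1\ldots d_n}$ is homogeneous of degree $n$ for the gradation of $\h_{CK}^\D$ by the number of vertices, and
\[
\phi_\lambda(t_{d_1\ldots d_n})=(\lambda_{d_1}+\cdots+\lambda_{d_n})\,t_{d_1\ldots d_n}.
\]

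Now the two stability statements for $N_d$ and $\phi_\lambda$ are almost immediate. Since $\h_\lambda$ is, by definition, the subalgebra generated by the $t_{d_1\ldots d_n}$'s and $N_d$ is a derivation, $N_d(\h_\lambda)\subseteq\h_\lambda$ follows from $N_d(t_{d_1\ldots d_n})=t_{d_1\ldots d_n d}\in\h_\lambda$. Similarly, $\phi_\lambda$ is a derivation acting by a scalar on each generator, so applied to any monomial $t_{w_1}\cdots t_{w_k}$ it returns a scalar multiple of that monomial; hence $\phi_\lambda(\h_\lambda)\subseteq\h_\lambda$.

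For the coproduct I would prove by induction on $n$ that $\Delta(t_{d_1\ldots d_n})\in\h_\lambda\otimes\h_\lambda$. The case $n=1$ is $\Delta(\tdun{$d_1$})=\tdun{$d_1$}\otimes 1+1\otimes\tdun{$d_1$}$. For the inductive step, apply point~3 of the previous proposition with $x=t_{d_1\ldots d_{n-1}}$ and $d=d_n$:
\[
\Delta(t_{d_1\ldots d_n})=(N_{d_n}\otimes Id+Id\otimes N_{d_n})\circ\Delta(t_{d_1\ldots d_{n-1}})+(1\otimes\tdun{$d_n$})\cdot(\phi_\lambda\otimes Id)\circ\Delta(t_{d_1\ldots d_{n-1}}).
\]
By the induction hypothesis $\Delta(t_{d_1\ldots d_{n-1}})\in\h_\lambda\otimes\h_\lambda$; since $N_{d_n}$ and $\phi_\lambda$ preserve $\h_\lambda$, since $\tdun{$d_n$}=t_{d_n}\in\h_\lambda$, and since $\h_\lambda$ is a subalgebra, the right-hand side lies in $\h_\lambda\otimes\h_\lambda$. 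As $\Delta$ is multiplicative and $\h_\lambda\otimes\h_\lambda$ is a subalgebra of $\h_{CK}^\D\otimes\h_{CK}^\D$, this gives $\Delta(\h_\lambda)\subseteq\h_\lambda\otimes\h_\lambda$; the counit is obviously compatible, so $\h_\lambda$ is a sub-bialgebra.

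Finally, $\h_\lambda$ is graded by the number of vertices (each $t_{d_1\ldots d_n}$ being homogeneous of degree $n$) and connected, its degree-zero part being $\K$; a connected graded sub-bialgebra of a Hopf algebra is automatically stable under the antipode, which is given by the standard recursive formula on the augmentation ideal and therefore stays inside $\h_\lambda$. Hence $\h_\lambda$ is a Hopf subalgebra of $\h_{CK}^\D$. I expect no serious obstacle; the only points needing care are the logical order (the coproduct computation genuinely consumes the $\phi_\lambda$-stability as input, through point~3 of the previous proposition) and the small observation that every tree appearing in $t_{d_1\ldots d_n}$ carries the decoration multiset $\{d_1,\ldots,d_n\}$, which is what simultaneously makes $\phi_\lambda$ act diagonally and supplies the grading used in the last step.
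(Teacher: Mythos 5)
Your proof is correct and follows essentially the same route as the paper: stability under $N_d$ and $\phi_\lambda$ via the derivation property and the identities $N_d(t_w)=t_{wd}$, $\phi_\lambda(t_w)=(\lambda_{d_1}+\cdots+\lambda_{d_n})t_w$, then induction on the length of $w$ using the compatibility formula $\Delta\circ N_d=(N_d\otimes Id+Id\otimes N_d)\circ\Delta+(1\otimes t_d)\cdot(\phi_\lambda\otimes Id)\circ\Delta$. Your extra observations (the decoration multiset of the trees in $t_w$, and the graded-connected argument for the antipode) only fill in details the paper leaves implicit.
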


\begin{proof}  We consider
$$A=\{x \in \h_\lambda\:\mid \phi_\lambda(x),N_d(x) \in \h_\lambda\mbox{ for all }d\in \D\}/$$
As $N_d$ and $\phi_\lambda$ are derivations, this is a subalgebra of $\h_\lambda$.
For all nonempty word $w=d_1\ldots d_n$, $N_d(t_w)=t_{wd}$ and $\phi_\lambda(t_w)=(\lambda_{d_1}+\ldots+\lambda_{d_n})t_w$,
so $A$ contains the generators of $\h_\lambda$: $A=\h_\lambda$, which proves the first point.
Let us prove that $\Delta(t_w) \in \h_\lambda \otimes \h_\lambda$ for all nonempty word $w$ by induction on the length $n$.
If $n=1$, then $t_w=\tdun{$d$}$ for a particular $d$, so $\Delta(t_w)=t_w\otimes 1+1\otimes t_w \in \h_\lambda \otimes \h_\lambda$.
Let us assume the result at rank $n-1$. We put $w=w'd$, with $d \in \D$. By the induction hypothesis, $\Delta(t_w') \in \h_\lambda \otimes \h_\lambda$.
Then:
\begin{align*}
\Delta(t_w)&=\Delta \circ N_d(t_{w'})\\
&=(N_d \otimes Id+Id \otimes N_d)\circ \Delta(t_{w'})+(1\otimes \tdun{$d$}).(\phi_\lambda \otimes Id)\circ \Delta((t_{w'}).
\end{align*}
As $\h_\lambda$ is stable under $N_d$ and $\phi_\lambda$ and contains $\tdun{$d$}$, this belongs to $\h_\lambda \otimes \h_\lambda$.
So $\h_\lambda$ is a Hopf subalgebra of $\h_{CK}^\D$. \end{proof} \\

As its generators are linear spans of trees, $\h_\lambda$ is stable under $\pi$. As it is also stable under $\Delta$,
the vector space generated by the elements $t_w$ is stable under $\delta$, so is a pre-Lie coalgebra. Let us give a formula for this pre-Lie cobracket.

\begin{prop} 
Let $d_1,\ldots,d_n \in \D$. For all $J=\{i_1,\ldots,i_k\}\subseteq \{1,\ldots,n\}$, with $i_1<\ldots<i_k$, we put:
\begin{itemize}
\item $w_J=d_{i_1}\ldots d_{i_k}$.
\item $m(J)=\max\{i\mid \{1,\ldots,i\} \subseteq J\}$, with the convention $m(J)=0$ if $1 \notin J$. 
\end{itemize}
Then:
$$\delta(t_{d_1\ldots d_n})=\sum_{I\subsetneq \{1,\ldots,n\}} \left(\sum_{i=1}^{m(I)} \lambda_i\right) t_{w_I}\otimes t_{w_{I^c}}.$$
\end{prop}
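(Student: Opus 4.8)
The plan is to proceed by induction on $n$, using the recursion $t_{d_1\ldots d_n}=N_{d_n}(t_{d_1\ldots d_{n-1}})$ together with the formula for $\delta\circ N_d$ established in the previous proposition, namely $\delta\circ N_d(x)=(N_d\otimes Id+Id\otimes N_d)\circ\delta(x)+\phi_\lambda(x)\otimes\tdun{$d$}$ when $x$ is a linear span of trees. The base case $n=1$ is immediate since $t_{d_1}=\tdun{$d_1$}$ is primitive, so $\delta(t_{d_1})=0$, and the right-hand side is empty because the only proper subset $I\subsetneq\{1\}$ is $I=\emptyset$, for which $m(I)=0$ and the inner sum vanishes.

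For the inductive step, I would write $w=d_1\ldots d_n$, $w'=d_1\ldots d_{n-1}$, apply $\delta\circ N_{d_n}$ to $t_{w'}$, and substitute the inductive formula for $\delta(t_{w'})$ as a sum over proper subsets $I\subsetneq\{1,\ldots,n-1\}$. I then have to match three contributions against the claimed sum over $I\subsetneq\{1,\ldots,n\}$: the term $(N_{d_n}\otimes Id)\circ\delta(t_{w'})$ produces the subsets $I$ with $n\notin I$ and $I^c\neq\{n\}$, using $N_{d_n}(t_{w_I})=t_{w_I d_n}$ and noting that appending $n$ to the complement does not change $m(I)$ since $n$ is the largest index; the term $(Id\otimes N_{d_n})\circ\delta(t_{w'})$ produces the subsets $I$ with $n\in I$, via $N_{d_n}(t_{w_{I^c}})=t_{w_{I^c}d_n}=t_{w_{(I\cup\{n\})^c\text{-complement}}}$ — here $m(I)$ is unchanged when $n$ is added to $I$ \emph{provided} $\{1,\ldots,n-1\}\not\subseteq I$, which holds because $I$ was a proper subset of $\{1,\ldots,n-1\}$; and finally $\phi_\lambda(t_{w'})\otimes\tdun{$d_n$}=\bigl(\sum_{i=1}^{n-1}\lambda_i\bigr)t_{w'}\otimes t_{d_n}$ supplies the single missing subset $I=\{1,\ldots,n-1\}$, for which $m(I)=n-1$ and $I^c=\{n\}$.

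The main obstacle I anticipate is purely bookkeeping: verifying that the coefficients $\sum_{i=1}^{m(I)}\lambda_i$ track correctly through the three contributions, i.e. that adding the index $n$ either to $I$ or to $I^c$ genuinely leaves $m(I)$ unchanged in every relevant case, and that the three families of subsets $I\subsetneq\{1,\ldots,n\}$ — those with $n\notin I$ and $|I^c|\geq 2$, those with $n\in I$, and the singleton-complement case $I=\{1,\ldots,n-1\}$ — partition $\{I:I\subsetneq\{1,\ldots,n\}\}$ with no overlaps or omissions. One should also note that $\phi_\lambda$ commutes appropriately and that $t_{w'}$ is indeed a linear span of trees (its generators are, and $N_d$, $\phi_\lambda$ preserve this), so that the cited formula for $\delta\circ N_d$ applies. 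Once these combinatorial identities on $m(I)$ are checked, the three pieces assemble exactly into the claimed formula, completing the induction.
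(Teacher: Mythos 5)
Your plan is correct and follows essentially the same route as the paper: induction on $n$ via $t_{d_1\ldots d_n}=N_{d_n}(t_{d_1\ldots d_{n-1}})$ and the formula $\delta\circ N_{d}=(N_{d}\otimes Id+Id\otimes N_{d})\circ\delta+\phi_\lambda(\cdot)\otimes t_d$, with the three contributions matched to the subsets $I$ containing $n$, the subsets $I$ not containing $n$ other than $\{1,\ldots,n-1\}$, and $I=\{1,\ldots,n-1\}$, exactly as in the paper's proof. The one slip is a label swap: $(N_{d_n}\otimes Id)$ appends $d_n$ to the \emph{left} factor $t_{w_I}$ and therefore yields the subsets with $n\in I$, while $(Id\otimes N_{d_n})$ yields those with $n\notin I$ --- the opposite of what you wrote --- but both of your justifications for $\sum_{i=1}^{m(I)}\lambda_i$ being unchanged are the correct ones, merely attached to the interchanged cases, so the argument assembles as claimed.
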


\begin{proof}
By induction on $n$. It is obvious if $n=1$, as $\delta(t_{d_1})=0$. Let us assume the result at rank $n-1$.
\begin{align*}
\delta(t_{d_1\ldots d_n})&=\delta \circ N_{d_n}(t_{d_1\ldots d_{n-1}})\\
&=(N_{d_n} \otimes Id+Id\otimes N_{d_n})\circ \delta(t_{d_1\ldots d_{n-1}})+\phi_\lambda(t_{d_1\ldots d_{n-1}})\otimes \tdun{$d_n$}\\
&=\sum_{I\subsetneq \{1,\ldots,n-1\}}\left(\sum_{i=1}^{m(I)}\lambda_{d_i}\right)(N_{d_n}\otimes Id+Id \otimes N_{d_n})(t_{w_I}\otimes 
t_{w_{\{1,\ldots,n-1\}\setminus I}})\\
&+\left(\sum_{i=1}^{n-1} \lambda_{d_i} \right)t_{d_1\ldots d_{n-1}}\otimes t_{d_n}\\
&=\sum_{I\subsetneq \{1,\ldots,n-1\}}\left(\sum_{i=1}^{m(I)}\lambda_{d_i}\right)(t_{w_{I\cup\{n\}}}\otimes 
t_{w_{\{1,\ldots,n-1\}\setminus I}}+t_{w_I} \otimes t_{w_{\{1,\ldots,n\}\setminus I}})\\
&+\left(\sum_{i=1}^{n-1} \lambda_{d_i} \right)t_{d_1\ldots d_{n-1}}\otimes t_{d_n}\\
&=\sum_{J\subsetneq \{1,\ldots,n\}, n\in J}\left(\sum_{i=1}^{m(J)}\lambda_{d_i}\right) t_{w_J}\otimes t_{w_{J^c}}
+\sum_{J\subsetneq \{1,\ldots,n-1\}}\left(\sum_{i=1}^{m(J)}\lambda_{d_i}\right) t_{w_J}\otimes t_{w_{J^c}}\\
&+\left(\sum_{i=1}^{n-1} \lambda_{d_i} \right)t_{d_1\ldots d_{n-1}}\otimes t_{d_n}\\
&=\sum_{J\subsetneq \{1,\ldots,n\}}\left(\sum_{i=1}^{m(J)}\lambda_{d_i}\right) t_{w_J}\otimes t_{w_{J^c}}.
\end{align*}
So the result holds for all $n$. \end{proof} \\

Let us assume that all the $\lambda_d$ are non zero. For all word $w=d_1\ldots d_n$, in $t_w$ appears only one ladder,
that is to say a tree with no ramification: its vertices, from the root to the unique leaf, are decorated by $d_1,\ldots,d_n$,
and its coefficient is $\lambda_{d_1} \ldots \lambda_{d_{n-1}}$.
So the elements $t_w$ are linearly independent. Dually, identifying the dual basis $(t^*_{d_1\ldots d_n})_{d_1,\ldots,d_n\in\D}$ 
of the graded dual of the pre-Lie coalgebra of the generators of $\h_\lambda$ and the basis $(x_{d_1}\ldots x_{d_n})_{d_1,\ldots,d_n \in \D}$,
we obtain a pre-Lie algebra structure on the space $T(V)$. If $d_1,\ldots,d_{k+l} \in \D$:
\begin{align*}
x_{d_1}\ldots x_{d_k} \bullet x_{d_{k+1}}\ldots x_{d_k}&=\sum_{\sigma \in Sh(k,l)}
\left(\sum_{i=1}^{m_k(\sigma)} \lambda_{d_i}\right) (\sigma.(x_{d_1}\ldots x_{d_{k+l}}))\\
&=\sum_{\sigma \in Sh(k,l)}
\left(\sum_{i=1}^{m_k(\sigma)} Id^{\otimes(i-1)}\otimes f \otimes Id^{\otimes (n-i)}\right) (\sigma.(x_{d_1}\ldots x_{d_{k+l}})),
\end{align*}
as if $i\leq m_k(\sigma)$, $\sigma^{-1}(i)=i$ and $f(x_{d_{\sigma^{-1}(i)}})=\lambda_{d_i} x_{d_i}$.
Hence, we obtain the pre-Lie algebra $(T(V,f),\circ)$.

If $x$ is an eigenvector of $f$ of eigenvalue $\lambda$, the preceding formula gives:
$$x^k \bullet x^l=\lambda \binom{k+l}{k} x^{k+l}.$$
Let us assume that $\lambda \neq 0$. For all $k\geq 0$, we put $\displaystyle y_k=\frac{(k+1)!}{\lambda}x^k$. 
As the characteristic of $\K$ is zero, $(y_k)_{k\geq 0}$ is a basis of $\K[x]$. We obtain:
$$y_k\bullet y_l=\frac{k(k+1)}{k+l+1}y_{k+l},$$
so $[y_k,y_l]=(k-l)y_{k+l}$. Hence, if $\lambda\neq 0$, $\K[x]_+\subseteq T(V,f)$ is isomorphic, as a Lie algebra, 
to the Faà di Bruno Lie algebra \cite{Figueroa,Foissy1,Foissy2}.\\

{\bf Remark.} In particular, if $V$ is one-dimensional and $f=Id_V$, we can work with a single decoration and delete it everywhere. 
The dual of the enveloping algebra of $T(V,f)$ is  the subalgebra of $\h_{CK}$ generated by the elements $t_n$ inductively defined
by $t_1=\tun$ and $t_{n+1}=N(t_n)$:
\begin{align*}
t_1&=\tun,\\
t_2&=\tdeux,\\
t_3&=\ttroisun+\ttroisdeux,\\
t_4&=\tquatreun+3\tquatredeux+\tquatrequatre+\tquatrecinq,\\
t_5&=\tcinqun+6\tcinqdeux+3\tcinqcinq+4\tcinqsix+4\tcinqhuit+\tcinqdix+3\tcinqonze+\tcinqtreize+\tcinqquatorze.
\end{align*}
This is the Connes-Moscovici subalgebra of $\h_{CK}$ \cite{Moscovici,Kreimer1}. \\

\subsection{Examples from Control theory}

\label{sCT} We now consider the group of Fliess operators \cite{Gray}. We fix an integer $n\geq 1$. We denote by $X^*$ the set of words
in letters $x_0,\ldots,x_n$. As a set, the group of Fliess operators is isomorphic to 
$G_{Fliess}=\K\langle \langle x_0,\ldots,x_n\rangle\rangle^n \approx \K^n\langle \langle x_0,\ldots,x_n\rangle\rangle$. 
Let us now describe the composition. Let $c,d=(d_1,\ldots,d_n) \in G_{Fliess}$. We put:
$$c=\sum_{w\in X^*}c_w w,$$
with $c_w \in \K^n$ for all $w \in X^*$.
\begin{itemize}
\item For all $0\leq i\leq n$, we consider:
$$\tilde{D}_{x_i}:\left\{\begin{array}{rcl}
G_{Fliess}&\longrightarrow&G_{Fliess}\\
e&\longrightarrow&x_ie+x_0(d_i \shuffle e)
\end{array}\right.$$
with $d_0=0$. If $w=x_{i_1}\ldots x_{i_k} \in X^*$, we put $\tilde{D}_w=\tilde{D}_{x_{i_1}}\circ \ldots \circ \tilde{D}_{x_{i_k}}$.
By convention, $\tilde{D}_\emptyset=Id_{G_{Fliess}}$.
\item The composition $c \tdiam d$ is defined by:
$$c \tdiam d=\sum_{w\in X^*} c_w \tilde{D}_w(\emptyset).$$
In particular, $\emptyset \tdiam d=\tilde{D}_\emptyset(\emptyset)=\emptyset$.
\item The product of $G_{Fliess}$ is then given by $c \diamond d=c\tdiam d+d$.
\end{itemize}

Let $0\leq i\leq n$, $c,d \in G_{Fliess}$. We keep the same notation as before.
\begin{align*}
x_ic\tdiam d&=\sum_{w \in X^*}c_w \tilde{D}_{x_iw}(\emptyset)\\
&=\sum_{w \in X^*}c_w \tilde{D}_{x_i}\tilde{D}_w(\emptyset)\\
&=\sum_{w\in X^*} c_w (x_i \tilde{D}_w(\emptyset)+x_0(d_i \shuffle \tilde{D}_w(\emptyset))\\
&=x_i\left(\sum_{w\in X^*} c_w \tilde{D}_w(\emptyset)\right)+x_0\left(d_i\shuffle \left(\sum_{w \in X^*} c_w \tilde{D}_w(\emptyset)\right)\right)\\
&=x_i(c \tdiam d)+x_0 (d_i \shuffle (c\tdiam d)).
\end{align*}
In other words:
\begin{itemize}
\item $\tdiam$ is $\K^n$-linear on the left.
\item For all $d \in \K\langle \langle x_0,\ldots,x_n\rangle\rangle^n$, $\emptyset \tdiam d=\emptyset$.
\item For all $c,d \in \K\langle \langle x_0,\ldots,x_n\rangle\rangle^n$, $x_ic\tdiam d=x_i(c\tdiam d)+x_0 (d_i \shuffle (c\tdiam d))$,
with the convention $d_0=0$.
\end{itemize}

{\bf Notations.} Let $\epsilon_i$ be the $i$-th element of the canonical basis. We put $G_i=\epsilon_i \K\langle \langle x_0,\ldots,x_n\rangle\rangle^n$.

\begin{lemma}\label{39}
If $c,d \in \K\langle\langle x_0,\ldots,x_n\rangle\rangle$:
\begin{align*}
(\epsilon_i\emptyset )\tdiam v&=\epsilon_i \emptyset,&
(\epsilon_i x_jd)\tdiam d&=\epsilon_ix_j(c \tdiam d)+\epsilon_i\delta_{i,j}x_0((c\tdiam d)\shuffle d).
\end{align*}\end{lemma}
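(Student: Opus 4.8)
The plan is to obtain the two identities by specializing to $G_i$ the three properties of $\tdiam$ on $G_{Fliess}$ established just above; the proof is then a short unwinding of definitions. The only point to be careful about is that the operators $\tilde{D}_{x_j}$ used to compute a composition $c'\tdiam d'$ depend on the right-hand argument $d'=(d'_1,\ldots,d'_n)$ through its components $d'_j$, and that for $d'=\epsilon_i d$ lying in $G_i$ one has $d'_j=\delta_{i,j}\,d$, with $d'_0=0$ by the convention $d_0=0$.

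First I would record that $G_i$ is stable under $\tdiam$. If the left argument is $\epsilon_i c$, then every coefficient $(\epsilon_i c)_w\in\K^n$ is a scalar multiple of $\epsilon_i$, so
$$\epsilon_i c\tdiam d'=\sum_{w\in X^*}(\epsilon_i c)_w\,\tilde{D}_w(\emptyset)\in\epsilon_i\,\K\langle\langle x_0,\ldots,x_n\rangle\rangle=G_i.$$
Writing this result as $\epsilon_i(c\tdiam d)$ defines the induced composition on $\K\langle\langle x_0,\ldots,x_n\rangle\rangle$ that appears on the right-hand sides of the two formulas.

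For the first identity, using that $\tdiam$ is $\K^n$-linear on the left together with the already noted identity $\emptyset\tdiam v=\emptyset$, one gets $(\epsilon_i\emptyset)\tdiam v=\epsilon_i(\emptyset\tdiam v)=\epsilon_i\emptyset$. For the second identity, I would apply the third displayed property of $\tdiam$ above, in the form $x_j e\tdiam d'=x_j(e\tdiam d')+x_0\bigl(d'_j\shuffle(e\tdiam d')\bigr)$, to $e=\epsilon_i c$ and $d'=\epsilon_i d$; since $\epsilon_i x_j c=x_j(\epsilon_i c)$, since $d'_j=\delta_{i,j}\,d$, since $G_i$ is stable, and since the shuffle product on $G_{Fliess}$ is componentwise and commutative, this yields
$$(\epsilon_i x_j c)\tdiam(\epsilon_i d)=\epsilon_i x_j(c\tdiam d)+\delta_{i,j}\,\epsilon_i x_0\bigl((c\tdiam d)\shuffle d\bigr),$$
as claimed. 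There is no real obstacle here; the only delicate point is the bookkeeping of the coordinatewise structure of $G_{Fliess}=\K\langle\langle x_0,\ldots,x_n\rangle\rangle^n$ and the vanishing $d'_j=0$ for $j\neq i$.
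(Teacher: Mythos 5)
Your proof is correct and takes essentially the same route as the paper's own (one-line) argument: the paper derives both identities from the left $\K^n$-linearity of $\tdiam$ together with $\epsilon_i\epsilon_j=\delta_{i,j}\epsilon_i$, which is exactly your observation that the $j$-th component of $\epsilon_i d$ is $\delta_{i,j}\,d$ while $d'_0=0$. The additional bookkeeping you supply (stability of $G_i$ under $\tdiam$, commutativity and componentwise nature of $\shuffle$) is just an expansion of that same computation.
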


\begin{proof}  Comes from the left $\K^n$-linearity and that $\epsilon_i\epsilon_j=\delta_{i,j}\epsilon_i$. \end{proof}

\begin{theo}\begin{enumerate}
\item For all $1\leq i \leq n$, $G_i$ is a subgroup of $G$. Moreover, it is isomorphic to the group of characters associated to $(V,f_i)$,
with $V=Vect(x_0,\ldots,x_n)$ and:
$$f_i:\left\{\begin{array}{rcl}
V&\longrightarrow&V\\
x_j&\longrightarrow&\delta_{i,j}x_0.
\end{array}\right.$$
\item The group $G$ is the direct product of the subgroups $G_i$'s, $1\leq i \leq n$.
\end{enumerate}\end{theo}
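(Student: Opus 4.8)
The plan is to prove the two statements in turn, the first by exhibiting an explicit isomorphism with the character group of $(V,f_i)$ described in theorem \ref{16}, and the second by a direct product argument. For point 1, I would first check that $G_i=\epsilon_i\K\langle\langle x_0,\ldots,x_n\rangle\rangle^n$ is closed under $\diamond$ and contains the neutral element $0$; this is immediate from lemma \ref{39}, since $(\epsilon_i\emptyset)\tdiam d=\epsilon_i\emptyset$ shows the action keeps us in $G_i$, and the recursion $(\epsilon_ix_jc)\tdiam d=\epsilon_ix_j(c\tdiam d)+\epsilon_i\delta_{i,j}x_0((c\tdiam d)\shuffle d)$ does too. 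Then I would set up the identification: an element $\epsilon_i c\in G_i$ with $c=\sum_w c_w w\in\K\langle\langle x_0,\ldots,x_n\rangle\rangle$ corresponds to the element $c\in\overline{T}(V)=\prod_{n\geq 0}V^{\otimes n}$, where $V=Vect(x_0,\ldots,x_n)$. The key point is that under this identification the recursion defining $\tdiam$ on $G_i$ matches exactly the recursion defining $\tdiam$ on $\overline{T}(V)$ associated to the endomorphism $f_i$ from theorem \ref{16}.

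The matching is where the main work lies, though it is essentially bookkeeping. Recall from theorem \ref{16} that for $(V,f_i)$ the composition is $\emptyset\tdiam v=\emptyset$ and $xu\tdiam v=\sum_{k\geq 0}f_i^{(k)}(x)((u\tdiam v)\shuffle v^{\shuffle k})$. Since $f_i(x_j)=\delta_{i,j}x_0$ and $f_i(x_0)=0$, we have $f_i^{(0)}(x_j)=x_j$, $f_i^{(1)}(x_j)=\delta_{i,j}x_0$, and $f_i^{(k)}(x_j)=0$ for $k\geq 2$; hence the sum collapses to $x_j(u\tdiam v)+\delta_{i,j}x_0((u\tdiam v)\shuffle v)$. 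This is precisely the formula of lemma \ref{39} once we strip the common factor $\epsilon_i$. So I would argue by induction on word length that the bijection $\epsilon_ic\mapsto c$ intertwines the two compositions, using lemma \ref{39} on one side and theorem \ref{16} on the other; the base case is the empty word and the inductive step is the displayed recursion. Since theorem \ref{16} already establishes that $\tdiam$ on $\overline{T}(V)$ makes it a group under $u\diamond v=u\tdiam v+v$, transporting this structure shows $G_i$ is a subgroup of $G$ isomorphic to the character group of $(V,f_i)$.

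For point 2, I would show $G$ is the internal direct product of the $G_i$. First, as a set $G_{Fliess}=\K^n\langle\langle x_0,\ldots,x_n\rangle\rangle=\bigoplus_{i=1}^n\epsilon_i\K\langle\langle x_0,\ldots,x_n\rangle\rangle$ since the $\epsilon_i$ form the canonical basis of $\K^n$ and $\epsilon_i\epsilon_j=\delta_{i,j}\epsilon_i$; so every $c\in G$ decomposes uniquely as $c=\sum_{i=1}^n\epsilon_i c^{(i)}$ with $\epsilon_i c^{(i)}\in G_i$. The point to verify is that this additive decomposition is compatible with $\diamond$, i.e. that the $i$-th component of $c\diamond d$ depends only on the $i$-th components of $c$ and $d$, and that the components commute. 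Both follow from the left $\K^n$-linearity of $\tdiam$ together with $\epsilon_i\epsilon_j=\delta_{i,j}\epsilon_i$: expanding $c\tdiam d$ via the recursion $x_kc\tdiam d=x_k(c\tdiam d)+x_0(d_k\shuffle(c\tdiam d))$ and tracking the $\epsilon_i$-coefficients, one sees that $\epsilon_i(c\tdiam d)=(\epsilon_ic)\tdiam(\epsilon_id)$ in $G_i$, because only $d_i$ (the $i$-th component of $d$) enters the $\epsilon_i$-part. Hence $c\diamond d=\sum_i\epsilon_i(c^{(i)}\diamond d^{(i)})$, which is exactly the statement that $G=G_1\times\cdots\times G_n$. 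The main obstacle is keeping the notation straight between the $\K^n$-valued series picture and the $\overline{T}(V)$ picture, but no genuinely hard step is involved.
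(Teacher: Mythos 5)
Your proof of point 1 is essentially the paper's: identify $G_i$ with $\K\langle\langle x_0,\ldots,x_n\rangle\rangle$ via multiplication by $\epsilon_i$, observe that $f_i^2=0$ collapses the sum in theorem \ref{16} to $x_j(u\tdiam v)+\delta_{i,j}x_0((u\tdiam v)\shuffle v)$, and match this against lemma \ref{39}. Nothing to add there.

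For point 2 you take a genuinely different (and riskier) route. The paper only proves the \emph{pairwise} identity $(\epsilon_iu)\tdiam(\epsilon_jv)=\epsilon_iu$ for $i\neq j$ --- first for words $u$ by induction on the length, using $\epsilon_i\epsilon_j=0$ to kill the $x_0(\cdots)$ term at each step, then for all series by linearity and continuity of $\tdiam$ --- and concludes $(\epsilon_iu)\diamond(\epsilon_jv)=\epsilon_iu+\epsilon_jv$, which together with the unique additive decomposition is exactly the internal-direct-product criterion. You instead assert the stronger ``diagonality'' statement $\epsilon_i(c\tdiam d)=(\epsilon_ic)\tdiam(\epsilon_id)$ for a \emph{general} right operand $d$, justified by ``tracking $\epsilon_i$-coefficients'' in the recursion. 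This is the soft spot: $\tdiam$ is not additive in its right argument, so lemma \ref{39} (whose right operand is concentrated on a single component) and left $\K^n$-linearity do not by themselves yield the statement for $d=\sum_j\epsilon_jd^{(j)}$; moreover, in the recursion $x_kc\tdiam d=x_k(c\tdiam d)+x_0(d_k\shuffle(c\tdiam d))$ the component $d_k$ that appears is indexed by the \emph{letter} $x_k$, not by the $\epsilon_i$-component of $c$, so ``only $d_i$ enters the $\epsilon_i$-part'' is precisely the nontrivial point, not something read off from the formula. You need to actually run the induction on word length (and the continuity argument, which you omit for point 2) with the $\epsilon_i\epsilon_j=\delta_{i,j}\epsilon_i$ cancellation made explicit at each step --- or, more economically, prove only the pairwise identity as the paper does and invoke the standard criterion for an internal direct product.
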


\begin{proof} 1. As a set, we identify $G_i$ with $\K\langle\langle x_0,\ldots,x_n\rangle\rangle$, via the multiplication by $\epsilon_i$. 
In this group, If $u,v \in \K\langle\langle x_0,\ldots,x_n\rangle\rangle$, by lemma \ref{39}:
\begin{align*}
\emptyset \tdiam v&=\emptyset,&
 x_ju \tdiam v&=x_j(u \tdiam v)+\delta_{i,j}x_0((u\tdiam v)\shuffle v).
\end{align*}
We consider the character group associated to $f_i$. As $f_i^2=0$, theorem \ref{16}-2 becomes:
$$\epsilon_i x_ju\tdiam v=x_j(u \tdiam v)+\delta_{i,j}x_0((u\tdiam v)\shuffle v).$$
So $G_i$ is isomorphic to this group of characters, via the multiplication by $\epsilon_i$. \\

2. Let $1\leq i,j \leq n$, with $i\neq j$. Then, by lemma \ref{39}, if $u,v \in \K\langle\langle x_0,\ldots,x_n\rangle\rangle$:
\begin{align*}
(\epsilon_i\emptyset )\tdiam (\epsilon_jv)&=\epsilon_i \emptyset, \\ \\
(\epsilon_i x_ku)\tdiam (\epsilon_jv)&=\epsilon_ix_k(u \tdiam (\epsilon_jv))+\epsilon_i\delta_{i,k}x_0((u\tdiam (\epsilon_jv))
\shuffle (\epsilon_jv))\\
&=\epsilon_ix_k(u \tdiam (\epsilon_jv))+\epsilon_i\epsilon_j\delta_{i,k}x_0((u\tdiam (\epsilon_jv))\shuffle v)\\
&=x_k(\epsilon_iu \tdiam (\epsilon_jv))+0.
\end{align*}
An easy induction on the length then proves that for all word $u$, for all $v\in\K\langle\langle x_0,\ldots,x_n\rangle\rangle$, 
$(\epsilon_iu)\tdiam (\epsilon_jv)=\epsilon_i u$. By continuity of $\tdiam$, for all $u,v \in \K\langle\langle x_0,\ldots,x_n\rangle\rangle$,
$(\epsilon_iu)\tdiam (\epsilon_jv)=\epsilon_i u$. Hence, for all $u,v \in \K\langle\langle x_0,\ldots,x_n\rangle\rangle$:
$$(\epsilon_iu)\diamond (\epsilon_jv)=\epsilon_i u+\epsilon_jv.$$
As a conclusion, $G$ is the direct sum of the subgroups $G_i$'s. \end{proof}\\

The Lie algebra of $G_i$ is the pre-Lie algebra $T(V,f_i)=\K\langle x_0,\ldots,x_n\rangle$ with the composition associated to $f_i$. 
Let us give a few results on this pre-Lie algebra:

\begin{theo}\label{41}
$T(V,f_i)$ inherits a gradation such that $x_i$ is homogeneous of degree $1$ for all $1\leq i \leq n$ and $x_0$ is homogeneous of degree $0$.
The formal series of this gradation is:
$$F_{T(V,f_i)}(X)=\frac{X}{1-nX-X^2}=\sum_{k\geq 0}
\frac{1}{\sqrt{n^2+4}}\left(\left(\frac{n+\sqrt{n^2+4}}{2}\right)^k-\left(\frac{n-\sqrt{n^2+4}}{2}\right)^k\right)X^k.$$
\end{theo}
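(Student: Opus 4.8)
The plan is to obtain everything from Proposition~\ref{11} and Corollary~\ref{12}. First I would equip $V=Vect(x_0,\ldots,x_n)$ with the gradation in which $x_1,\ldots,x_n$ are homogeneous of degree $1$ and $x_0$ is homogeneous of degree $2$; thus $V_0=(0)$, $dim(V_1)=n$, $dim(V_2)=1$ and $V_m=(0)$ for $m\geq 3$, so that $F_V(X)=nX+X^2$. The only point to check is that $f_i$ is homogeneous of degree $N=1$ for this gradation: indeed $f_i(x_i)=x_0\in V_{1+1}$, while $f_i$ vanishes on $x_0$ and on every $x_j$ with $j\neq i$, hence $f_i(V_m)\subseteq V_{m+1}$ for all $m\geq 0$. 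Proposition~\ref{11} then provides the claimed gradation $(T(V,f_i)_n)_{n\geq 0}$ of the pre-Lie algebra $T(V,f_i)$.

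Next I would apply Corollary~\ref{12}(2), whose hypotheses hold since every $V_i$ is finite-dimensional, $V_0=(0)$ and $N=1\geq 1$: it yields that each $T(V,f_i)_n$ is finite-dimensional, that $T(V,f_i)_0=(0)$, and that
$$F_{T(V,f_i)}(X)=\sum_{n\geq 0}dim(T(V,f_i)_n)X^n=\frac{X^N}{1-F_V(X)}=\frac{X}{1-nX-X^2},$$
which is the first of the two asserted identities.

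It remains to expand this rational function. Writing $\frac{X}{1-nX-X^2}=\sum_{k\geq 0}a_kX^k$ and clearing the denominator gives $a_0=0$, $a_1=1$ and $a_k=na_{k-1}+a_{k-2}$ for $k\geq 2$; equivalently one factors $1-nX-X^2=-(X-s_+)(X-s_-)$ with $s_\pm=\frac{-n\pm\sqrt{n^2+4}}{2}$, notes that $1/s_\pm=\frac{n\pm\sqrt{n^2+4}}{2}$, and reads off the coefficients from the partial fraction decomposition. Either route gives
$$a_k=\frac{1}{\sqrt{n^2+4}}\left(\left(\frac{n+\sqrt{n^2+4}}{2}\right)^k-\left(\frac{n-\sqrt{n^2+4}}{2}\right)^k\right),$$
which is the second asserted identity, finishing the proof.

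I do not expect a genuine obstacle: the argument is a direct application of Proposition~\ref{11} and Corollary~\ref{12} followed by an elementary computation with power series. The only delicate point is the bookkeeping of the shift $N$: one must place $x_0$ in degree $2$ (rather than degree $1$), since this is exactly what makes $f_i$ homogeneous while keeping $F_V$ without constant term and equal to $nX+X^2$; this choice is what produces the Fibonacci-type denominator $1-nX-X^2$ together with the factor $X=X^N$ in the numerator.
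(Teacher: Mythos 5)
Your proof is correct and follows exactly the paper's own argument: the paper likewise sets $V_1=Vect(x_1,\ldots,x_n)$, $V_2=Vect(x_0)$, observes that $f_i$ is then homogeneous of degree $N=1$, and invokes Corollary \ref{12} to get $F_{T(V,f_i)}(X)=X/(1-nX-X^2)$; your extra step of expanding the rational function into the Binet-type closed form is routine and correct. Note only that your choice of placing $x_0$ in degree $2$ (which is indeed what the paper does, and what the formal series requires) does not literally match the degrees announced in the theorem's statement, but that is an inconsistency in the statement itself, not in your argument.
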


\begin{proof}  We put $V_1=Vect(x_1,\ldots,x_n)$ and $V_2=Vect(x_0)$. It defines a gradation of $V$, such that $f_i$ 
is homogeneous of degree $1$. We apply corollary \ref{12}. \end{proof} \\

{\bf Examples.} 
\begin{align*}
dim(T(V,f_i)_0)&=0,\\
dim(T(V,f_i)_1)&=1,\\
dim(T(V,f_i)_2)&=n,\\
dim(T(V,f_i)_3)&=n^2+1,\\
dim(T(V,f_i)_4)&=n(n^2+2),\\
dim(T(V,f_i)_5)&=n^4+3n^2+1,\\
dim(T(V,f_i)_7)&=n^6+5n^4+6n^2+1,\\
dim(T(V,f_i)_8)&=n(n^2+2)(n^4+4n^2+2),\\
dim(T(V,f_i)_9)&=(n^2+1)(n^6+6n^4+9n^2+1),\\
dim(T(V,f_i)_{10})&=n(n^4+3n^2+1)(n^4+5n^2+5).
\end{align*}
These are the Fibonacci polynomials, see sequence A011973 in \cite{Sloane}. Here are several specializations of these sequences, 
which can be found in  \cite{Sloane}:
$$\begin{array}{|c|c||c|c||c|c|}
\hline n&ref.&n&ref.&n&ref.\\
\hline 1&A000045&2&A000129&3&A006190\\
\hline 4&A001076&5&A052918&6&A005668\\
\hline 7&A054413&8&A041025&9&A099371\\
\hline 10&A041041&11&A049666&12&A041061\\
\hline 13&A140455&14&A041085&16&A041113\\
\hline 18&A041145&20&A041181&22&A041221\\
\hline \end{array}$$
These sequences are generically called generalized Fibonacci sequences. If $n=1$, this is the Fibonacci sequence; if $n=2$, this is the Pell sequence. \\

{\bf Remarks.} By corollary \ref{24}, $T(V,f_i)$ is generated, as a pre-Lie algebra, by $\K\langle x_1,\ldots,x_n\rangle$. 
Moreover, by corollary \ref{34}, $T_+(V,f_i)$ is generated, as a Com-Pre-Lie algebra, by $Vect(x_0,\ldots,x_n)$ if, and only if, $n=1$.
We recover in this way a result of \cite{Foissyprelie}.

\section{Appendix}

\subsection{Admissible words and Dyck paths}

Recall that a Dyck path of length $2n$ is a path from $(0,0)$ to $(n,n)$, made of steps $\rightarrow$ and $\uparrow$, 
always staying under the diagonal, but possibly touching it in other points that $(0,0)$ and $(n,n)$. The set of Dyck paths of length $2n$ 
is denoted by $D(n)$.\\

{\bf Examples.}
\begin{eqnarray*}
D(0)&=&\{.\},\\
D(1)&=&\{\rightarrow\uparrow\},\\
D(2)&=&\{\rightarrow\uparrow\rightarrow\uparrow, \rightarrow\rightarrow\uparrow\uparrow\},\\
D(3)&=&\{\rightarrow\uparrow\rightarrow\uparrow\rightarrow\uparrow, \rightarrow\rightarrow\uparrow\uparrow\rightarrow\uparrow,
\rightarrow\uparrow\rightarrow\rightarrow\uparrow\uparrow,\rightarrow\rightarrow\uparrow\rightarrow\uparrow\uparrow,
\rightarrow\rightarrow\rightarrow\uparrow\uparrow\uparrow\}.
\end{eqnarray*}
Note that a nonempty Dyck path always starts by $\rightarrow$ and ends by $\uparrow$.\\

\begin{lemma}\label{42}
Let $\rightarrow \uparrow^{a_1}\rightarrow \uparrow^{a_2}\ldots \rightarrow \uparrow{a_n}$ be a path starting from $(0,0)$, 
with $a_1,\ldots,a_n \geq 0$.
\begin{enumerate}
\item It is a path from $(0,0)$ to $(n,n)$ if, and only if, $a_1+\ldots+a_n=n$.
\item This path is under the diagonal if, and only if, for all $1\leq i\leq n$, $a_1+\ldots+a_i\leq i$.
\end{enumerate}\end{lemma}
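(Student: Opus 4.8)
The plan is to introduce the partial sums $S_0=0$ and $S_j=a_1+\cdots+a_j$ for $1\le j\le n$, which form a nondecreasing sequence because the $a_i$ are nonnegative, and then to describe precisely the lattice points visited by the path $P=\rightarrow\uparrow^{a_1}\rightarrow\uparrow^{a_2}\cdots\rightarrow\uparrow^{a_n}$. First I would observe that after the initial segment $\rightarrow\uparrow^{a_1}\cdots\rightarrow\uparrow^{a_j}$ the path has performed $j$ horizontal steps and $S_j$ vertical steps, hence sits at the point $(j,S_j)$. From this it follows that, for each $1\le j\le n$, the points of $P$ with abscissa $j$ are exactly the points $(j,y)$ with $S_{j-1}\le y\le S_j$ (the path enters the line $x=j$ at height $S_{j-1}$, climbs to height $S_j$, then leaves by a horizontal step), while the only point of $P$ with abscissa $0$ is the starting point $(0,0)$.

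For the first assertion I would specialize the computation above to $j=n$: the path $P$ ends at $(n,S_n)$, so it is a path from $(0,0)$ to $(n,n)$ if and only if $S_n=n$, i.e. $a_1+\cdots+a_n=n$. For the second assertion I would rephrase ``$P$ stays under the diagonal (possibly touching it)'' as ``$y\le x$ for every point $(x,y)$ of $P$''. Using the description of the visited points, this is equivalent to requiring, for every $1\le j\le n$, that $y\le j$ for all $S_{j-1}\le y\le S_j$; since $S_{j-1}\le S_j$, the binding case is the top of the $j$-th vertical run, so the condition reduces to $S_j\le j$ for all $j$, that is, $a_1+\cdots+a_i\le i$ for all $1\le i\le n$.

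I do not expect any genuine difficulty here: the argument is essentially bookkeeping of step counts. The one point deserving a moment's care is the claim that the largest ordinate reached at abscissa $j$ equals $S_j$, so that it suffices to test the inequality $y\le x$ at the tops of the vertical runs; this is exactly where the monotonicity of the sequence $(S_j)$ is used, and it is also what makes the ``only if'' direction immediate, since $(j,S_j)$ is then an actual point of $P$.
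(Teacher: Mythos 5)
Your proof is correct and follows essentially the same step-counting argument as the paper, which simply notes that the path ends at $(n,a_1+\ldots+a_n)$ and declares the second part immediate; your explicit description of the visited lattice points via the partial sums $S_j$ just fills in that "immediate" step.
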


\begin{proof} 1. The path contains $n\rightarrow$ and $a_1+\ldots+a_n \uparrow$, so is a path from $(0,0)$ to $(n,a_1+\ldots+a_n)$.

2. Immediate. \end{proof}\\

{\bf Notation.} For all $n\geq 1$, we denote by $\W(n)$ the set of first lines of admissible biwords of length $n$, that is to say words $a_1\ldots a_n$
with letters in $\mathbb{N}$, such that:
\begin{itemize}
\item For all $1\leq i\leq n$, $a_i+\ldots +a_n\leq n-i$.
\item $a_1+\ldots+a_n=n-1$.
\end{itemize}
We denote by $\W_\sigma(n)$ the set of first lines of $\sigma$-admissible biwords of length $n$, that is to say words $w_1\ldots w_k$ of length $k$,
such that $w_1,\ldots w_k$ are admissible words.

\begin{prop}
For all $n \geq 0$, the following map is a bijection:
$$\theta_n:\begin{cases}
\W(n+1)&\longrightarrow D(n)\\
a_1\ldots a_{n+1}&\longrightarrow \rightarrow\uparrow^{a_n}\ldots \rightarrow \uparrow^{a_1}.
\end{cases}$$
\end{prop}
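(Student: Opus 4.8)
The plan is to prove the three statements --- that $\theta_n$ is well-defined (lands in $D(n)$), injective, and surjective --- by a direct bookkeeping argument, the only conceptual ingredient being that the reversal of index order built into $\theta_n$ turns the suffix-sum inequalities defining admissibility into the prefix-sum inequalities characterising Dyck paths, so that Lemma \ref{42} applies almost verbatim.

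First I would record the elementary reduction (the Remark following the definition of admissible words): any $a_1\ldots a_{n+1}\in\W(n+1)$ satisfies $a_{n+1}\le(n+1)-(n+1)=0$, hence $a_{n+1}=0$, so the image path $\rightarrow\uparrow^{a_n}\cdots\rightarrow\uparrow^{a_1}$ depends only on $a_1,\ldots,a_n$ and contains $n$ horizontal steps together with $a_1+\cdots+a_n=(a_1+\cdots+a_{n+1})=n$ vertical steps. By Lemma \ref{42}(1) it therefore runs from $(0,0)$ to $(n,n)$. Writing the image as $\rightarrow\uparrow^{b_1}\cdots\rightarrow\uparrow^{b_n}$ with $b_i:=a_{n+1-i}$, Lemma \ref{42}(2) says it stays below the diagonal iff $b_1+\cdots+b_i\le i$ for all $1\le i\le n$; substituting $j=n+1-i$ this reads $a_j+\cdots+a_n\le n+1-j$, which is precisely the family of inequalities defining $\W(n+1)$ (the index $j=n+1$ giving only the vacuous $a_{n+1}\le 0$). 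So $\theta_n$ indeed takes values in $D(n)$.

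For injectivity I would use that from a path $P=\rightarrow\uparrow^{b_1}\cdots\rightarrow\uparrow^{b_n}$ the exponents $b_i$ are recovered as the lengths of the maximal runs of $\uparrow$ immediately following the $i$-th $\rightarrow$; hence $a_j=b_{n+1-j}$ (for $1\le j\le n$) and $a_{n+1}=0$ are determined, so $\theta_n$ is injective. For surjectivity I would first note that every nonempty Dyck path begins with $\rightarrow$, so any $P\in D(n)$ can be written in the form $\rightarrow\uparrow^{b_1}\cdots\rightarrow\uparrow^{b_n}$ with $b_i\ge 0$; then I set $a_j:=b_{n+1-j}$ for $1\le j\le n$ and $a_{n+1}:=0$, and check $a_1\ldots a_{n+1}\in\W(n+1)$: indeed $a_1+\cdots+a_{n+1}=\sum_i b_i=n=(n+1)-1$ by Lemma \ref{42}(1), and running the substitution of the previous paragraph backwards turns the Dyck inequalities $b_1+\cdots+b_i\le i$ into $a_i+\cdots+a_{n+1}\le(n+1)-i$ for $1\le i\le n$, the case $i=n+1$ being automatic. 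Then $\theta_n(a_1\ldots a_{n+1})=P$.

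There is no real obstacle here --- this is a counting/bijection argument --- but the one point to get right is the index reversal $j\leftrightarrow n+1-j$ together with the off-by-one coming from the silent coordinate $a_{n+1}=0$; I would therefore make that substitution explicit rather than leave it implicit. The boundary case $n=0$, where $\W(1)=\{0\}$ and $D(0)$ is the single empty path, is then immediate.
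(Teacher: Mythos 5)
Your proposal is correct and follows essentially the same route as the paper: check $a_{n+1}=0$, apply Lemma \ref{42} for both the endpoint and the under-the-diagonal condition, and verify injectivity and surjectivity by reading the exponents off the path. The only difference is that you make the index reversal $j\leftrightarrow n+1-i$ explicit where the paper leaves it implicit, which is a harmless (and arguably helpful) elaboration.
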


\begin{proof} First, $\theta_n$ is well-defined: if $a_1\ldots a_{n+1} \in \W(n+1)$, by definition $a_{n+1}=0$.
Moreover, $a_1+\ldots+a_n=a_1+\ldots+a_{n+1}=n+1-1=n$. By lemma \ref{42}, it is a path from $(0,0)$ to $(n,n)$.
For all $1\leq i \leq n$, $a_i+\ldots+a_n=a_i+\ldots+a_{n+1}\leq n+1-i$. By lemma \ref{42}, it is under the diagonal, so it is indeed in $D(n)$.

$\theta_n$ is injective: if $\theta_n(a_1\ldots a_{n+1})=\theta_n(b_1\ldots b_{n+1})$, then immediately $a_1=b_1,\ldots,a_n=b_n$;
moreover, $a_{n+1}=b_{n+1}=0$. 

$\theta_n$ is surjective: if $P$ is a Dyck path of length $n$, as it starts by $\rightarrow$ and contains exactly $n$ $\rightarrow$ and $\uparrow$,
it can be written under the form $\rightarrow\uparrow^{a_n}\ldots \rightarrow \uparrow^{a_1}$, with $a_1+\ldots +a_n=n$. 
As it is under the diagonal, by lemma \ref{42}, for all $1\leq i \leq n$,  $a_i+\ldots+a_n\leq n+1-i$. So $a_1\ldots a_n0 \in \W(n+1)$ 
and $\theta(a_1\ldots a_n0)=P$. \end{proof}

\begin{cor} 
For all $n \geq 1$, $|\W(n)|$ is the $(n-1)$-th Catalan number $\displaystyle \frac{1}{n}\binom{2n-2}{n-1}$.
\end{cor}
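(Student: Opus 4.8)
The statement is an immediate corollary of the preceding proposition, so the plan is simply to combine that bijection with the standard enumeration of Dyck paths. Concretely, I would apply the proposition with $n$ replaced by $n-1$: it provides a bijection
$$\theta_{n-1}:\W(n)\longrightarrow D(n-1),\qquad a_1\ldots a_n\longmapsto \rightarrow\uparrow^{a_{n-1}}\ldots\rightarrow\uparrow^{a_1}.$$
Hence $|\W(n)|=|D(n-1)|$ for all $n\geq 1$.

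It then remains to recall that the number of Dyck paths of length $2m$ is the $m$-th Catalan number $\frac{1}{m+1}\binom{2m}{m}$; this is classical (it can be proved, for instance, by the reflection principle or the cycle lemma, counting all $\binom{2m}{m}$ lattice paths from $(0,0)$ to $(m,m)$ and subtracting those crossing the diagonal). Taking $m=n-1$ gives
$$|\W(n)|=|D(n-1)|=\frac{1}{n}\binom{2n-2}{n-1},$$
which is the $(n-1)$-th Catalan number, as claimed.

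There is essentially no obstacle here: the only nontrivial input is the bijection $\theta_{n-1}$, already established, together with the well-known count of Dyck paths. One could, if desired, make the argument fully self-contained by including a one-line justification of the Catalan count, but since Dyck paths are introduced precisely for this purpose and their enumeration is standard, a citation or brief reminder suffices.
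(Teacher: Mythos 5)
Your argument is correct and is exactly the one the paper intends: the corollary is stated without proof precisely because it follows immediately from the bijection $\theta_{n-1}:\W(n)\to D(n-1)$ of the preceding proposition together with the classical Catalan enumeration of Dyck paths. The index shift and the identification $|D(n-1)|=\frac{1}{n}\binom{2n-2}{n-1}$ are handled correctly, so there is nothing to add.
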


\begin{cor}\begin{enumerate}
\item For all $n\geq 0$, $|\W_\sigma(n)|$ is the $n$-th Catalan number $\displaystyle \frac{1}{n+1}\binom{2n}{n}$.
\item Let $a_1\ldots a_n$ be a word with letters in $\mathbb{N}$. It is $\sigma$-admissible if, and only if, for all $1\leq i \leq n$,
$a_i+\ldots+a_n\leq n-i$.
\end{enumerate}\end{cor}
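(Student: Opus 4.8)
The two assertions are closely linked, and the plan is to deduce both from a single bijective identification. Write $\W'(n)$ for the set of words $a_1\ldots a_n$ with letters in $\mathbb{N}$ such that $a_i+\ldots+a_n\leq n-i$ for all $1\leq i\leq n$; statement~2 is exactly the equality $\W_\sigma(n)=\W'(n)$. One inclusion is already available: by lemma~\ref{30}(2), every $\sigma$-admissible word lies in $\W'(n)$, so $\W_\sigma(n)\subseteq\W'(n)$. I will prove the reverse inclusion and, separately, exhibit a bijection $\W'(n)\longleftrightarrow\W(n+1)$; together with the preceding corollary (which gives $|\W(n+1)|=\frac{1}{n+1}\binom{2n}{n}$) these yield both parts.

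For the bijection, the map $\W(n+1)\to\W'(n)$ simply erases the first letter: if $a_1\ldots a_{n+1}\in\W(n+1)$ then for $1\leq i\leq n$ one has $a_{i+1}+\ldots+a_{n+1}\leq (n+1)-(i+1)=n-i$, so $a_2\ldots a_{n+1}\in\W'(n)$. In the other direction, given $a_1\ldots a_n\in\W'(n)$ I prepend the letter $k=n-(a_1+\ldots+a_n)$; lemma~\ref{30}(3) says precisely that $k\geq 1$ and that $ka_1\ldots a_n$ is (the first line of) an admissible word of length $n+1$, hence lies in $\W(n+1)$. Since every word of $\W(n+1)$ has letter-sum exactly $n$, the letter erased from $a_1\ldots a_{n+1}\in\W(n+1)$ equals $n-(a_2+\ldots+a_{n+1})$, so the two maps are mutually inverse and $|\W'(n)|=|\W(n+1)|$.

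To obtain $\W'(n)\subseteq\W_\sigma(n)$ I would argue by induction on $n$, the case $n=0$ being the empty product of admissible words. Given $a_1\ldots a_n\in\W'(n)$, put $S_j=a_1+\ldots+a_j$ and $T_j=S_j-j+1$. Then $T_0=1$, each increment $T_j-T_{j-1}=a_j-1$ is an integer $\geq -1$, and $T_n=S_n-n+1=1-k\leq 0$, where $k=n-S_n\geq 1$ by the $i=1$ inequality. An integer sequence starting at $1$, ending $\leq 0$, with all increments $\geq -1$ must take the value $0$; let $j\geq 1$ be the least index with $T_j=0$, i.e. $S_j=j-1$, so that $S_{j'}\geq j'$ for $1\leq j'<j$. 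Then $a_1\ldots a_j$ is admissible: its letter-sum is $j-1$ by the choice of $j$, and for $2\leq i\leq j$ one has $a_i+\ldots+a_j=(j-1)-S_{i-1}\leq (j-1)-(i-1)=j-i$ since $i-1<j$. The tail $a_{j+1}\ldots a_n$ inherits the defining inequalities of $\W'(n-j)$ from those of $a_1\ldots a_n$, so by the induction hypothesis it is $\sigma$-admissible, and concatenating with $a_1\ldots a_j$ gives $a_1\ldots a_n\in\W_\sigma(n)$. This establishes $\W_\sigma(n)=\W'(n)$ (statement~2), and statement~1 follows since $|\W_\sigma(n)|=|\W'(n)|=|\W(n+1)|$ is the $n$-th Catalan number.

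The only genuinely delicate point is locating the admissible prefix in the last step: one must be sure that an index $j$ with $S_j=j-1$ exists and is at most $n$, that the initial segment it cuts off really is admissible, and that the complementary segment still obeys the defining inequalities. All of this is elementary partial-sum bookkeeping — equivalently, it can be read off from the Dyck-path encoding $\theta_{n-1}$ — and I expect no further obstacle.
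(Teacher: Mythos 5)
Your proof is correct, but it runs in the opposite logical direction from the paper's and uses different tools. The paper proves part~1 \emph{first}, by a generating-function computation: the series of admissible words is $f(X)=XC(X)$ with $C$ the Catalan series, the relation $f^2-f+X=0$ gives $\tfrac{1}{1-f(X)}=\tfrac{f(X)}{X}=C(X)$, and the unique decomposition of lemma~\ref{30} identifies $\tfrac{1}{1-f(X)}$ as the series of $\sigma$-admissible words. Part~2 is then deduced by a cardinality argument: the paper only checks that prepending $n-(a_1+\ldots+a_n)$ is an \emph{injection} from the set $\W'_\sigma(n)$ of words satisfying the inequalities into $\W(n+1)$, and concludes $\W_\sigma(n)=\W'_\sigma(n)$ because the two finite sets are squeezed between equal cardinalities. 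You instead prove part~2 first and entirely constructively: your ``first return to the diagonal'' argument (the least $j$ with $S_j=j-1$) exhibits the leading admissible block explicitly and reduces the tail by induction, which is the standard first-return decomposition of Dyck paths transported through $\theta_n$; and you upgrade the paper's injection to a genuine bijection $\W'_\sigma(n)\leftrightarrow\W(n+1)$ by checking that erasing the first letter is a two-sided inverse. Part~1 then falls out of the preceding corollary with no generating functions at all. What your route buys is self-containedness and an explicit decomposition algorithm (no appeal to finiteness or to a prior count of $\W_\sigma(n)$); what the paper's route buys is brevity, since the identity $\tfrac{1}{1-f}=\tfrac{f}{X}$ does the combinatorial work of your induction in one line. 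All the delicate points you flag (existence of the index $j$, admissibility of the prefix, stability of the inequalities for the tail) do check out as you describe.
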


\begin{proof} 1. Let $C(X)$ be the formal series of Catalan numbers:
$$C(X)=\sum_{k=0}^\infty \frac{1}{n+1}\binom{2n}{n} X^n=\frac{1-\sqrt{1-4X}}{2X}.$$
Then the formal series of admissible words is:
$$f(X)=\sum_{n\geq 1} |\W(n)|X^n=XC(X)=\frac{1-\sqrt{1-4X}}{2},$$
so $f^2(X)-f(X)+X=0$ or, equivalently:
$$\frac{1}{1-f(X)}=\frac{f(X)}{X}.$$
By the unicity of decomposition of $\sigma$-admissible words in lemma \ref{30}, the formal series of $\sigma$-admissible words is:
$$g(X)=\sum_{n \geq 0} |\W_\sigma(n)|X^n=\frac{1}{1-f(X)}=\frac{f(X)}{X}=C(X),$$
so $|\W_\sigma(n)|$ is the $n$-th Catalan number. \\

2. We denote by $\W'_\sigma(n)$ the set of words $a_1\ldots a_n$ such that for all $i$, $a_i+\ldots+a_n \leq n-i$.
 By lemma \ref{30}, $\W_\sigma(n)\subseteq \W'_\sigma(n)$. By lemma \ref{30}, the following map is well-defined:
$$\alpha_n:\begin{cases}
\W'_\sigma(n)&\longrightarrow \W(n+1)\\
a_1\ldots a_n&\longrightarrow  (n-a_1-\ldots-a_n)a_1\ldots a_n.
\end{cases}$$
It is clearly injective, so $|\W'_\sigma(n)|\leq |\W(n+1)|=|\W_\sigma(n)|$. So $\W_\sigma( n)=\W_\sigma'(n)$.  \end{proof}

\subsection{Image of the morphism $\Phi_{CPL}$}

\begin{prop}\label{46}
$\g_\D=\Phi_{PL}(Prelie(\D))$ is included in the space generated by admissible words and $\Phi_{CPL}(ComPrelie(\D))$ 
is included in the space generated by $\sigma$-admissible words.
\end{prop}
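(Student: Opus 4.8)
The plan is to prove both inclusions simultaneously by structural induction on partitioned trees, using the explicit formula for $\Phi_{CPL}$ given in proposition \ref{28} together with the characterization of (resp. $\sigma$-)admissible upper words. Recall from proposition \ref{28} that for a partitioned tree $t$ decorated by $\D$,
$$\Phi_{CPL}(t)=\sum_{\sigma \in L(t)}\left(\begin{array}{ccc}
fert(\sigma(1))&\ldots&fert(\sigma(|t|))\\
d(\sigma(1))&\ldots&d(\sigma(|t|))
\end{array}\right),$$
so it suffices to show: for every linear extension $\sigma$ of a partitioned tree $t$, the upper word $a_\sigma=(fert(\sigma(1)),\ldots,fert(\sigma(|t|)))$ is $\sigma$-admissible, and moreover it is admissible when $t$ is a \emph{tree} (single root block). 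Since $ComPrelie(\D)$ is spanned by the partitioned trees and $Prelie(\D)$ is spanned by the rooted trees (blocks of cardinality one, which are in particular trees in the sense just used), this gives both claims.

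The induction is on $n=|t|$, with $n\leq 1$ trivial. For the inductive step I would split exactly as in the proof of proposition \ref{28}. \emph{First case: $t$ has several root blocks (or several roots in its root block), so $t=t_1\shuffle t_2$ with $|t_1|=k$, $|t_2|=l$ both $<n$.} Every linear extension $\sigma$ of $t$ factors through a $(k,l)$-shuffle applied to a pair $(\sigma_1,\sigma_2)\in L(t_1)\times L(t_2)$, hence $a_\sigma$ is a shuffle of $a_{\sigma_1}$ and $a_{\sigma_2}$, which by induction are $\sigma$-admissible; a concatenation-type rearrangement of two $\sigma$-admissible words obtained by shuffling — here I must check that interleaving admissible blocks still satisfies the suffix inequality $a_i+\ldots+a_n\le n-i$, which follows from the corresponding inequalities for the two pieces and the fact that the total fertility in a shuffle is the sum of the two totals — shows $a_\sigma\in\W_\sigma(n)$. (Note that when $t$ has several roots it is not a tree, so we only need $\sigma$-admissibility here.) \emph{Second case: $t$ has a single root block.} If that block has $\ge 2$ roots then again $t=t_1\shuffle t_2$ and we are in the previous case; otherwise $t$ is a tree with a genuine single root of fertility $k$, and deleting the root gives $t'=t_1\shuffle\cdots\shuffle t_k$ with each $t_i$ a single-rooted partitioned tree. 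By the first case $\Phi_{CPL}(t')$ is supported on $\sigma$-admissible words, so for each linear extension $\tau$ of $t'$ the word $a_\tau$ satisfies $a_i+\ldots+a_{n-1}\le (n-1)-i$ for all $i$ and has total sum $(n-1)-k$ — wait, more precisely by lemma \ref{30}(2) we get the suffix inequalities, and by lemma \ref{30}(1) applied to its decomposition into $k$ admissible blocks the total is $(n-1)-k$. A linear extension of $t$ is $(1)\otimes\tau$ and has upper word $(k,a_\tau)$. Then lemma \ref{30}(3), with the integer $k=(n-1)-\big((n-1)-k\big)$ matching exactly, says precisely that $(k,a_\tau)$ is admissible. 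This closes the induction.

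The main obstacle is the bookkeeping in the \emph{first case}: showing that a shuffle of two $\sigma$-admissible words is $\sigma$-admissible. The cleanest route is not to reason about which block a shuffled letter came from, but to verify the numerical criterion directly: using the corollary after proposition \ref{46}'s setup (the characterization $\W_\sigma(n)=\W'_\sigma(n)$, i.e. $w$ is $\sigma$-admissible iff $a_i+\ldots+a_n\le n-i$ for all $i$), it suffices to check the single family of suffix inequalities for the interleaved word. If $u\in\W_\sigma(k)$ and $v\in\W_\sigma(l)$ and $w$ is any shuffle of them of length $n=k+l$, then a suffix of $w$ of length $n-i$ consists of a suffix of $u$ of some length $k-p$ and a suffix of $v$ of length $l-q$ with $(k-p)+(l-q)=n-i$, hence $p+q=i$; the sum of its letters is $\le (k-p-1)_+ + (l-q-1)_+ \le (k-p)+(l-q) = n-i$ — so the inequality holds (and here one does not even need the sharper bound). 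Strictness/exactness of the total sum is not required for $\sigma$-admissibility, which is why the first case only needs the inequality characterization. With that lemma in hand the two-case induction goes through routinely, and the tree-versus-forest refinement is delivered exactly by lemma \ref{30}(3) in the second case, as indicated above.

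I should also record the one-line remark justifying the reduction to trees in the statement: since $\g_\D=\Phi_{PL}(Prelie(\D))$ and $Prelie(\D)$ is spanned by rooted trees (partitioned trees all of whose blocks are singletons, in particular having a single-element root block, hence "trees" in the sense used in the induction), the stronger conclusion "admissible" applies to all of $\g_\D$, while the weaker conclusion "$\sigma$-admissible" applies to all of $\Phi_{CPL}(ComPrelie(\D))$ because partitioned forests — equivalently shuffle-products of partitioned trees — inherit $\sigma$-admissibility from the first case of the induction. This is precisely the content of proposition \ref{46}.
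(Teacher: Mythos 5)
Your overall strategy --- structural induction along the two cases of the proof of proposition \ref{28}, with a shuffle lemma for $\sigma$-admissible words in the several-roots case and lemma \ref{30} for prepending the root's fertility --- is a legitimate alternative to the paper's argument, which instead computes the total fertility globally (it equals the number of blocks minus one) and derives the suffix inequalities by stripping the first vertex of the linear extension and applying the induction hypothesis to the remaining ideal. Your shuffle lemma is correct, and the reduction to upper words of linear extensions via proposition \ref{28} is exactly right.

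There is, however, a genuine error in your induction statement, which then breaks the second case. You claim the upper word is \emph{admissible} whenever $t$ has a single root. This is false: admissibility forces the total of the upper letters to be exactly $|t|-1$, and by the remark following proposition \ref{28} this total equals the number of blocks of $t$ minus one, so equality holds only when all blocks are singletons, i.e.\ when $t$ is a rooted tree. For instance $\htroisun$ (one root whose two children form a single block) has upper word $100$ for every linear extension, with total $1\neq 2$: it is $\sigma$-admissible but not admissible. Accordingly, in your second case the claim that $a_\tau$ decomposes into exactly $k$ admissible factors (hence has total $(n-1)-k$) only holds when every block of $t$ is a singleton; in general the number of factors is strictly larger, the total of $(k,a_\tau)$ falls short of $n-1$, and lemma \ref{30}(3) does not apply with the integer $k$. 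The repair is to split the statement: for rooted trees (all blocks of cardinality one) each $t_i$ is again a rooted tree, each $a_{\tau_i}$ has total $n_i-1$, so the shuffle $a_\tau$ has total $(n-1)-k$ and lemma \ref{30}(3) does give admissibility of $(k,a_\tau)$; for arbitrary partitioned trees you only get $\sigma$-admissibility, and in the single-root case you must verify the suffix criterion for $(k,a_\tau)$ directly, the $i=1$ inequality following from $\mathrm{sum}(a_\tau)=\sum_i \mathrm{sum}(a_{\tau_i})\leq \sum_i(n_i-1)=(n-1)-k$, which holds because a $\sigma$-admissible word of length $n_i$ has total at most $n_i-1$ by lemma \ref{30}(1). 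With these corrections your induction closes, but as written the second case asserts a false statement and its bookkeeping does not go through.
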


\begin{proof}  Let $t$ be a partitioned tree with $n$ vertices and $k$ blocks, and let $f$ be a linear extension of $t$. We put $fert(f(i))=a_i$ 
for all $i$. It is not difficult to prove that $a_1+\ldots+a_n=k-1$. As $k\leq n$,  $a_1+\ldots+a_n\leq n-1$;
if $t$ is a rooted tree, $k=n$ and $a_1+\ldots+a_n=n-1$. Let us prove that $a_i+\ldots+a_n\leq n-i$ for all $1\leq i\leq n$ by induction on $n$.
It is obvious if $n=0$. If $n\geq 1$, we already proved the result if $i=1$. Let us assume that $i\geq 2$. We put $t'=f^{-1}(\{2,\ldots,n\})$.
As $f$ is a linear extension of $t$, $t'$ is an ideal of $t$, and $b_1\ldots b_{n-1}=a_2\ldots a_n$ appears in $\Phi_{CPL}(t')$. 
By the induction hypothesis:
$$a_i+\ldots+a_n=b_{i-1}+\ldots+b_{n-1}\leq (n-1)-(i-1)=n-i.$$
Finally, if $t$ is a partitioned tree, any biword appearing in $\Phi_{CPL}(t)$ is $\sigma$-admissible;
if $t$ is a rooted tree, any biword appearing in $\Phi_{CPL}(t)$ is admissible. \end{proof} \\

{\bf Remark.} It is not difficult to see that $\g_\D$ is strictly included in the space of admissible words and that 
$\Phi_{CPL}(ComPrelie(\D))$ is strictly included in the space of $\sigma$-admissible words.

\begin{cor}
The space of $\sigma$-admissible words, with the shuffle product $\shuffle$ and the pre-Lie product $\bullet$ of $T(V_\D,f_\D)$, is a Com-Pre-Lie algebra.
\end{cor}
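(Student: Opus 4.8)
The plan is to prove that the subspace $\W$ of $T_+(V_\D,f_\D)$ spanned by the $\sigma$-admissible biwords of positive length is stable under both $\shuffle$ and the pre-Lie product $\bullet$. Since the Com-Pre-Lie axioms already hold in $T(V_\D,f_\D)$ by Theorem \ref{2}, any subspace closed under the two products automatically inherits them, so stability is all that has to be checked. One must work inside $T_+(V_\D,f_\D)$, that is, exclude the empty word: $\binom{00}{dd}$ is $\sigma$-admissible whereas $\binom{00}{dd}\bullet \emptyset=\binom{10}{dd}+\binom{01}{dd}$ and $\binom{01}{dd}$ is not. Throughout I would use the criterion proved in the appendix: $\binom{a_1\ldots a_n}{d_1\ldots d_n}$ is $\sigma$-admissible if and only if $a_i+\cdots+a_n\leq n-i$ for all $1\leq i\leq n$; in particular any suffix of length $p\geq 1$ of the upper word of a $\sigma$-admissible biword sums to at most $p-1$, while an empty suffix sums to $0$.

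Stability under $\shuffle$. A biword appearing in $u\shuffle v$, where the upper words of $u$ and $v$ are $a=a_1\ldots a_k$ and $b=b_1\ldots b_l$, has upper word an interleaving $c$ of $a$ and $b$. As a shuffle preserves the relative order of the letters coming from $u$ and of those coming from $v$, a suffix of $c$ of length $s$ is obtained by interleaving a suffix of $a$ of some length $p$ with a suffix of $b$ of length $q=s-p$; adding the two bounds above (with the convention that an empty suffix contributes $0$) shows that this suffix of $c$ sums to at most $s-1$, which is exactly the $\sigma$-admissibility inequality for $c$. Hence $u\shuffle v\in\W$.

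Stability under $\bullet$. Here I would use the closed formula of Proposition \ref{4}. With $u,v$ as above and $k,l\geq 1$, a biword appearing in $u\bullet v$ is obtained from a $(k,l)$-shuffle $\sigma$ by applying $f_\D$ in a single position $j\in\{1,\ldots,m_k(\sigma)\}$, the contributions with $m_k(\sigma)=0$ being zero. By definition of $m_k(\sigma)$, the first $m_k(\sigma)$ positions of the shuffled word carry exactly the letters $a_1,\ldots,a_{m_k(\sigma)}$ of $u$; thus the modified position $j$ carries $a_j$, which is changed into $a_j+1$. Writing $c'$ for the resulting upper word and $c$ for the underlying interleaving of $a$ and $b$, a suffix of $c'$ starting at $i>j$ coincides with the corresponding suffix of $c$, whose sum is controlled by the previous paragraph. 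For a suffix starting at $i\leq j$, hence $i\leq m_k(\sigma)$, the positions $i,\ldots,m_k(\sigma)$ of $c$ carry $a_i,\ldots,a_{m_k(\sigma)}$ and the remaining positions carry, in some order, $a_{m_k(\sigma)+1},\ldots,a_k$ and all of $b_1,\ldots,b_l$; this suffix of $c$ therefore sums to $(a_i+\cdots+a_k)+(b_1+\cdots+b_l)\leq (k-i)+(l-1)=n-i-1$, using $\sigma$-admissibility of $a$ and of $b$, and the bound $b_1+\cdots+b_l\leq l-1$ is precisely where nonemptiness of $v$ enters. Adding the increment $+1$ at position $j$ gives that this suffix of $c'$ sums to at most $n-i$. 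So $c'$ is $\sigma$-admissible and $u\bullet v\in\W$.

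The only genuinely delicate point is the computation for $\bullet$: one has to extract from $m_k(\sigma)$ exactly which letter is being raised, and then exploit that $v$ is a nonempty $\sigma$-admissible word to produce the extra $-1$ that absorbs the $+1$. Once both stability statements are established, $(\W,\shuffle,\bullet)$ is a Com-Pre-Lie subalgebra of $T_+(V_\D,f_\D)$, which proves the corollary.
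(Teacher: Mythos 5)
Your proof is correct, and it takes a genuinely different route from the paper's. The paper argues by lifting to partitioned trees: it first shows that every $\sigma$-admissible biword occurs with nonzero coefficient in $\Phi_{CPL}(t)$ for some partitioned forest $t$, and then deduces closure of the span of $\sigma$-admissible words under $\shuffle$ and $\bullet$ from the facts that $\Phi_{CPL}$ is a Com-Pre-Lie morphism with nonnegative structure constants and that its image is spanned by $\sigma$-admissible words (proposition \ref{46}). You instead verify the suffix-sum criterion of the appendix directly on the interleavings produced by $\shuffle$ and on the closed formula of proposition \ref{4} for $\bullet$. Your argument is self-contained, avoids the ``every $\sigma$-admissible word is hit by $\Phi_{CPL}$'' step entirely, and makes visible exactly where each hypothesis enters: the $+1$ created by $f_\D$ at a position $j\leq m_k(\sigma)$ is absorbed by the slack $b_1+\cdots+b_l\leq l-1$, which is available precisely because $v$ is nonempty.

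Your handling of the empty word is moreover a genuine improvement on the paper. The appendix counts $\emptyset$ as $\sigma$-admissible ($|\W_\sigma(0)|=1$), and the paper's proof starts its induction with ``$t=1$''; but the span including $\emptyset$ is not closed under $\bullet$: already $\binom{0}{d}\bullet\emptyset=\binom{1}{d}$ is not $\sigma$-admissible, and the identity $\Phi_{CPL}(t)\bullet\Phi_{CPL}(t')=\Phi_{CPL}(t\bullet t')$ invoked in the paper fails for $t'=1$, since $\Phi_{CPL}(t)\bullet\emptyset\neq\Phi_{CPL}(t)$. So the corollary is true only for the positive-length $\sigma$-admissible words, exactly as you formulate it, and your counterexample documents why the restriction is necessary.
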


\begin{proof} Let $w$ be a $\sigma$-admissible biwords. Let us first prove that there exists a partitioned tree $t$ such that $w$ appears in
$\Phi_{CPL}(t)$. We proceed by induction on the length $n$ of $w$. If $n=0$, then we take $t=1$. Let us assume the result at all rank $<n$.
If $w=a_1\ldots a_n$ is admissible, then the second point implies that $a_2\ldots a_n$ is $\sigma$-admissible, and 
$a=1\binom{n-a_1-\ldots-a_n}{d}$ for a certain $d\in \D$. By the induction hypothesis, there exists a forest $t'$ such that $a_2\ldots a_n$ 
appears in $\Phi_{CPL}(t')$. Then $a_1\ldots a_n$ appears in $\Phi_{CPL}(\tdun{$d$}\bullet t')$. If $w=w_1\ldots w_k$, 
with $w_1,\ldots, w_k$ admissible and $k\geq 2$, then for all $i$ there exists $t_i$ such that $w_i$ appears in $\Phi_{CPL}(t_i)$. 
Hence, $w_1\ldots w_k$, being a particular shuffling of $w_1,\ldots,w_k$, appears in $\Phi_{CPL}(t_1)\shuffle \ldots \shuffle
 \Phi_{CPL}(t_k)=\Phi_{CPL}(t_1\ldots t_k)$. 

Let us consider now two $\sigma$-admissible biwords $u$ and $u'$. There exists admissible forests $t$ and $t'$ such that $u$ appears in 
$\Phi_{CPL}(t)$ and $u'$ appears in $\Phi_{CPL}(t')$. Then any word appearing in $u \shuffle u'$ or in $u\bullet u'$ appears in 
$\Phi_{CPL}(t)\shuffle \Phi_{CPL}(t')=\Phi_{CPL}(t\shuffle t')$ or in $\Phi_{CPL}(t)\bullet  
\Phi_{CPL}(t')=\Phi_{CPL}(t\bullet t')$, so is $\sigma$-admissible by proposition \ref{46}. \end{proof}

\bibliographystyle{amsplain}
\bibliography{biblio}

\end{document}